\begin{document}

\newtheorem{theorem}{Theorem}    
\newtheorem{proposition}[theorem]{Proposition}
\newtheorem{conjecture}[theorem]{Conjecture}
\def\theconjecture{\unskip}
\newtheorem{corollary}[theorem]{Corollary}
\newtheorem{lemma}[theorem]{Lemma}
\newtheorem{sublemma}[theorem]{Sublemma}
\newtheorem{observation}[theorem]{Observation}
\newtheorem{remark}[theorem]{Remark}
\newtheorem{definition}[theorem]{Definition}
\theoremstyle{definition}
\newtheorem{notation}[theorem]{Notation}
\newtheorem{question}[theorem]{Question}
\newtheorem{questions}[theorem]{Questions}
\newtheorem{example}[theorem]{Example}
\newtheorem{problem}[theorem]{Problem}
\newtheorem{exercise}[theorem]{Exercise}

\numberwithin{theorem}{section} \numberwithin{theorem}{section}
\numberwithin{equation}{section}

\def\earrow{{\mathbf e}}
\def\rarrow{{\mathbf r}}
\def\uarrow{{\mathbf u}}
\def\varrow{{\mathbf V}}
\def\tpar{T_{\rm par}}
\def\apar{A_{\rm par}}

\def\reals{{\mathbb R}}
\def\torus{{\mathbb T}}
\def\heis{{\mathbb H}}
\def\integers{{\mathbb Z}}
\def\naturals{{\mathbb N}}
\def\complex{{\mathbb C}\/}
\def\distance{\operatorname{distance}\,}
\def\support{\operatorname{support}\,}
\def\dist{\operatorname{dist}\,}
\def\Span{\operatorname{span}\,}
\def\degree{\operatorname{degree}\,}
\def\kernel{\operatorname{kernel}\,}
\def\dim{\operatorname{dim}\,}
\def\codim{\operatorname{codim}}
\def\trace{\operatorname{trace\,}}
\def\Span{\operatorname{span}\,}
\def\dimension{\operatorname{dimension}\,}
\def\codimension{\operatorname{codimension}\,}
\def\nullspace{\scriptk}
\def\kernel{\operatorname{Ker}}
\def\ZZ{ {\mathbb Z} }
\def\p{\partial}
\def\rp{{ ^{-1} }}
\def\Re{\operatorname{Re\,} }
\def\Im{\operatorname{Im\,} }
\def\ov{\overline}
\def\eps{\varepsilon}
\def\lt{L^2}
\def\diver{\operatorname{div}}
\def\curl{\operatorname{curl}}
\def\etta{\eta}
\newcommand{\norm}[1]{ \|  #1 \|}
\def\expect{\mathbb E}
\def\bull{$\bullet$\ }
\def\C{\mathbb{C}}
\def\R{\mathbb{R}}
\def\Rn{{\mathbb{R}^n}}
\def\Sn{{{S}^{n-1}}}
\def\M{\mathbb{M}}
\def\N{\mathbb{N}}
\def\Q{{\mathbb{Q}}}
\def\Z{\mathbb{Z}}
\def\F{\mathcal{F}}
\def\L{\mathcal{L}}
\def\S{\mathcal{S}}
\def\supp{\operatorname{supp}}
\def\dist{\operatorname{dist}}
\def\essi{\operatornamewithlimits{ess\,inf}}
\def\esss{\operatornamewithlimits{ess\,sup}}
\def\xone{x_1}
\def\xtwo{x_2}
\def\xq{x_2+x_1^2}
\newcommand{\abr}[1]{ \langle  #1 \rangle}

\newcommand{\Norm}[1]{ \left\|  #1 \right\| }
\newcommand{\set}[1]{ \left\{ #1 \right\} }
\def\one{\mathbf 1}
\def\whole{\mathbf V}
\newcommand{\modulo}[2]{[#1]_{#2}}

\def\scriptf{{\mathcal F}}
\def\scriptg{{\mathcal G}}
\def\scriptm{{\mathcal M}}
\def\scriptb{{\mathcal B}}
\def\scriptc{{\mathcal C}}
\def\scriptt{{\mathcal T}}
\def\scripti{{\mathcal I}}
\def\scripte{{\mathcal E}}
\def\scriptv{{\mathcal V}}
\def\scriptw{{\mathcal W}}
\def\scriptu{{\mathcal U}}
\def\scriptS{{\mathcal S}}
\def\scripta{{\mathcal A}}
\def\scriptr{{\mathcal R}}
\def\scripto{{\mathcal O}}
\def\scripth{{\mathcal H}}
\def\scriptd{{\mathcal D}}
\def\scriptl{{\mathcal L}}
\def\scriptn{{\mathcal N}}
\def\scriptp{{\mathcal P}}
\def\scriptk{{\mathcal K}}
\def\frakv{{\mathfrak V}}

%
\newtheorem*{remark0}{\indent\sc Remark}
%
\renewcommand{\proofname}{\indent\sc Proof.} 

\title[Sharp Bounds for the multilinear Hausdorff operator]
{Sharp Bounds for the multilinear Hausdorff operators on mixed radial-angular local Morrey-type spaces}
\author{Ronghui Liu$^{*}$}
\author{Qi Zhang}
\renewcommand{\thefootnote}{}
\footnotetext[1]{*Corresponding author: Ronghui Liu}
\footnotetext[1]{\textcolor{black}{2020} \textit{Mathematics Subject Classification} 26D10. 42B35. 47B38.}

%
\keywords{multilinear Hausdorff operator, mixed radial-angular local Morrey-type spaces, sharp bound.}
\thanks{Ronghui Liu, email: rhliu@nwnu.edu.cn.  Qi Zhang, email: zhangqimaths@126.com}
\thanks{College of Mathematics and Statistics, Northwest Normal University, Lanzhou, 730070, People's Republic
of China}


\maketitle
\begin{abstract}
In this paper, we establish sharp bounds for the multilinear Hausdorff operators on mixed radial-angular local Morrey-type spaces, and we also give ralated applications of these operators. Meanwhile, sharp bounds for the  multilinear Hausdorff operators on mixed radial-angular complementary local Morrey-type spaces are also derived.
\end{abstract}

\section{Introduction}
The Hausdorff operator is associated with the Hausdorff summability method, which arose in the early 20th century in connection with certain classical problems in analysis. Its theory traces its origins to Hurwitz and Silverman \cite{hurwitz1917} in 1917 and was rigorously established by Hausdorff \cite{hausdorff1921} in 1921.

The one-dimensional Hausdorff operator was defined by
$$
h_{\Phi}(f)(x) = \int_{0}^{\infty} \frac{\Phi(t)}{t}f\left(\frac{x}{t}\right)dt,
$$
where $x \in \mathbb{R}$  and $\Phi$ is a locally integrable function on $\mathbb{R}^{+}:=(0,\infty)$.

A general form of the Hausdorff operator in higher dimensions was defined by M¨®ricz in \cite{moricz2005}
\[
\mathcal{H}_{\Phi,A}(f)(x) = \int_{\mathbb{R}^{n}} \frac{\Phi(y)}{|y|^{n}} f(xA(y))dy,
\tag{1.1}\quad x \in \mathbb{R}^n,
\]
where $A(y) = \big( a_{ij}(y) \big)_{i,j=1}^{n}$ is an $n \times n$ matrix whose entries $a_{ij}(y)$ are measurable functions of $y$, and $\det(A(y)) \neq 0$ almost everywhere in the support of $\Phi$.

Now, let
$$
A(y) = \text{diag}(|y|^{-1}, \ldots, |y|^{-1}).
$$

Next, we introduce two different forms of the Hausdorff operators.
One form was introduced by Andersen in \cite{andersen2003},
$$
\mathcal{H}_{\Phi}(f)(x) = \int_{\mathbb{R}^{n}} \frac{\Phi(y)}{|y|^{n}} f\left(\frac{x}{|y|}\right) dy.
$$
The other form was introduced by Chen et al. in \cite{chen2012},
$$
\widetilde{\mathcal{H}_{\Phi}}(f)(x) = \int_{\mathbb{R}^{n}} \frac{\Phi\left(\frac{x}{|y|}\right)}{|y|^{n}} f(y) dy, \quad x \in \mathbb{R}^{n},
$$
where~$\Phi$~is a locally integrable function on $\mathbb{R}^{n}$.

In the past period of time, a particularly notable direction of research is concerned with establishing boundedness properties of Hausdorff operators (see, \cite{andersen2003, chen2012, chen2013, liflyand2013, ruan2016}, etc.).

The modern form of the Hausdorff operator and its variants have been extensively studied, for example, multivariate Hausdorff operators \cite{brown2002, moricz2005, fan2019}, fractional Hausdorff operators \cite{chen2016},  and multidimensional Hausdorff operators \cite{chen2014, lerner2007}, leading to substantial contributions to harmonic analysis.
In addition, many scholars have already established sharp bounds of general Hausdorff operators on some classical function spaces. We refer the reader to \cite{karapetyants2020, wu2015} and the references therein for further details.
Moreover, building upon the work on the multilinear Hardy operator in \cite{fu2012}, Chen et al. \cite{chen2012multilinear} introduced a multilinear extension of the Hausdorff operator and for related subsequent developments, see also \cite{fan2014, gao2015}.

In the sequel, let $m, n_{i} \in \mathbb{N}^{+}$ be positive integers and write
$$
\vec{f} = (f_{1}, f_{2}, \ldots, f_{m}) \quad \text{and} \quad \vec{u} = (u_{1}, u_{2}, \ldots, u_{m})
$$
with each $ u_{i} \in \mathbb{R}^{n_{i}}, i = 1, \ldots, m $. Set
$$
|\vec{u}| = \sqrt{|u_{1}|^{2} + |u_{2}|^{2} + \cdots + |u_{m}|^{2}} \quad \text{and} \quad d\vec{u} = du_{1}du_{2} \cdots du_{m}.
$$

Our main purpose in this paper is to investigate the following four types multilinear extensions of the Hausdorff operator.

The first operator is defined by
$$
R_{\Phi}(\vec{f})(x) = \int_{\mathbb{R}^{n_m}} \cdots \int_{\mathbb{R}^{n_2}} \int_{\mathbb{R}^{n_1}} \frac{\Phi(\vec{u})}{\prod\limits_{i=1}^{m}|u_{i}|^{n_{i}}} \prod_{i=1}^{m}f_{i}\left(\frac{x}{|u_{i}|}\right)d\vec{u}.
$$

The second operator is defined by
$$
\widetilde{R_{\Phi}}(\vec{f})(x) = \int_{\mathbb{R}^{n_m}} \cdots \int_{\mathbb{R}^{n_2}} \int_{\mathbb{R}^{n_1}} \frac{\Phi\left(\frac{x}{|u_{1}|}, \frac{x}{|u_{2}|}, \ldots, \frac{x}{|u_{m}|}\right)}{\prod\limits_{i=1}^{m}|u_{i}|^{n_{i}}} \prod_{i=1}^{m}f_{i}(u_{i})d\vec{u}.
$$

The third operator is defined by
$$
S_{\Phi}(\vec{f})(x) = \int_{\mathbb{R}^{n_m}} \cdots \int_{\mathbb{R}^{n_2}}\int_{\mathbb{R}^{n_1}} \frac{\Phi(\vec{u})}{|\vec{u}|^{\sum\limits_{i=1}^{m} n_i}} \prod_{i=1}^{m} f_i \left( \frac{x}{|u_i|} \right) d\vec{u}.
$$

The last one is defined by
$$
\widetilde{S_{\Phi}}(\vec{f})(x) = \int_{\mathbb{R}^{n_m}} \cdots \int_{\mathbb{R}^{n_2}}\int_{\mathbb{R}^{n_1}} \frac{\Phi\left(\frac{x}{|\vec{u}|}\right)}{|\vec{u}|^{\sum\limits_{i=1}^{m} n_i}} \prod_{i=1}^{m} f_i (u_i) d\vec{u}.
$$

In each of these, $x \in \mathbb{R}^{n} $ and $\Phi$ is a locally integrable function on $\mathbb{R}^{n}$.

One fundamental motivation for the study of Hausdorff operators lies in the fact that many classical operators in analysis arise as special cases of the general Hausdorff operator by making suitable choices of the function~$\Phi$. Therefore, we give some specific examples to demonstrate its wide applications in the third part of this article.

In 2004, Burenkov et al. \cite{burenkov2004} introduced the definition of the local Morrey-type space as follows.
\begin{definition}
Let $0 < p, q \leq \infty$ and $0 \leq \lambda < \infty$. Then the local Morrey-type space $LM^{\lambda}_{p,q} (\mathbb{R}^n)$ for $0 < p < \infty$ and $0 < q < \infty$ is the set of all measurable functions $f$ satisfying
$$
\|f\|_{LM^{\lambda}_{p,q} (\mathbb{R}^n)} = \left( \int_0^\infty \left( \frac{1}{r^{\lambda}} \left( \int_{B(0,r)} |f(y)|^p dy \right)^{\frac{1}{p}} \right)^q \frac{dr}{r} \right)^{\frac{1}{q}} < \infty,
$$
and for $0 < p < \infty$,
$$
\|f\|_{LM^{\lambda}_{p,\infty} (\mathbb{R}^n)} = \sup_{r>0} \frac{1}{r^{\lambda}} \left( \int_{B(0,r)} |f(y)|^p dy \right)^{\frac{1}{p}} < \infty,
$$
where we make the usual modification with~$\|f\|_{L^p(B(0,r))}$~replaced by~$\|f\|_{L^\infty(B(0,r))}$~\\when~$p = \infty$. Here and below, $B(0,r)$ denotes the ball centered at the origin with radius $r > 0$.
\end{definition}

In fact, the space $LM^{\lambda}_{p,\infty}(\mathbb{R}^n)$ coincides with the central Morrey space, as introduced in \cite{alvarez2000}. According to \cite{burenkov2004,guliyev2017}, the space $LM^{\lambda}_{p,q}(\mathbb{R}^n)$ is non-trivial. Specifically, $LM^{\lambda}_{p,q}(\mathbb{R}^n)\neq\Theta$ if and only if
\[
\lambda > 0 \text{ if } q < \infty \quad \text{and } \quad \lambda \geq 0 \text{ if } q = \infty,
\tag{1.3}
\]
where $\Theta$ denotes the set of all functions that are equivalent to zero on $\mathbb{R}^n$.

In 2007, Burenkov et al. \cite{VHV2007} defined the complementary local Morrey-type space as follows.
\begin{definition}
Let~$0 < p, q \leq \infty$~and~$\lambda \leq 0$. Then the complementary local Morrey-type space~${}^{c}LM^{\lambda}_{p,q}(\mathbb{R}^n)$~for~$0 < p < \infty$ and $0<q<\infty$ consists of all measurable functions~$f$~such that
$$
\|f\|_{{}^{c}LM^{\lambda}_{p,q}} = \left( \int_{0}^{\infty} \left( \frac{1}{r^{\lambda}} \left( \int_{{}^{c}B(0,r)} |f(y)|^p dy \right)^{\frac{1}{p}} \right)^q \frac{dr}{r} \right)^{\frac{1}{q}} < \infty,
$$
and for $0 < p < \infty$,
$$
\|f\|_{{}^{c}LM^{\lambda}_{p,\infty}} = \sup_{r>0} \frac{1}{r^{\lambda}} \left( \int_{{}^{c}B(0,r)} |f(y)|^p dy \right)^{\frac{1}{p}} < \infty,
$$
where we make the usual modification with~$\|f\|_{L^p({}^{c}B(0,r))}$~replaced by~$\|f\|_{L^{\infty}({}^{c}B(0,r))}$\\when~$p = \infty$, and ${}^{c}B(0,r)=\mathbb{R}^n \setminus B(0,r)$.
\end{definition}

In addition, the mixed radial-angular space $L^{\tilde{p}}_{\rm rad}L^{p}_{\rm ang}(\mathbb R^n)$, as a extension of classical Lebesgue space $L^{p}(\mathbb R^n)$, was defined as follows.

Let $S^{n-1}$ be the unit sphere in $\mathbb R^n$, $n\geq 2$, with the Lebesgue measure $d\sigma=d\sigma(\cdot)$,
$$
\|f\|_{L^{\tilde{p}}_{\rm rad}L^{p}_{\rm ang}(\mathbb R^n)}:=\left(\int_{0}^{\infty}\|f(r\cdot)\|^{\tilde{p}}_{L^{p}(S^{n-1})}r^{n-1}dr\right)^{\frac{1}{\tilde{p}}},\quad  0< p,
\tilde{p}\leq\infty.
$$
and when $p=\infty$ or $p'=\infty$, we just need to make the usual modifications in the above definition, that is,
$$
\|f\|_{L^{\tilde{p}}_{\rm rad}L^{\infty}_{\rm ang}(\mathbb R^n)}:=\left(\int_{0}^{\infty}\left(\operatorname*{ess\,sup}_{\theta\in\mathbb{S}^{n-1}}|f(\rho\theta)|\right)^{\tilde{p}} r^{n-1}dr\right)^{\frac{1}{\tilde{p}}},
\quad  0< \tilde{p}<\infty,\; p=\infty,
$$
and
$$
\|f\|_{L^{\infty}_{\rm rad}L^{p}_{\rm ang}(\mathbb R^n)}:=\operatorname*{ess\,sup}_{r>0}\|f(r\cdot)\|_{L^{p}(S^{n-1})},
\quad  0< p\leq\infty,\; \tilde{p}=\infty.
$$

The boundedness for some classical operators in harmonic analysis on mixed radial-angular spaces were investigated successively in \cite{CL,DL1,DL2,LF,LLW,LW,RSH2023,RYS2024,RSH2025}.

Naturally, we introduce the mixed radial-angular local Morrey-type space and the mixed radial-angular complementary local Morrey-type space, moreover, the corresponding versions with power weight are given in current work.
\begin{definition}
Let $0 < p, \tilde{p}, q \leq \infty$ and $0 \leq \lambda < \infty$. Then the mixed radial-angular local Morrey-type space $LML_{rad}^{\tilde{p},\lambda,q}L_{ang}^{p}(\mathbb{R}^n, |x|^{\alpha})$ for $0 < p, \tilde{p}, q< \infty$ is the set of all measurable functions $f$ satisfying
\begin{align*}
&\|f\|_{LML_{rad}^{\tilde{p},\lambda,q}L_{ang}^{p}(\mathbb{R}^n, |x|^{\alpha})} \\
&= \left(\int_0^\infty\left(\frac{1}{r^\lambda}\left( \int_0^r\left(\int_{\mathbb{S}^{n-1}} |f(\rho\theta)|^p d\sigma(\theta)\right)^{\frac{\tilde{p}}{p}} \rho^{n-1}\rho^\alpha d\rho \right)^{\frac{1}{\tilde{p}}}\right)^q \frac{dr}{r} \right)^{\frac{1}{q}} < \infty,
\end{align*}
and for $0<p, \tilde{p}<\infty$, $q=\infty$,
\begin{align*}
&\|f\|_{LML_{rad}^{\tilde{p},\lambda,\infty}L_{ang}^{p}(\mathbb{R}^n, |x|^{\alpha})}\\
&= \sup_{r>0} \frac{1}{r^{\lambda}}\left( \int_0^r\left(\int_{\mathbb{S}^{n-1}} |f(\rho\theta)|^p d\sigma(\theta)\right)^{\frac{\tilde{p}}{p}} \rho^{n-1}\rho^\alpha d\rho \right)^{\frac{1}{\tilde{p}}} < \infty,
\end{align*}
and for $0<p, q<\infty$, $\tilde{p}=\infty$,
\begin{align*}
&\|f\|_{LML_{rad}^{\infty,\lambda,q}L_{ang}^{p}(\mathbb{R}^n, |x|^{\alpha})} \\
& = \left( \int_0^\infty r^{-q\lambda} \left(\operatorname*{ess\,sup}_{0<\rho<r} \left( \int_{\mathbb{S}^{n-1}}\left|f(\rho\theta) \right|^p \rho^{\alpha} d\sigma(\theta)\right)^{\frac{1}{p}}\right)^q \frac{dr}{r} \right)^{\frac{1}{q}}< \infty,
\end{align*}
and for $0<\tilde{p}, q<\infty$, $p=\infty$,
\begin{align*}
&\|f\|_{LML_{rad}^{\tilde{p},\lambda,q}L_{ang}^{\infty}(\mathbb{R}^n, |x|^{\alpha})} \\
& = \left( \int_0^\infty r^{-q\lambda} \left(\int_0^r\left(\operatorname*{ess\,sup}_{\theta\in\mathbb{S}^{n-1}}|f(\rho\theta)|\right)^p \rho^{n+\alpha-1} d\rho\right)^{\frac{q}{p}} \frac{dr}{r} \right)^{\frac{1}{q}}< \infty.
\end{align*}
\end{definition}

Note that $\tilde{p} = p = \infty$, $\|f\|_{LML_{rad}^{\infty,\lambda,q}L_{ang}^{\infty}(\mathbb{R}^n, |x|^{\alpha})}=\|f\|_{LM^{\lambda}_{\infty,q} (\mathbb{R}^n)}$. In the same way, $LML_{rad}^{\tilde{p},\lambda,q}L_{ang}^{p}(\mathbb{R}^n, |x|^{\alpha})\neq\Theta$~if and only if~$q$~and~$\lambda$~satisfy (1.3). The space ${LM^{\lambda}_{\infty,q} (\mathbb{R}^n)}$~has already been studied in \cite{An2023}, so this article does not consider this situation.
\begin{definition}
Let $0 < p, \tilde{p}, q \leq \infty$ and $\lambda \leq \infty$. Then the mixed radial-angular complementary local Morrey-type space ${}^{c}LML_{rad}^{\tilde{p},\lambda,q}L_{ang}^{p}(\mathbb{R}^n, |x|^{\alpha})$ for  $0 < p, \tilde{p}, q< \infty$ is the set of all measurable functions $f$ satisfying
\begin{align*}
&\|f\|_{{}^{c}LML_{rad}^{\tilde{p},\lambda,q}L_{ang}^{p}(\mathbb{R}^n, |x|^{\alpha})} \\
&= \left(\int_0^\infty\left(\frac{1}{r^\lambda}\left( \int_r^{\infty}\left(\int_{\mathbb{S}^{n-1}} |f(\rho\theta)|^p d\sigma(\theta)\right)^{\frac{\tilde{p}}{p}} \rho^{n-1}\rho^\alpha d\rho \right)^{\frac{1}{\tilde{p}}}\right)^q \frac{dr}{r} \right)^{\frac{1}{q}} < \infty,
\end{align*}
and for $0<p, \tilde{p}<\infty$, $q=\infty$,
\begin{align*}
&\|f\|_{{}^{c}LML_{rad}^{\tilde{p},\lambda,\infty}L_{ang}^{p}(\mathbb{R}^n, |x|^{\alpha})}\\
&= \sup_{r>0} \frac{1}{r^{\lambda}}\left( \int_r^{\infty}\left(\int_{\mathbb{S}^{n-1}} |f(\rho\theta)|^p d\sigma(\theta)\right)^{\frac{\tilde{p}}{p}} \rho^{n-1}\rho^\alpha d\rho \right)^{\frac{1}{\tilde{p}}} < \infty,
\end{align*}
and for $0<p, q<\infty$, $\tilde{p}=\infty$,
\begin{align*}
&\|f\|_{LML_{rad}^{\infty,\lambda,q}L_{ang}^{p}(\mathbb{R}^n, |x|^{\alpha})} \\
& = \left( \int_0^\infty r^{-q\lambda} \left(\operatorname*{ess\,sup}_{r<\rho<\infty} \left( \int_{\mathbb{S}^{n-1}}\left|f(\rho\theta) \right|^p \rho^{\alpha} d\sigma(\theta)\right)^{\frac{1}{p}}\right)^q \frac{dr}{r} \right)^{\frac{1}{q}}< \infty,
\end{align*}
and for $0<\tilde{p}, q<\infty$, $p=\infty$,
\begin{align*}
&\|f\|_{LML_{rad}^{\tilde{p},\lambda,q}L_{ang}^{\infty}(\mathbb{R}^n, |x|^{\alpha})} \\
& = \left( \int_0^\infty r^{-q\lambda} \left(\int_r^\infty\left(\operatorname*{ess\,sup}_{\theta\in\mathbb{S}^{n-1}}|f(\rho\theta)|\right)^p \rho^{n+\alpha-1} d\rho\right)^{\frac{q}{p}} \frac{dr}{r} \right)^{\frac{1}{q}}< \infty.
\end{align*}
\end{definition}

Recently, An et al. \cite{An2023} obtained necessary and sufficient conditions for the boundedness of multilinear Hausdorff operators on local Morrey-type spaces and sharp contants. In 2025, Wei et al. \cite{Wei2025} gave sharp estimates for the multilinear Hausdorff operators on complementary local Morrey-type spaces. Inspired by the results in \cite{An2023,Wei2025} and the ideas of distinguishing the mixed radial-angular integrabilities. It is natural to consider the corresponding sharp bounds for the multilinear Hausdorff operators on mixed radial-angular local Morrey-type spaces and on mixed radial-angular complementary local Morrey-type spaces. Furthermore, we also establish the results with the power weight.

The outline of this paper is as follows. In Section 2, we state the main results of the multilinear Hausdorff operators on mixed radial-angular local Morrey-type spaces,  some related applications of the main results are given in Section 3. In Section 4, we obtain several supplementary conclusions of the multilinear Hausdorff operators on mixed radial-angular complementary local Morrey-type spaces.

\section{Main results}
We now present our main theorems.

\begin{theorem}\label{th1}
Suppose that~$\Phi$~is a nonnegative, locally integrable, radial function.\\
(i) Let~$1 \leq p, \tilde{p}, q \leq \infty$ and $\alpha\in\mathbb{R}$. If (1.3) be satisfied. Then~$\widetilde{\mathcal{H}_{\Phi}}$~is bounded on the local Morrey-type space~$LML_{rad}^{\tilde{p},\lambda,q}L_{ang}^{p}(\mathbb{R}^n, |x|^{\alpha})$, that is,
$$
\|\widetilde{\mathcal{H}_{\Phi}}(f)\|_{LML_{rad}^{\tilde{p},\lambda,q}L_{ang}^{p}(\mathbb{R}^n, |x|^{\alpha})} \leq C_{\Phi,1}\|f\|_{LML_{rad}^{\tilde{p},\lambda,q}L_{ang}^{p}(\mathbb{R}^n, |x|^{\alpha})}
$$
if and only if
$$
C_{\Phi,1} = \omega_n \int_{0}^{\infty} \frac{\Phi(t)}{t^{\lambda-\frac{n+\alpha}{\tilde{p}} + 1}} dt < \infty.
$$
Moreover,
$$
\|\widetilde{\mathcal{H}_{\Phi}}\|_{LML_{rad}^{\tilde{p},\lambda,q}L_{ang}^{p}(\mathbb{R}^n, |x|^{\alpha}) \to LML_{rad}^{\tilde{p},\lambda,q}L_{ang}^{p}(\mathbb{R}^n, |x|^{\alpha})} = C_{\Phi,1}.
$$
(ii) Let~$0 < p,\tilde{p} < 1$, $0 < q < 1$ and $\alpha\in\mathbb{R}$. If~$f$~is nonnegative. Then
$$
\|\widetilde{\mathcal{H}_{\Phi}}(f)\|_{LML_{rad}^{\tilde{p},\lambda,q}L_{ang}^{p}(\mathbb{R}^n, |x|^{\alpha})} \geq C_{\Phi,1}\|f\|_{LML_{rad}^{\tilde{p},\lambda,q}L_{ang}^{p}(\mathbb{R}^n, |x|^{\alpha})}.
$$
\end{theorem}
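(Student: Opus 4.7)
The plan is to first rewrite the operator in a one-dimensional form using the radiality of $\Phi$. Passing to polar coordinates $y = \rho\theta$ and then substituting $t = |x|/\rho$, one obtains
$$\widetilde{\mathcal{H}_{\Phi}}(f)(x) = \int_{0}^{\infty} \Phi(t) \int_{\mathbb{S}^{n-1}} f\!\left(\frac{|x|\theta}{t}\right) d\sigma(\theta)\, \frac{dt}{t}.$$
The crucial observation is that the right-hand side depends on $x$ only through $|x|$, so $\widetilde{\mathcal{H}_{\Phi}}(f)$ is radial and its angular norm on each sphere is simply $|\mathbb{S}^{n-1}|^{1/p}$ times its value. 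All subsequent estimates thus reduce to one-dimensional weighted inequalities in $\rho$ and $r$.

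For the sufficient direction of (i), I would apply Minkowski's integral inequality (valid since $\tilde{p}, q \geq 1$) successively in the $L^{\tilde{p}}(\rho^{n-1+\alpha}d\rho)$ and then $L^{q}(r^{-\lambda q - 1}dr)$ norms to pull the integration against $\Phi(t)\,dt/t$ outside, and Hölder's inequality on $\mathbb{S}^{n-1}$ to bound $\int_{\mathbb{S}^{n-1}} f(\rho\theta/t)\,d\sigma \leq |\mathbb{S}^{n-1}|^{1/p'} \bigl(\int_{\mathbb{S}^{n-1}}|f(\rho\theta/t)|^{p}d\sigma\bigr)^{1/p}$. The substitutions $\rho' = \rho/t$ inside and $r' = r/t$ outside produce exactly the factor $t^{(n+\alpha)/\tilde{p} - \lambda - 1}$ in the residual $t$-integral and recover $\|f\|_{LML_{rad}^{\tilde{p},\lambda,q}L_{ang}^{p}(\mathbb{R}^n,|x|^\alpha)}$ in the rest, while $|\mathbb{S}^{n-1}|^{1/p + 1/p'} = \omega_n$ absorbs the sphere constants. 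This gives the upper bound with sharp constant $C_{\Phi,1}$. The cases with any of $p$, $\tilde{p}$, $q$ equal to $\infty$ are handled analogously by replacing the corresponding norm with an essential supremum.

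To establish both the necessity of $C_{\Phi,1} < \infty$ and the sharpness of the constant, I would exploit that $\widetilde{\mathcal{H}_{\Phi}}$ acts diagonally on homogeneous functions. Setting $\gamma = (n+\alpha)/\tilde{p} - \lambda$, the same polar change of variables gives the pointwise identity $\widetilde{\mathcal{H}_{\Phi}}(|\cdot|^{-\gamma})(x) = C_{\Phi,1}\,|x|^{-\gamma}$. For $q = \infty$, the function $|y|^{-\gamma}$ itself lies in the space, and inserting it directly as a test function identifies the operator norm with $C_{\Phi,1}$. For $q < \infty$ one must regularize, and I would use the truncation $f_{\delta}(y) = |y|^{-\gamma}\chi_{\{\delta < |y| < 1/\delta\}}$; both $\|f_{\delta}\|_{LML}$ and $\|\widetilde{\mathcal{H}_{\Phi}}(f_{\delta})\|_{LML}$ are computable by direct integration, since $\rho^{-\gamma\tilde{p}+n-1+\alpha} = \rho^{\lambda\tilde{p}-1}$, and convergence of the outer $r$-integral relies on $\lambda > 0$, which is guaranteed by (1.3). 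Letting $\delta \to 0^{+}$, the ratio $\|\widetilde{\mathcal{H}_{\Phi}}(f_{\delta})\|/\|f_{\delta}\|$ approaches $C_{\Phi,1}$; this simultaneously forces $C_{\Phi,1} < \infty$ whenever $\widetilde{\mathcal{H}_\Phi}$ is bounded and yields the matching lower estimate for the operator norm.

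Finally, for part (ii) the plan is to run the argument of the upper bound verbatim but with every inequality reversed, which is permitted by the assumption $f \geq 0$ and the exponents lying in $(0,1)$. Jensen's inequality applied to the concave function $t \mapsto t^{p}$ yields $\int_{\mathbb{S}^{n-1}} f\,d\sigma \geq |\mathbb{S}^{n-1}|^{1 - 1/p}\bigl(\int_{\mathbb{S}^{n-1}} f^{p}d\sigma\bigr)^{1/p}$, while the reverse Minkowski integral inequality (valid for $0 < \tilde{p}, q < 1$ on nonnegative integrands) permits pulling $\int_{0}^{\infty} \Phi(t) (\cdots)\, dt/t$ outside of the $L^{\tilde{p}}_{\rho}$ and $L^{q}_{r}$ norms in the opposite direction. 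The same substitutions reproduce $\|f\|_{LML}$ and the residual factor $t^{(n+\alpha)/\tilde{p} - \lambda - 1}$, yielding the claimed lower bound. I expect the main technical obstacle to be the sharpness step in (i) for $q < \infty$: computing the norms of the truncated test functions $f_{\delta}$ and verifying that the ratio converges to exactly $C_{\Phi,1}$ rather than to some smaller limit, which rests on the scale invariance of the extremizer $|y|^{-\gamma}$ and on the positivity of $\lambda$ to guarantee convergence of the tail integrals.
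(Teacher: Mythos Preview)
Your polar-coordinate rewriting, the Minkowski--H\"older chain for the upper bound in (i), and the reversed inequalities in (ii) are exactly what the paper does. The substantive difference is in the sharpness step of (i): you test with (truncations of) the exact eigenfunction $|y|^{-\gamma}$, $\gamma=(n+\alpha)/\tilde p-\lambda$, whereas the paper perturbs the exponent, taking $f_0(x)=|x|^{\beta}\chi_{\{|x|>1\}}$ with $\beta=\lambda-(n+\alpha)/\tilde p-\epsilon$ and a one-sided cutoff. With this subcritical $\beta$ the norm $\|f_0\|$ is finite and computes to a Beta function; the lower bound for $\|\widetilde{\mathcal H_\Phi}(f_0)\|$ is obtained by restricting to $\rho,r>\epsilon^{-1}$, producing an explicit factor $\epsilon^{\epsilon}\to1$.

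Your eigenfunction shortcut is cleaner when $q=\infty$ and $\lambda>0$: then $|y|^{-\gamma}$ really lies in the space (the inner integral is $\int_0^r\rho^{\lambda\tilde p-1}d\rho<\infty$), and the identity $\widetilde{\mathcal H_\Phi}(|\cdot|^{-\gamma})=C_{\Phi,1}|\cdot|^{-\gamma}$ immediately gives both necessity and the lower bound. But there is a genuine gap at $q=\infty$, $\lambda=0$: then $-\gamma\tilde p+n+\alpha-1=-1$ and $\int_0^r\rho^{-1}d\rho=\infty$, so $|y|^{-\gamma}$ is \emph{not} in the space. The paper treats this borderline case separately with $f_\epsilon(x)=|x|^{-n/\tilde p-\epsilon}\chi_{\{|x|>1\}}$.

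For $q<\infty$ your two-sided truncation at the critical exponent can be pushed through, but not as directly as you indicate: both $\|f_\delta\|^q$ and $\|\widetilde{\mathcal H_\Phi}(f_\delta)\|^q$ diverge like $\log(1/\delta)$, and extracting the limit $C_{\Phi,1}$ of the ratio requires an auxiliary parameter (restrict to $\rho$ in a subrange where $\int_{\delta\rho}^{\rho/\delta}\Phi(t)t^{\gamma-1}dt$ is uniformly close to $C_{\Phi,1}/\omega_n$) and then a double limit. The paper's perturbed-exponent route sidesteps this: the norms stay finite, the comparison is a single explicit Beta-function identity, and only one limit $\epsilon\to0^+$ is needed.
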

\noindent Here and below, $\omega_n$ is the area of the unit sphere in $\mathbb{R}^n$.

\begin{theorem}\label{th2}
Suppose that~$\Phi$~is a nonnegative locally integrable function.\\
(i) Let~$1 \leq p, \tilde{p},q \leq \infty$ and $\alpha\in\mathbb{R}$. If (1.3) be satisfied. Then~$\mathcal{H}_{\Phi}$~is bounded on the local Morrey-type space~$LML_{rad}^{\tilde{p},\lambda,q}L_{ang}^{p}(\mathbb{R}^n, |x|^{\alpha})$, that is,
$$
\|\mathcal{H}_{\Phi}(f)\|_{LML_{rad}^{\tilde{p},\lambda,q}L_{ang}^{p}(\mathbb{R}^n, |x|^{\alpha})} \leq C_{\Phi,2}\|f\|_{LML_{rad}^{\tilde{p},\lambda,q}L_{ang}^{p}(\mathbb{R}^n, |x|^{\alpha})}
$$
if and only if
$$
C_{\Phi,2} = \int_{\mathbb{R}^n} \frac{\Phi(y)}{|y|^{n - \frac{n+\alpha}{\tilde{p}} + \lambda}} dy < \infty.
$$
Moreover,
$$
\|\mathcal{H}_{\Phi}\|_{LML_{rad}^{\tilde{p},\lambda,q}L_{ang}^{p}(\mathbb{R}^n, |x|^{\alpha}) \to LML_{rad}^{\tilde{p},\lambda,q}L_{ang}^{p}(\mathbb{R}^n, |x|^{\alpha})} = C_{\Phi,2}.
$$
(ii) Let~$0 < p, \tilde{p} < 1$, $0 < q < 1$ and $\alpha\in\mathbb{R}$. If~$f$~is nonnegative. Then
$$
\|\mathcal{H}_{\Phi}(f)\|_{LML_{rad}^{\tilde{p},\lambda,q}L_{ang}^{p}(\mathbb{R}^n, |x|^{\alpha})} \geq C_{\Phi,2}\|f\|_{LML_{rad}^{\tilde{p},\lambda,q}L_{ang}^{p}(\mathbb{R}^n, |x|^{\alpha})}.
$$
\end{theorem}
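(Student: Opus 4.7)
The strategy mirrors that of Theorem~\ref{th1}, with the caveat that $\Phi$ is no longer radial. All three directions (sufficiency and necessity in (i), and the reverse inequality in (ii)) rest on two ingredients: Minkowski's integral inequality---forward when $p,\tilde p,q\ge 1$ and reversed on nonnegative integrands when $p,\tilde p,q<1$---together with the scaling identity
$$
\|f(\cdot/|y|)\|_{LML_{rad}^{\tilde p,\lambda,q}L_{ang}^{p}(\mathbb{R}^n,|x|^{\alpha})}=|y|^{(n+\alpha)/\tilde p-\lambda}\,\|f\|_{LML_{rad}^{\tilde p,\lambda,q}L_{ang}^{p}(\mathbb{R}^n,|x|^{\alpha})},
$$
obtained from the substitutions $\rho=|y|s$ in the radial layer and $r=|y|t$ in the outer layer, the angular layer being invariant. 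The identity persists in each endpoint case $p,\tilde p,q=\infty$ after the obvious replacements of the integrals by essential suprema.

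\textbf{Part (i).} For the upper bound, applying the forward Minkowski integral inequality in each of the three layers pulls the $y$-integration outside the mixed norm, and the scaling identity then yields $\|\mathcal{H}_\Phi f\|\le\int_{\mathbb{R}^n}\Phi(y)|y|^{-n-\lambda+(n+\alpha)/\tilde p}\,dy\cdot\|f\|=C_{\Phi,2}\|f\|$. For sharpness and necessity, the homogeneous function $f_0(x)=|x|^{\lambda-(n+\alpha)/\tilde p}$ is, by direct substitution into the defining integral, a formal eigenfunction of $\mathcal{H}_\Phi$ with eigenvalue $C_{\Phi,2}$. Since $\|f_0\|=\infty$, I will pass to truncated extremizers of the form $f_N(x)=|x|^{\lambda-(n+\alpha)/\tilde p}\chi_{\{1\le|x|\le N\}}$; because $f_N$ is radial and the set $\{|x|/N\le|y|\le|x|\}$ is rotation symmetric in $y$, $\mathcal{H}_\Phi f_N$ is again radial in $x$, so a direct computation of both norms identifies the ratio $\|\mathcal{H}_\Phi f_N\|/\|f_N\|$ as $C_{\Phi,2}(1+o(1))$ when $N\to\infty$. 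This forces $C_{\Phi,2}<\infty$ whenever the operator is bounded, and delivers the matching lower bound on the operator norm.

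\textbf{Part (ii) and main obstacle.} For $p,\tilde p,q<1$ the Minkowski integral inequality reverses on nonnegative integrands,
$$
\left(\int\Bigl(\int F(x,y)\,dy\Bigr)^{r}dx\right)^{1/r}\ge\int\left(\int F(x,y)^{r}\,dx\right)^{1/r}dy,\qquad 0<r\le 1,\ F\ge 0,
$$
and three applications (one per layer) to $F(x,y)=\Phi(y)|y|^{-n}f(x/|y|)$, followed by the same scaling identity, produce $\|\mathcal{H}_\Phi f\|\ge C_{\Phi,2}\|f\|$. The technical heart of the argument is the sharpness step in (i): since $f_N$ is not an eigenfunction of $\mathcal{H}_\Phi$, one must split the $y$-integral into the central range $\{|x|/N\le|y|\le|x|\}$, where $f_N(x/|y|)=(|x|/|y|)^{\lambda-(n+\alpha)/\tilde p}$ and the contribution agrees up to $(1+o(1))$ with $C_{\Phi,2}f_N(x)$, and its complement, which must be shown to be lower order across all three layers of the mixed norm as $N\to\infty$. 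The endpoint cases $p,\tilde p=\infty$ or $q=\infty$ require separate treatment because the corresponding layer collapses to an essential supremum, but the algebraic skeleton is identical.
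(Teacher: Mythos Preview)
Your upper bound in (i) and the reversed inequality in (ii) are correct and coincide with the paper's strategy: both follow from Minkowski's integral inequality (forward or reversed) together with the dilation identity you wrote down, exactly as in the proof of Theorem~\ref{th1}.

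The sharpness step, however, departs from the paper and your sketch of it misidentifies the obstacle. The paper takes a one-sided truncation with a \emph{subcritical} exponent, $f_\epsilon(x)=|x|^{\lambda-(n+\alpha)/\tilde p-\epsilon}\chi_{\{|x|>1\}}(x)$; then $\mathcal H_\Phi f_\epsilon(x)=|x|^{\beta}\int_{|y|<|x|}\Phi(y)|y|^{-n-\beta}\,dy$, and restricting the inner integral to $|y|<\epsilon^{-1}$ and the outer norm to $|x|>\epsilon^{-1}$ yields, after an explicit Beta-function computation already carried out in Theorem~\ref{th1}, the lower bound $\epsilon^{\epsilon}\int_{|y|<\epsilon^{-1}}\Phi(y)|y|^{-n-\beta}\,dy$, and one sends $\epsilon\to 0^{+}$.

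Your two-sided truncation $f_N$ at the \emph{critical} exponent is a legitimate alternative, but the stated difficulty is wrong. Since $f_N(x/|y|)=0$ unless $|x|/N\le|y|\le|x|$, there is no ``complement'' in the $y$-integral to control---it contributes zero, not something of lower order. The real issue is that
\[
\mathcal H_\Phi f_N(x)=|x|^{\lambda-(n+\alpha)/\tilde p}\int_{|x|/N\le|y|\le|x|}\Phi(y)|y|^{-n-\lambda+(n+\alpha)/\tilde p}\,dy
\]
holds for \emph{all} $x$, and the inner integral is close to $C_{\Phi,2}$ only when both $|x|\to\infty$ and $|x|/N\to 0$. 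Hence the pointwise relation $\mathcal H_\Phi f_N(x)\approx C_{\Phi,2}f_N(x)$ fails near $|x|=1$, near $|x|=N$, and beyond $N$ (where $f_N=0$ but $\mathcal H_\Phi f_N$ is not). What you must actually do is restrict $x$ to a middle shell $M\le|x|\le N/M$, on which the $y$-integral exceeds $C_{\Phi,2}-\delta$, and then verify that the mixed norm over that shell still captures asymptotically all of $\|f_N\|$; for $1\le q<\infty$ both behave like $(\log N)^{1/q}$, so the loss is $O((\log M)/\log N)=o(1)$ as $N\to\infty$ with $M$ fixed. This route works, but it is not what you wrote, and the paper's $\epsilon$-perturbation of the exponent avoids this logarithmic bookkeeping entirely.
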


\begin{theorem}\label{th3}
Suppose that~$\Phi$~is a nonnegative locally integrable function.\\
(i) Let~$1 \leq p, p_i, \tilde{p}, \tilde{p_i} < \infty$, $1 \leq q \leq \infty$ satisfy $\frac{1}{p} = \sum\limits_{i=1}^{m} \frac{1}{p_i}$ and $\frac{1}{\tilde{p}} = \sum\limits_{i=1}^{m} \frac{1}{\tilde{p_i}}$. Assume that $\alpha\in\mathbb{R}$, $\lambda = \sum\limits_{i=1}^{m} \lambda_i$~for~$i = 1, \ldots, m$, $\lambda$ satisfies (1.3), and $\lambda_i$~satisfies
\[
\lambda_i > 0 \text{ if } q < \infty \quad \text{and } \quad \lambda_i \geq 0 \text{ if } q = \infty.
\tag{2.1}
\]
If
$$
C_{\Phi,3} = \int_{\mathbb{R}^{n_m}} \cdots \int_{\mathbb{R}^{n_2}} \int_{\mathbb{R}^{n_1}} \frac{\Phi(\vec{u})}{\prod\limits_{i=1}^{m} |u_i|^{n_i}} \prod\limits_{i=1}^{m} |u_i|^{\frac{n+\alpha}{\tilde{p_i}} - \lambda_i} d\vec{u} < \infty,
$$
then
$$
\|R_{\Phi}(\vec{f})\|_{LML_{rad}^{\tilde{p},\lambda,q}L_{ang}^{p}(\mathbb{R}^n, |x|^{\alpha})} \leq C_{\Phi,3} \prod_{i=1}^{m} \|f_i\|_{LML_{rad}^{\tilde{p_i},\lambda,{(q\tilde{p_i})}/\tilde{p}}L_{ang}^{p_i}(\mathbb{R}^n, |x|^{\alpha})}.
$$
Moreover, if
$$
\lambda p = \lambda_i p_i, \quad i = 1, \ldots, m,
$$
then
$$
\|R_{\Phi}\|_{\prod\limits_{i=1}^{m} LML_{rad}^{\tilde{p_i},\lambda,{(q\tilde{p_i})}/\tilde{p}}L_{ang}^{p_i}(\mathbb{R}^n, |x|^{\alpha}) \to LML_{rad}^{\tilde{p},\lambda,q}L_{ang}^{p}(\mathbb{R}^n, |x|^{\alpha})} = C_{\Phi,3}.
$$
(ii) Let~$1 \leq p, p_i, q, q_i < \infty$ satisfy $\frac{1}{p} = \sum\limits_{i=1}^{m} \frac{1}{p_i}$~and~$\frac{1}{q} = \sum\limits_{i=1}^m \frac{1}{q_i}$. Assume that $\alpha\in\mathbb{R}$, $\lambda, \lambda_i > 0$, and $\lambda = \sum\limits_{i=1}^{m} \lambda_i$~for~$i = 1, \ldots, m$. If
$$
\widetilde{C_{\Phi,3}} = \int_{\mathbb{R}^{n_m}} \cdots \int_{\mathbb{R}^{n_2}} \int_{\mathbb{R}^{n_1}} \frac{\Phi(\vec{u})}{\prod\limits_{i=1}^{m} |u_i|^{n_i + \lambda_i -\frac{\alpha}{p_i}}} d\vec{u} < \infty,
$$
then
$$
\|R_{\Phi}(\vec{f})\|_{LML_{rad}^{\infty,\lambda,q}L_{ang}^{p}(\mathbb{R}^n, |x|^{\alpha})} \leq \widetilde{C_{\Phi,3}} \prod\limits_{i=1}^{m} \|f_i\|_{LML_{rad}^{\infty,\lambda_i,q_i}L_{ang}^{p_i}(\mathbb{R}^n, |x|^{\alpha})}.
$$
Moreover, if
$$
\lambda q = \lambda_i q_i, \quad i = 1, \ldots, m,
$$
then
$$
\|R_{\Phi}\|_{\prod\limits_{i=1}^{m} LML_{rad}^{\infty,\lambda_i,q_i}L_{ang}^{p_i}(\mathbb{R}^n, |x|^{\alpha}) \to LML_{rad}^{\infty,\lambda,q}L_{ang}^{p}(\mathbb{R}^n, |x|^{\alpha})} = \widetilde{C_{\Phi,3}}.
$$
\end{theorem}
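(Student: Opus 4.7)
The plan is to establish Theorem~\ref{th3} by the standard two-step scheme: Minkowski's integral inequality to push the outer norm inside the $\vec u$-integral, followed by a layered multilinear Hölder inequality to split the product, and a dilation to factor out the scaling. Expanding $\|R_\Phi(\vec f)\|_{LML_{rad}^{\tilde p,\lambda,q}L_{ang}^{p}(\mathbb{R}^n,|x|^\alpha)}$ via Definition~1.6 and using $1\le p,\tilde p<\infty$, $1\le q\le\infty$, I would apply Minkowski's inequality three times in succession---first in the angular $L^p(S^{n-1})$, then in the radial $L^{\tilde p}(\rho^{n+\alpha-1}d\rho)$ on $(0,r)$, then in the outer $L^q(dr/r)$---to obtain
\[
\|R_\Phi(\vec f)\|_{LML_{rad}^{\tilde p,\lambda,q}L_{ang}^{p}} \;\le\; \int \frac{\Phi(\vec u)}{\prod_{i=1}^m |u_i|^{n_i}} \Bigl\|\prod_{i=1}^m f_i(\cdot/|u_i|)\Bigr\|_{LML_{rad}^{\tilde p,\lambda,q}L_{ang}^{p}} d\vec u.
\]

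Next, I would bound the norm of the product inside by a product of single-factor norms. A layered Hölder inequality applies: angularly with $1/p=\sum 1/p_i$; radially with $1/\tilde p=\sum 1/\tilde p_i$ after splitting $r^{-\lambda}=\prod r^{-\lambda_i}$ via $\lambda=\sum\lambda_i$; and on the outer $L^q(dr/r)$ via the identity $\sum \tilde p/(q\tilde p_i)=1/q$, which follows from $\sum 1/\tilde p_i=1/\tilde p$. This yields
\[
\Bigl\|\prod_{i=1}^m f_i(\cdot/|u_i|)\Bigr\|_{LML_{rad}^{\tilde p,\lambda,q}L_{ang}^{p}} \le \prod_{i=1}^m \bigl\| f_i(\cdot/|u_i|)\bigr\|_{LML_{rad}^{\tilde p_i,\lambda_i,(q\tilde p_i)/\tilde p}L_{ang}^{p_i}}.
\]
A dilation $\rho\mapsto|u_i|\rho$ inside the $i$-th factor then produces the scaling factor $|u_i|^{(n+\alpha)/\tilde p_i-\lambda_i}$, and re-inserting these into the $\vec u$-integral absorbs the prefactor $\Phi(\vec u)/\prod|u_i|^{n_i}$ into $C_{\Phi,3}$, which proves the upper bound.

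For the sharp-constant assertion, under the compatibility $\lambda p=\lambda_i p_i$, I would test with the power functions $f_i(x)=|x|^{\lambda_i-(n+\alpha)/\tilde p_i}$. A direct substitution into the definition of $R_\Phi$ shows $R_\Phi(\vec f)(x)=C_{\Phi,3}\,|x|^{\lambda-(n+\alpha)/\tilde p}$, and a comparison of the mixed radial-angular Morrey-type norms of $|x|^{\lambda-(n+\alpha)/\tilde p}$ and $\prod|x|^{\lambda_i-(n+\alpha)/\tilde p_i}$ reduces, using $\lambda p=\lambda_i p_i$, to an equality whose ratio is exactly $C_{\Phi,3}$. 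The main obstacle is that these extremal powers have infinite Morrey-type norm when $q<\infty$; the standard remedy, which I would adopt, is to use truncated functions $|x|^{\lambda_i-(n+\alpha)/\tilde p_i}\chi_{\{\varepsilon\le|x|\le\varepsilon^{-1}\}}$ and pass to the limit $\varepsilon\to 0^+$, or equivalently to reduce the operator norm to a scale-invariant ratio. Part~(ii) follows by the same scheme: $\tilde p=\infty$ is handled by essential supremum in place of radial $L^{\tilde p}$, the outer Hölder uses exponents $q_i$ with $\sum 1/q_i=1/q$, and sharpness is realized under the analogous compatibility $\lambda q=\lambda_i q_i$.
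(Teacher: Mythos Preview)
Your upper-bound argument for part~(i) is essentially the paper's: Minkowski in the angular, radial, and outer variables, H\"older with the exponent splittings $1/p=\sum 1/p_i$, $1/\tilde p=\sum 1/\tilde p_i$, and the dilation $\rho\mapsto |u_i|\rho$ to extract the factor $|u_i|^{(n+\alpha)/\tilde p_i-\lambda_i}$. The same goes for the boundedness in part~(ii).

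The sharpness argument, however, has a real gap. Your proposed extremizers $f_i(x)=|x|^{\lambda_i-(n+\alpha)/\tilde p_i}\chi_{\{\varepsilon\le |x|\le \varepsilon^{-1}\}}$ sit at the \emph{critical} exponent, and this choice does not give a clean lower bound: after substitution, $R_\Phi(\vec f)(x)$ is $|x|^{\lambda-(n+\alpha)/\tilde p}$ multiplied by the $\vec u$-integral restricted to the moving box $\{\varepsilon|x|\le |u_i|\le \varepsilon^{-1}|x|\}$, which is not a power of $|x|$ and whose Morrey-type norm is not easily compared to $\prod_i\|f_i\|$. The paper instead takes the one-sided, \emph{perturbed} test functions
\[
f_i(x)=|x|^{\beta_i}\chi_{\{|x|>1\}}(x),\qquad \beta_i=\lambda_i-\tfrac{n+\alpha}{\tilde p_i}-\tfrac{\varepsilon}{\tilde p_i},
\]
computes $\|f_i\|_{LML_{rad}^{\tilde p_i,\lambda_i,(q\tilde p_i)/\tilde p}L_{ang}^{p_i}}$ explicitly (via a Beta-function identity when $q<\infty$, and an elementary supremum when $q=\infty$), and bounds $\|R_\Phi(\vec f)\|$ from below by restricting the inner integral to $\rho>\varepsilon^{-1}$ and the $\vec u$-domain to $\{|u_i|<\varepsilon^{-1}\}$. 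The compatibility condition $\lambda p=\lambda_i p_i$ is then used in a concrete way you did not identify: it forces the Beta-function (respectively, supremum) expressions for the individual $\|f_i\|$ to match the corresponding expression arising in the lower bound for $\|R_\Phi(\vec f)\|$, so that the ratio equals $\varepsilon^{\varepsilon/\tilde p}$ times the truncated $C_{\Phi,3}$-integral, and letting $\varepsilon\to 0^+$ finishes. Your sketch neither specifies the perturbation of the exponent nor explains where the compatibility hypothesis is actually consumed; both are needed for the argument to close. The same remarks apply to part~(ii), where the paper uses $\beta_i=\lambda_i-\alpha/p_i-\varepsilon/q_i$ and the condition $\lambda q=\lambda_i q_i$ plays the analogous role.
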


\begin{theorem}\label{th4}
Suppose that~$\Phi$~is a nonnegative, locally integrable, radial function.
(i) Let~$1 \leq p,  p_i, \tilde{p}, \tilde{p_i} < \infty$, $1 \leq q \leq \infty$ satisfy $\frac{1}{p} = \sum\limits_{i=1}^{m} \frac{1}{p_i}$ and $\frac{1}{\tilde{p}} = \sum\limits_{i=1}^{m} \frac{1}{\tilde{p_i}}$. Assume that $\alpha\in\mathbb{R}$, $\lambda = \sum\limits_{i=1}^m \lambda_i$~for~$i = 1, \ldots, m$, $\lambda$ satisfies (1.3), and $\lambda_i$ satisfies (2.1). If
$$
C_{\Phi,4} = \prod_{i=1}^m \omega_{n_i} \int_0^\infty \cdots \int_0^\infty \frac{\Phi(\vec{t})}{\prod\limits_{i=1}^m t_i^{\lambda_i - \frac{n+\alpha}{\tilde{p_i}} + 1}} d\vec{t} < \infty,
$$
then
$$
\|\widetilde{R_\Phi}(\vec{f})\|_{LML_{rad}^{\tilde{p},\lambda,q}L_{ang}^{p}(\mathbb{R}^n, |x|^{\alpha})} \leq C_{\Phi,4} \prod_{i=1}^m \|f_i\|_{LML_{rad}^{\tilde{p_i},\lambda,{(q\tilde{p_i})}/\tilde{p}}L_{ang}^{p_i}(\mathbb{R}^n, |x|^{\alpha})}.
$$
Moreover, if
$$
\lambda p = \lambda_i p_i, \quad i = 1, \ldots, m,
$$
then
$$
\|\widetilde{R_\Phi}\|_{\prod\limits_{i=1}^m LML_{rad}^{\tilde{p_i},\lambda,{(q\tilde{p_i})}/\tilde{p}}L_{ang}^{p_i}(\mathbb{R}^n, |x|^{\alpha}) \to LML_{rad}^{\tilde{p},\lambda,q}L_{ang}^{p}(\mathbb{R}^n, |x|^{\alpha})} = C_{\Phi,4}.
$$
(ii) Let~$1 \leq p, p_i, q, q_i < \infty$ satisfy $\frac{1}{p} = \sum\limits_{i=1}^{m} \frac{1}{p_i}$~and~$\frac{1}{q} = \sum\limits_{i=1}^m \frac{1}{q_i}$. Assume that $\alpha\in\mathbb{R}$, $\lambda, \lambda_i > 0$, and $\lambda = \sum\limits_{i=1}^m \lambda_i$~for~$i = 1, \ldots, m$. If
$$
\widetilde{C_{\Phi,4}} = \prod_{i=1}^m \omega_{n_i} \int_0^\infty \cdots \int_0^\infty \frac{\Phi(\vec{t})}{\prod\limits_{i=1}^m t_i^{\lambda_i -\frac{\alpha}{p_i} + 1}} d\vec{t} < \infty,
$$
then
$$
\|\widetilde{R_\Phi}(\vec{f})\|_{LML_{rad}^{\infty,\lambda,q}L_{ang}^{p}(\mathbb{R}^n, |x|^{\alpha})} \leq \widetilde{C_{\Phi,4}} \prod_{i=1}^m \|f_i\|_{LML_{rad}^{\infty,\lambda_i,q_i}L_{ang}^{p_i}(\mathbb{R}^n, |x|^{\alpha})}.
$$
Moreover, if
$$
\lambda q = \lambda_i q_i, \quad i = 1, \ldots, m,
$$
then
$$
\|\widetilde{R_\Phi}\|_{\prod\limits_{i=1}^m LML_{rad}^{\infty,\lambda_i,q_i}L_{ang}^{p_i}(\mathbb{R}^n, |x|^{\alpha}) \to LML_{rad}^{\infty,\lambda,q}L_{ang}^{p}(\mathbb{R}^n, |x|^{\alpha})} = \widetilde{C_{\Phi,4}}.
$$
\end{theorem}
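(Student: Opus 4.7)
The plan is first to exploit the radial hypothesis on $\Phi$ to reduce $\widetilde{R_\Phi}$ to a one-dimensional operator acting on radial profiles. Writing each $u_i\in\mathbb R^{n_i}$ in polar coordinates $u_i=s_i\phi_i$ and substituting $t_i=|x|/s_i$, I would obtain
\begin{equation*}
\widetilde{R_\Phi}(\vec f)(\rho\theta)=\int_0^\infty\!\!\cdots\!\!\int_0^\infty \frac{\Phi(\vec t)}{\prod_{i=1}^m t_i}\prod_{i=1}^m\biggl(\int_{S^{n_i-1}} f_i\bigl(\tfrac{\rho}{t_i}\phi_i\bigr)\,d\sigma(\phi_i)\biggr)\,d\vec t,
\end{equation*}
which depends on $x$ only through $\rho=|x|$. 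Hence $\widetilde{R_\Phi}(\vec f)$ is a radial function, and its angular $L^p$-norm on $S^{n-1}$ is just $\omega_n^{1/p}$ times the expression above.

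\textbf{Upper bound.} For part (i), I would chain the following four estimates. H\"older in each $S^{n_i-1}$ with exponent $p_i$ replaces $\int_{S^{n_i-1}} f_i$ by $\omega_{n_i}^{1-1/p_i}(\int|f_i|^{p_i})^{1/p_i}$. Next, Minkowski's integral inequality in $L^{\tilde p}(\rho^{n-1+\alpha}d\rho)$ on $(0,r)$ pulls the $\vec t$-integral outside the inner radial norm; H\"older in $\rho$ with exponents $\tilde{p_i}$ (using $\sum 1/\tilde{p_i}=1/\tilde p$), followed by the change of variable $\sigma_i=\rho/t_i$, yields a factor $t_i^{(n+\alpha)/\tilde{p_i}}$ and the inner radial norm of $f_i$ over $(0,r/t_i)$. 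Repeating the same template on the outer $L^q(r^{-q\lambda}dr/r)$ norm---Minkowski to bring the $\vec t$-integral outside, then H\"older in $r$ with $q_i'=q\tilde{p_i}/\tilde p$ satisfying $\sum 1/q_i'=1/q$, then the change of variable $\tilde r=r/t_i$---produces each outer Morrey-type norm $\|f_i\|_{LML_{rad}^{\tilde{p_i},\lambda,q_i'}L_{ang}^{p_i}}$ together with residual $t_i$-powers of total order $-\lambda_i$, where the decomposition $\lambda=\sum\lambda_i$ records how the outer weight $r^{-\lambda}$ is split across the H\"older factors. Gathering all $t_i$-powers with the $\omega_{n_i}$ constants, the remaining $\vec t$-integral is precisely $C_{\Phi,4}$. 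Part (ii) follows the same blueprint with $\tilde p=\infty$: the inner $L^{\tilde p}$-norm in $\rho$ is replaced by $\mathrm{ess\,sup}_{0<\rho<r}$; the weight $\rho^\alpha$ sits inside the angular integral rather than the radial; $(n+\alpha)/\tilde{p_i}$ accordingly becomes $\alpha/p_i$ in the residual $t_i$-exponent; and the outer H\"older uses $\sum 1/q_i=1/q$ directly, producing factors with Morrey exponents $\lambda_i$ (rather than the common $\lambda$ of part (i)).

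\textbf{Sharpness and main obstacle.} For the matching lower bound giving equality of operator norms, I would test against the truncated radial power functions $f_{i,\epsilon}(y)=|y|^{\gamma_i}\chi_{\{\epsilon\leq|y|\leq\epsilon^{-1}\}}(y)$, where $\gamma_i=\lambda_i-(n+\alpha)/\tilde{p_i}$ in part (i) (resp.\ $\gamma_i=\lambda_i-\alpha/p_i$ in part (ii)). Under the scaling constraint $\lambda p=\lambda_i p_i$ (resp.\ $\lambda q=\lambda_i q_i$), the homogeneity of $\widetilde{R_\Phi}(\vec f_\epsilon)$ matches that of $\prod f_{i,\epsilon}$, and a direct calculation of both sides shows $\|\widetilde{R_\Phi}(\vec f_\epsilon)\|/\prod\|f_{i,\epsilon}\|\to C_{\Phi,4}$ (resp.\ $\widetilde{C_{\Phi,4}}$) as $\epsilon\to 0^+$, since the domain of integration in $\vec t$ expands to $(0,\infty)^m$ and each intermediate H\"older/Minkowski step becomes asymptotically sharp. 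The main technical hurdle lies in the upper bound: arranging the distribution of the weight $r^{-\lambda}$ across the outer H\"older factors so that in part (i) every resulting factor carries the common Morrey exponent $\lambda$ demanded by the statement, while tracking that the residual powers of $t_i$ produced by the two changes of variable $\sigma_i=\rho/t_i$ and $\tilde r=r/t_i$ telescope exactly to the exponent $\lambda_i-(n+\alpha)/\tilde{p_i}+1$ in the denominator of $C_{\Phi,4}$. Verifying this identity is the crux and rests on a careful combination of the index relations $\sum\tilde p/\tilde{p_i}=1$ and $\sum q/q_i'=1$ (respectively $\sum q/q_i=1$ in part (ii)) with the chosen decomposition $\lambda=\sum\lambda_i$.
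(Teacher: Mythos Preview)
Your proposal is correct and matches the paper's approach: both exploit the radiality of $\Phi$ to rewrite $\widetilde{R_\Phi}$ in terms of spherical averages of the $f_i$ (the paper isolates this reduction as a separate lemma, showing $\widetilde{R_\Phi}(\vec{g_f})=\widetilde{R_\Phi}(\vec f)$ for the spherical averages $g_{f_i}$ together with $\|g_{f_i}\|\le\|f_i\|$, so that one may assume the $f_i$ radial), then run exactly the Minkowski--H\"older chain you describe for the upper bound and test on truncated radial powers for sharpness. One minor difference worth flagging: the paper's extremizing sequence uses a \emph{perturbed} exponent $\beta_i=\lambda_i-(n+\alpha)/\tilde p_i-\epsilon/\tilde p_i$ with one-sided truncation $\chi_{\{|x|>1\}}$, which produces finite Beta-function expressions in both numerator and denominator and lets $\epsilon\to0^+$ cleanly, whereas your two-sided truncation at the exact critical exponent $\gamma_i=\lambda_i-(n+\alpha)/\tilde p_i$ would require tracking cancelling logarithmic divergences; the perturbed choice is the safer route.
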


\begin{theorem}\label{th5}
Suppose that~$\Phi$~is a nonnegative locally integrable function.\\
(i) Let~$1 \leq  p, p_i, \tilde{p}, \tilde{p_i} < \infty$, $1 \leq q \leq \infty$ satisfy $\frac{1}{p} = \sum\limits_{i=1}^m \frac{1}{p_i}$ and $\frac{1}{\tilde{p}} = \sum\limits_{i=1}^{m} \frac{1}{\tilde{p_i}}$. Assume that $\alpha\in\mathbb{R}$, $\lambda = \sum\limits_{i=1}^m \lambda_i$~ for~$i = 1, \ldots, m$, $\lambda$ satisfies (1.3), and $\lambda_i$ satisfies (2.1). If
$$
C_{\Phi,5} = \int_{\mathbb{R}^{n_m}} \cdots \int_{\mathbb{R}^{n_2}}\int_{\mathbb{R}^{n_1}} \frac{\Phi(\vec{u})}{|\vec{u}|^{\sum\limits_{i=1}^m n_i}} \prod\limits_{i=1}^m |u_i|^{\frac{n+\alpha}{\tilde{p_i}} - \lambda_i} d\vec{u} < \infty,
$$
then
$$
\|S_\Phi(\vec{f})\|_{LML_{rad}^{\tilde{p},\lambda,q}L_{ang}^{p}(\mathbb{R}^n, |x|^{\alpha})} \leq C_{\Phi,5} \prod_{i=1}^m \|f_i\|_{LML_{rad}^{\tilde{p_i},\lambda,{(q\tilde{p_i})}/\tilde{p}}L_{ang}^{p_i}(\mathbb{R}^n, |x|^{\alpha})}.
$$
Moreover, if
$$
\lambda p = \lambda_i p_i, \quad i = 1, \ldots, m,
$$
then
$$
\|S_\Phi\|_{\prod\limits_{i=1}^m LML_{rad}^{\tilde{p_i},\lambda,{(q\tilde{p_i})}/\tilde{p}}L_{ang}^{p_i}(\mathbb{R}^n, |x|^{\alpha}) \to LML_{rad}^{\tilde{p},\lambda,q}L_{ang}^{p}(\mathbb{R}^n, |x|^{\alpha})} = C_{\Phi,5}.
$$
(ii) Let~$1 \leq p, p_i, q, q_i < \infty$ satisfy $\frac{1}{p} = \sum\limits_{i=1}^{m} \frac{1}{p_i}$~and~$\frac{1}{q} = \sum\limits_{i=1}^m \frac{1}{q_i}$. Assume that $\alpha\in\mathbb{R}$, $\lambda, \lambda_i > 0$, and $\lambda = \sum\limits_{i=1}^m \lambda_i$~for~$i = 1, \ldots, m$. If
$$
\widetilde{C_{\Phi,5}} = \int_{\mathbb{R}^{n_m}} \cdots \int_{\mathbb{R}^{n_2}}\int_{\mathbb{R}^{n_1}} \frac{\Phi(\vec{u})}{|\vec{u}|^{\sum\limits_{i=1}^m n_i}} \prod_{i=1}^m |u_i|^{\frac{\alpha}{p_i}-\lambda_i} d\vec{u} < \infty,
$$
then
$$
\|S_\Phi(\vec{f})\|_{LML_{rad}^{\infty,\lambda,q}L_{ang}^{p}(\mathbb{R}^n, |x|^{\alpha})} \leq \widetilde{C_{\Phi,5}} \prod_{i=1}^m \|f_i\|_{LML_{rad}^{\infty,\lambda_i,q_i}L_{ang}^{p_i}(\mathbb{R}^n, |x|^{\alpha})}.
$$
Moreover, if
$$
\lambda q = \lambda_i q_i, \quad i = 1, \ldots, m,
$$
then
$$
\|S_\Phi\|_{\prod\limits_{i=1}^m LML_{rad}^{\infty,\lambda_i,q_i}L_{ang}^{p_i}(\mathbb{R}^n, |x|^{\alpha}) \to LML_{rad}^{\infty,\lambda,q}L_{ang}^{p}(\mathbb{R}^n, |x|^{\alpha})} = \widetilde{C_{\Phi,5}}.
$$
\end{theorem}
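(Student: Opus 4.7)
The proof of Theorem~\ref{th5} follows the same architecture as Theorems~\ref{th3} and~\ref{th4}; the only structural change from Theorem~\ref{th3} is that the kernel denominator $\prod_{i=1}^m |u_i|^{n_i}$ is replaced by $|\vec{u}|^{\sum_{i=1}^m n_i}$. This denominator plays a purely passive role: it is a non-negative weight that does not interact with the change of variables $\rho\mapsto\rho|u_i|$ used to generate the weights $|u_i|^{(n+\alpha)/\tilde{p_i}-\lambda_i}$. The plan is therefore to reuse the strategy of Theorem~\ref{th3} verbatim and verify that the sharp constant is exactly $C_{\Phi,5}$ (resp.\ $\widetilde{C_{\Phi,5}}$) under the balance conditions.

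For the upper bound in (i), I would first pointwise estimate $|S_\Phi(\vec{f})(\rho\theta)|$ by the integrand with each $f_i$ replaced by $|f_i|$, move the $L^p_{ang}(\mathbb{S}^{n-1})$ norm inside $\int d\vec{u}$ by Minkowski's integral inequality, and split the resulting angular norm as $\prod_{i=1}^m \|f_i(\tfrac{\rho}{|u_i|}\,\cdot)\|_{L^{p_i}(\mathbb{S}^{n-1})}$ by multilinear H\"older with exponents $\{p_i\}$. The same pair of steps (Minkowski inside $\int d\vec{u}$, then multilinear H\"older) is then applied to the $L^{\tilde p}_{rad}((0,r),\rho^{n+\alpha-1}d\rho)$ norm with exponents $\{\tilde{p_i}\}$. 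A change of variables $\rho\mapsto\rho|u_i|$ in each radial factor converts the interval $(0,r)$ into $(0,r/|u_i|)$ and extracts $|u_i|^{(n+\alpha)/\tilde{p_i}}$. A final Minkowski step on the outer $L^q(dr/r)$ level, together with $\lambda=\sum\lambda_i$ and the balance hypothesis $\lambda p = \lambda_i p_i$, redistributes the weight $(r/|u_i|)^{-\lambda_i}$ so that the right-hand side carries the individual norms $\|f_i\|_{LML_{rad}^{\tilde{p_i},\lambda,(q\tilde{p_i})/\tilde{p}}L_{ang}^{p_i}}$ with total constant $C_{\Phi,5}$.

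For the sharpness in (i), I would test against the radial power functions $f_i(x) = |x|^{-(n+\alpha)/\tilde{p_i}+\lambda_i}$, regularized by truncating to a shell $\{\varepsilon<|x|<\varepsilon^{-1}\}$ and passing to the limit as in \cite{An2023,Wei2025}. A direct computation gives $S_\Phi(\vec{f}) = C_{\Phi,5}\prod_i f_i$, and the balance condition $\lambda p = \lambda_i p_i$ is exactly what is needed for the factorization of the mixed radial-angular local Morrey norm to hold identically, so the two-sided inequality collapses to an equality and the operator norm equals $C_{\Phi,5}$. Part (ii) is analogous with $\tilde p = \infty$: the radial norm becomes an essential supremum, the multilinear H\"older splitting moves to the outer $L^q(dr/r)$ level with exponents $\{q_i\}$, the balance condition becomes $\lambda q = \lambda_i q_i$, and the extremals are the simpler powers $f_i(x)=|x|^{-\alpha/p_i+\lambda_i}$, producing the sharp constant $\widetilde{C_{\Phi,5}}$.

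The main obstacle is bookkeeping: tracking the correct H\"older indices at each of the three levels (angular, radial, outer $L^q$) and verifying that the balance conditions are precisely what forces each H\"older step to become an equality on the chosen extremal power functions. The kernel $|\vec{u}|^{-\sum_{i=1}^m n_i}$ is inert under this ansatz because the extremals are radial power functions, so the sharp constant indeed picks up $|\vec{u}|^{-\sum_{i=1}^m n_i}$ rather than $\prod_{i=1}^m |u_i|^{-n_i}$ as in Theorem~\ref{th3}, and no new analytic difficulty is introduced beyond what is already handled there.
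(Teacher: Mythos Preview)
Your proposal is correct and follows essentially the same route as the paper, which proves Theorem~\ref{th5} in a single sentence by pointing back to Theorem~\ref{th3}. Your observation that the kernel $|\vec{u}|^{-\sum n_i}$ is inert under the dilation $\rho\mapsto\rho|u_i|$ is exactly the reason the argument transfers verbatim. One small imprecision: the balance condition $\lambda p=\lambda_i p_i$ is needed only for the sharpness part, not for the upper bound itself; and the paper's extremizers are one-sided truncations $|x|^{\beta_i}\chi_{\{|x|>1\}}$ with an $\epsilon$-perturbed exponent rather than a two-sided shell cutoff, but this is a cosmetic difference in the regularization.
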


\begin{theorem}\label{th6}
Suppose that~$\Phi$ is a nonnegative, locally integrable, radial function.
(i) Let $1 \leq p, p_i,\tilde{p}, \tilde{p_i} < \infty$, $1 \leq q \leq \infty$ satisfy $\frac{1}{p} = \sum\limits_{i=1}^m \frac{1}{p_i}$ and $\frac{1}{\tilde{p}} = \sum\limits_{i=1}^{m} \frac{1}{\tilde{p_i}}$. Assume that $\alpha\in\mathbb{R}$, $\lambda = \sum\limits_{i=1}^m \lambda_i$ for $i = 1, \ldots, m$, $\lambda$ satisfies (1.3), and $\lambda_i$ satisfies (2.1). If
$$
C_{\Phi,6} = \prod_{i=1}^m \omega_{n_i} \int_0^\infty \cdots \int_0^\infty \Phi \left( \frac{\prod\limits_{i=1}^m t_i}{\sqrt{\sum\limits_{i=1}^m t_i^2}} \right) \prod_{i=1}^m \frac{t_i^{\sum\limits_{j=1,j\neq i}^m n_j + \frac{n+\alpha}{\tilde{p_i}}-\lambda_i - 1}}{(\sum\limits_{i=1}^m t_i^2)^{(\sum\limits_{i=1}^m n_i)/2}} d\vec{t} < \infty,
$$
then
$$
\|\widetilde{S_\Phi}(\vec{f})\|_{LML_{rad}^{\tilde{p},\lambda,q}L_{ang}^{p}(\mathbb{R}^n, |x|^{\alpha})} \leq C_{\Phi,6} \prod_{i=1}^m \|f_i\|_{LML_{rad}^{\tilde{p_i},\lambda,{(q\tilde{p_i})}/\tilde{p}}L_{ang}^{p_i}(\mathbb{R}^n, |x|^{\alpha})}.
$$
Moreover, if
$$
\lambda p = \lambda_i p_i, \quad i = 1, \ldots, m,
$$
then
$$
\|\widetilde{S_\Phi}\|_{\prod\limits_{i=1}^m LML_{rad}^{\tilde{p_i},\lambda,{(q\tilde{p_i})}/\tilde{p}}L_{ang}^{p_i}(\mathbb{R}^n, |x|^{\alpha}) \to LML_{rad}^{\tilde{p},\lambda,q}L_{ang}^{p}(\mathbb{R}^n, |x|^{\alpha})} = C_{\Phi,6}.
$$
(ii) Let $1 \leq p, p_i, q, q_i < \infty$ satisfy $\frac{1}{p} = \sum\limits_{i=1}^{m} \frac{1}{p_i}$~and~$\frac{1}{q} = \sum\limits_{i=1}^m \frac{1}{q_i}$. Assume that $\alpha\in\mathbb{R}$, $\lambda, \lambda_i > 0$, and $\lambda = \sum\limits_{i=1}^m \lambda_i$ for $i = 1, \ldots, m$. If
$$
\widetilde{C_{\Phi,6}} = \prod_{i=1}^m \omega_{n_i} \int_0^\infty \cdots \int_0^\infty \Phi \left( \frac{\prod\limits_{i=1}^m t_i}{\sqrt{\sum\limits_{i=1}^m t_i^2}} \right) \prod_{i=1}^m \frac{t_i^{\sum\limits_{j=1,j\neq i}^m n_j + \frac{\alpha}{p_i}-\lambda_i - 1}}{(\sum\limits_{i=1}^m t_i^2)^{(\sum\limits_{i=1}^m n_i)/2}} d\vec{t} < \infty,
$$
then
$$
\|\widetilde{S_\Phi}(\vec{f})\|_{LML_{rad}^{\infty,\lambda,q}L_{ang}^{p}(\mathbb{R}^n, |x|^{\alpha})} \leq \widetilde{C_{\Phi,6}} \prod_{i=1}^m \|f_i\|_{LML_{rad}^{\infty,\lambda_i,q_i}L_{ang}^{p_i}(\mathbb{R}^n, |x|^{\alpha})}.
$$
Moreover, if
$$
\lambda q = \lambda_i q_i, \quad i = 1, \ldots, m,
$$
then
$$
\|\widetilde{S_\Phi}\|_{\prod\limits_{i=1}^m LML_{rad}^{\infty,\lambda_i,q_i}L_{ang}^{p_i}(\mathbb{R}^n, |x|^{\alpha}) \to LML_{rad}^{\infty,\lambda,q}L_{ang}^{p}(\mathbb{R}^n, |x|^{\alpha})} = \widetilde{C_{\Phi,6}}.
$$
\end{theorem}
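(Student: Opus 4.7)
The key observation I would exploit is that since $\Phi$ is radial, $\Phi(x/|\vec{u}|)$ depends on $x$ only through $|x|$, so $\widetilde{S_\Phi}(\vec{f})(x)=F(|x|)$ for some nonnegative radial profile $F$. This immediately collapses the $L^{p}_{\rm ang}(\mathbb{S}^{n-1})$ factor in the target norm into a constant $\omega_n^{1/p}$, reducing the problem to bounding the one-variable profile $F(\rho)$ in an appropriate Morrey-type norm on $(0,\infty)$. My first step is to introduce mixed polar coordinates $u_i=\sigma_i\eta_i$ with $\sigma_i>0$, $\eta_i\in\mathbb{S}^{n_i-1}$, so that $|\vec{u}|=\sqrt{\sum_j\sigma_j^2}$, and apply H\"older's inequality on each sphere $\mathbb{S}^{n_i-1}$ (via $1/p_i+1/p_i'=1$) to extract factors $\omega_{n_i}^{1/p_i'}\|f_i(\sigma_i\,\cdot)\|_{L^{p_i}(\mathbb{S}^{n_i-1})}$. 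This yields a pointwise estimate for $F(\rho)$ as an iterated integral over $(\sigma_1,\dots,\sigma_m)\in(0,\infty)^m$ whose kernel is $\Phi(\rho/\sqrt{\sum\sigma_j^2})(\sum\sigma_j^2)^{-N/2}$ with $N=\sum_i n_i$.

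Next I would perform the substitution $\sigma_i=\rho/t_i$, the multivariate analogue of the dilation used for $\widetilde{\mathcal{H}_\Phi}$ in Theorem \ref{th1}. This factors the $\rho$-dependence entirely into the rescaled arguments of the $f_i$; the Jacobian bookkeeping $\sigma_i^{n_i-1}d\sigma_i=\rho^{n_i}t_i^{-n_i-1}dt_i$ together with the algebraic identity for $\rho/|\vec{u}|$ expressed in $\vec{t}$ produces precisely the kernel and weights appearing in the definition of $C_{\Phi,6}$, in particular the exponent $\sum_{j\ne i}n_j+(n+\alpha)/\tilde{p_i}-\lambda_i-1$ attached to each $t_i$. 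With the $\rho$-dependence now isolated in the rescaled profiles $f_i(\rho t_i^{-1}\,\cdot)$, I would apply Minkowski's integral inequality to move the outer $LML_{rad}^{\tilde p,\lambda,q}L^{p}_{ang}(\mathbb{R}^n,|x|^\alpha)$ norm under the $d\vec{t}$ integral, and then use H\"older with the splitting $1/\tilde p=\sum 1/\tilde{p_i}$ in part (i) (respectively $1/q=\sum 1/q_i$ in part (ii)). The built-in dilation-invariance of the local Morrey-type norm converts each rescaled factor $\|f_i(\rho t_i^{-1}\,\cdot)\|_{LML^{\tilde{p_i},\lambda,(q\tilde{p_i})/\tilde p}_{rad}L^{p_i}_{ang}}$ into $t_i^{\lambda_i-(n+\alpha)/\tilde{p_i}}\|f_i\|_{LML^{\tilde{p_i},\lambda,(q\tilde{p_i})/\tilde p}_{rad}L^{p_i}_{ang}}$ (respectively with exponent $\lambda_i-\alpha/p_i$ in part (ii)), and the compatibility $\lambda p=\lambda_ip_i$ (resp.\ $\lambda q=\lambda_iq_i$) is precisely what aligns the $\lambda_i$-exponents across factors so that the product of individual norms reassembles into the claimed bound with the sharp constant.

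For the reverse inequality establishing sharpness, I would test with radial power functions of the form $f_i(x)=|x|^{\gamma_i}\chi_{B(0,1)}(x)$, with $\gamma_i$ chosen so that each $\|f_i\|_{LML^{\tilde{p_i},\lambda,(q\tilde{p_i})/\tilde p}_{rad}L^{p_i}_{ang}}$ is finite and computable in closed form (e.g.\ $\gamma_i=\lambda_i\tilde{p_i}-(n+\alpha)$ for part (i), with the analogous $\lambda_iq_i-\alpha$ choice for part (ii)). Substituting into $\widetilde{S_\Phi}(\vec{f})$ turns every H\"older and Minkowski step in the forward direction into an equality, so that the ratio $\|\widetilde{S_\Phi}(\vec{f})\|/\prod_i\|f_i\|$ equals exactly $C_{\Phi,6}$, matching the upper bound. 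The main obstacle I anticipate is the substitution and power-counting step: matching the algebraic identity relating $\sqrt{\sum\sigma_j^2}$ in the kernel to the variables $\vec{t}$ in the stated $C_{\Phi,6}$, and verifying that every scaling exponent lines up precisely under the compatibility conditions so that no spurious constant appears. Once this is settled, part (ii) with $\tilde p=\infty$ proceeds by the parallel adjustment used in Theorems \ref{th3}--\ref{th5}, replacing the $L^{\tilde p}_{\rm rad}$ integral by an essential supremum and splitting the $q$-norm rather than the $\tilde p$-norm.
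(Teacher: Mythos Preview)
Your upper-bound strategy is essentially the same as the paper's: polar coordinates in each $u_i$, the substitution $\sigma_i=\rho/t_i$, and then Minkowski plus H\"older with the splittings $1/\tilde p=\sum 1/\tilde{p_i}$ (resp.\ $1/q=\sum 1/q_i$). The one structural difference is that the paper first invokes Lemma~\ref{th8} to reduce to radial $f_i$ before writing $\widetilde{S_\Phi}(f_1,f_2)$ in the form involving $\Phi\bigl(t_1t_2/\sqrt{t_1^2+t_2^2}\bigr)$, whereas you apply H\"older on each $\mathbb{S}^{n_i-1}$ to extract $\|f_i(\sigma_i\,\cdot)\|_{L^{p_i}}$ directly. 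For radial $f_i$ your H\"older step is an equality, so the two routes produce the same constant; your approach simply bypasses the separate lemma.

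The genuine gap is in the sharpness argument. Truncated powers $f_i(x)=|x|^{\gamma_i}\chi_{B(0,1)}(x)$ do \emph{not} turn every Minkowski step into an equality: after the substitution $\sigma_i=\rho/t_i$ one has $f_i(\rho/t_i)=(\rho/t_i)^{\gamma_i}\chi_{\{\rho<t_i\}}$, so the integrand in the $d\vec{t}$-integral carries a $\rho$-dependent support constraint and does not factor as (function of $\vec{t}$)$\times$(function of $\rho$). Minkowski's inequality is then strict, and the ratio $\|\widetilde{S_\Phi}(\vec f)\|/\prod_i\|f_i\|$ will be strictly smaller than $C_{\Phi,6}$. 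The paper's approach (inherited from the proof of Theorem~\ref{th3}) instead takes outward-truncated powers $f_i(x)=|x|^{\beta_i}\chi_{\{|x|>1\}}(x)$ with $\beta_i=\lambda_i-(n+\alpha)/\tilde{p_i}-\epsilon/\tilde{p_i}$, evaluates both norms explicitly via Beta-function identities, bounds the numerator from below by restricting to $|u_i|<\epsilon^{-1}$ and $\rho>\epsilon^{-1}$, and then lets $\epsilon\to 0^+$; no single choice of test function achieves the exact constant, and the limiting family is essential. Your suggested exponent $\gamma_i=\lambda_i\tilde{p_i}-(n+\alpha)$ is also not the relevant critical power, and the truncation on $B(0,1)$ rather than $\{|x|>1\}$ is the wrong direction for the local Morrey-type norm here.
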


We now proceed to the proof of the main theorems.

\begin{proof}[Proof of Theorem \ref{th1}]
Noting that~$\Phi$~is a radial function, we now express the Hausdorff operator $\widetilde{\mathcal{H}_{\Phi}}$ in polar coordinates as follows,
$$
\widetilde{\mathcal{H}_{\Phi}}(f)(x) = \int_{\mathbb{R}^n} \frac{\Phi\left(\frac{x}{|y|}\right)}{|y|^n} f(y) dy = \int_0^\infty \int_{\mathbb{S}^{n-1}} f\left(\frac{|x|}{t} y'\right) d\sigma(y') \frac{\Phi(t)}{t} dt.
$$
(i) Assume that~$1 \leq q \leq \infty$~and~$1 \leq p,\tilde{p} < \infty$. For any~$r > 0$, by Minkowski's integral inequality and H\"{o}lder's inequality, we get that
\begin{align*}
&\left( \int_0^r\left(\int_{\mathbb{S}^{n-1}} |\widetilde{\mathcal{H}_{\Phi}}(f)(\rho\theta)|^p d\sigma(\theta)\right)^{\frac{\tilde{p}}{p}} \rho^{n-1}\rho^\alpha d\rho \right)^{\frac{1}{\tilde{p}}} \\
& =\left( \int_0^r\left(\int_{\mathbb{S}^{n-1}} \left|\int_0^\infty \int_{\mathbb{S}^{n-1}} f\left(\frac{|\rho\theta|}{t} y'\right) d\sigma(y') \frac{\Phi(t)}{t} dt\right|^p d\sigma(\theta)\right)^{\frac{\tilde{p}}{p}} \rho^{n-1}\rho^\alpha d\rho \right)^{\frac{1}{\tilde{p}}} \\
& =\omega_n^{\frac{1}{p}}\left( \int_0^r\left|\int_0^\infty \int_{\mathbb{S}^{n-1}} f\left(\frac{\rho}{t} y'\right) d\sigma(y') \frac{\Phi(t)}{t} dt\right|^{\tilde{p}} \rho^{n-1}\rho^\alpha d\rho \right)^{\frac{1}{\tilde{p}}} \\
&\leq \omega_n^{\frac{1}{p}}\int_0^\infty \left( \int_0^r \left(\int_{\mathbb{S}^{n-1}}\left|f\left(\frac{\rho}{t} y'\right)\right| d\sigma(y')\right)^{\tilde{p}}\rho^{n+\alpha-1}d\rho\right)^{\frac{1}{\tilde{p}}}  \frac{\Phi(t)}{t}dt \\
&\leq \omega_n^{\frac{1}{p}}\omega_n^{\frac{1}{p'}}\int_0^\infty \left( \int_0^r \left(\int_{\mathbb{S}^{n-1}}\left|f\left(\frac{\rho}{t} y'\right)\right|^p d\sigma(y')\right)^{\frac{\tilde{p}}{p}}\rho^{n+\alpha-1}d\rho\right)^{\frac{1}{\tilde{p}}}  \frac{\Phi(t)}{t}dt \\
&\leq \omega_n\int_0^\infty t^{\frac{n+\alpha}{\tilde{p}}} \left( \int_0^{\frac{r}{t}} \left(\int_{\mathbb{S}^{n-1}}\left|f\left(sy'\right)\right|^p d\sigma(y')\right)^{\frac{\tilde{p}}{p}}s^{n+\alpha-1}ds\right)^{\frac{1}{\tilde{p}}}  \frac{\Phi(t)}{t}dt.
\end{align*}

Here and below, $\frac{1}{p} + \frac{1}{p'} = 1$~for~$1 < p < \infty$, and~$p=1$, $p' = \infty$.

We now divide~$q$~into two cases: $q = \infty$~and~$1 \leq q < \infty$.

If~$q = \infty$, then for all $\lambda \geq 0$, the following estimate holds,
\begin{align*}
&\frac{1}{r^{\lambda}} \int_0^\infty \left( \int_0^{\frac{r}{t}} \left(\int_{\mathbb{S}^{n-1}}\left|f\left(sy'\right)\right|^p d\sigma(y')\right)^{\frac{\tilde{p}}{p}}s^{n+\alpha-1}ds\right)^{\frac{1}{\tilde{p}}}  \frac{\Phi(t)}{t}t^{\frac{n+\alpha}{\tilde{p}}}dt \\
&=\int_0^\infty \left( \left(\frac{r}{t}\right)^{-\lambda}\int_0^{\frac{r}{t}} \left(\int_{\mathbb{S}^{n-1}}\left|f\left(sy'\right)\right|^p d\sigma(y')\right)^{\frac{\tilde{p}}{p}}s^{n+\alpha-1}ds\right)^{\frac{1}{\tilde{p}}}  \frac{\Phi(t)}{t}t^{\frac{n+\alpha}{\tilde{p}}-\lambda}dt\\
&\leq \int_0^\infty \frac{\Phi(t)}{t^{\lambda-\frac{n+\alpha}{\tilde{p}}+ 1}} dt \|f\|_{LML_{rad}^{\tilde{p},\lambda,\infty}L_{ang}^{p}(\mathbb{R}^n, |x|^{\alpha})}.
\end{align*}

Thus, taking the supremum over all \( r > 0 \), we obtain the following result,
$$
\|\widetilde{\mathcal{H}_{\Phi}}(f)\|_{LML_{rad}^{\tilde{p},\lambda,\infty}L_{ang}^{p}(\mathbb{R}^n, |x|^{\alpha})} \leq \omega_n \int_0^\infty \frac{\Phi(t)}{t^{\lambda-\frac{n+\alpha}{\tilde{p}}+ 1}} dt \|f\|_{LML_{rad}^{\tilde{p},\lambda,\infty}L_{ang}^{p}(\mathbb{R}^n, |x|^{\alpha})}.
$$

If~$1 \leq q < \infty$, by the Minkowski integral inequality, for any~$\lambda > 0$, we have
\begin{align*}
&\|\widetilde{\mathcal{H}_{\Phi}}(f)\|_{LML_{rad}^{\tilde{p},\lambda,q}L_{ang}^{p}(\mathbb{R}^n, |x|^{\alpha})} \\
& = \left(\int_0^\infty\left(\frac{1}{r^\lambda}\left( \int_0^r\left(\int_{\mathbb{S}^{n-1}} |\widetilde{\mathcal{H}_{\Phi}}(f)(\rho\theta)|^p d\sigma(\theta)\right)^{\frac{\tilde{p}}{p}} \rho^{n-1}\rho^\alpha d\rho \right)^{\frac{1}{\tilde{p}}}\right)^q \frac{dr}{r} \right)^\frac{1}{q}\\
& \leq \omega_n \left(\int_0^\infty\left(\frac{1}{r^{\lambda}} \int_0^\infty \left( \int_0^{\frac{r}{t}} \left(\int_{\mathbb{S}^{n-1}}\left|f\left(sy'\right)\right|^p d\sigma(y')\right)^{\frac{\tilde{p}}{p}}s^{n+\alpha-1}ds\right)^{\frac{1}{\tilde{p}}}  \frac{\Phi(t)}{t}t^{\frac{n+\alpha}{\tilde{p}}}dt \right)^q \frac{dr}{r} \right)^\frac{1}{q}\\
& \leq \omega_n \int_0^\infty\left( \int_0^\infty r^{-q\lambda}\left( \int_0^{\frac{r}{t}} \left(\int_{\mathbb{S}^{n-1}}\left|f\left(sy'\right)\right|^p d\sigma(y')\right)^{\frac{\tilde{p}}{p}}s^{n+\alpha-1}ds\right)^{\frac{q}{\tilde{p}}} \frac{dr}{r} \right)^\frac{1}{q} \frac{\Phi(t)}{t}t^{\frac{n+\alpha}{\tilde{p}}}dt \\
&\leq\omega_n \int_0^\infty \frac{\Phi(t)}{t^{\lambda-\frac{n+\alpha}{\tilde{p}}+ 1}} dt \|f\|_{LML_{rad}^{\tilde{p},\lambda,q}L_{ang}^{p}(\mathbb{R}^n, |x|^{\alpha})}.
\end{align*}

If~$1 \leq q \leq \infty$, $1 \leq p < \infty$~and~$\tilde{p} = \infty$, then it is trivial to replace~$\|f\|_{L_{rad}^{\tilde{p}}L_{ang}^p(B(0,r))}$~\\with~$\|f\|_{L_{rad}^{\infty}L_{ang}^p(B(0,r))}$.

We now establish that the constant $C_{\Phi,2}$ is sharp. Proving this requires that the analysis be split into two cases: $1 \leq q < \infty$ and $q = \infty$.

Case 1: $1 \leq q < \infty$. Let~$1 \leq p,\tilde{p}< \infty$. For a given $\lambda > 0$ and sufficiently small $\epsilon > 0$, we select a real number $\beta$, parameterized by $\epsilon$ (to be specified later), subject to the constraint $0 < \beta + \frac{n+\alpha}{\tilde{p}} < \lambda$. Take
\begin{align*}
f_{0}(x) = |x|^{\beta} \chi_{\{|x|>1\}}(x).\tag{2.2}
\end{align*}
Here and below, $\chi_{E}$ is the characteristic function of a measurable set $E$.

The following estimate can therefore be established,
\begin{align*}
&\|f_{0}\|^q_{LML_{rad}^{\tilde{p},\lambda,q}L_{ang}^{p}(\mathbb{R}^n, |x|^{\alpha})} \\
&= \int_0^\infty r^{-q\lambda - 1} \left( \int_0^r\left(\int_{\mathbb{S}^{n-1}} |f_{0}(\rho\theta)|^p d\sigma(\theta)\right)^{\frac{\tilde{p}}{p}} \rho^{n-1}\rho^\alpha d\rho \right)^{\frac{q}{\tilde{p}}}  dr \\
&= \omega_n^{\frac{q}{p}}\int_1^\infty r^{-q\lambda - 1} \left( \int_1^r \rho^{\beta\tilde{p}+n+\alpha-1} d\rho \right)^{\frac{q}{\tilde{p}}} dr \\
&= \omega_n^{\frac{q}{p}}\left(\frac{1}{\beta\tilde{p}+n+\alpha}\right)^{\frac{q}{\tilde{p}}}\int_1^\infty r^{-q\lambda - 1} \left(r^{\beta\tilde{p}+n+\alpha}-1 \right)^{\frac{q}{\tilde{p}}} dr \\
&= \omega_n^{\frac{q}{p}}\left(\frac{1}{\beta\tilde{p}+n+\alpha}\right)^{\frac{q}{\tilde{p}}}\frac{1}{\beta\tilde{p}+n+\alpha}\int_0^\infty (r+1)^{-\frac{q\lambda}{\beta\tilde{p}+n+\alpha} - 1} r^{\frac{q}{\tilde{p}}}dr \\
&=\omega_n^{\frac{q}{p}}\left(\frac{1}{\beta\tilde{p}+n+\alpha}\right)^{\frac{q}{\tilde{p}}+1}B\left(\frac{q}{\tilde{p}}+1,\frac{q\lambda}{\beta\tilde{p}+n+\alpha}-\frac{q}{\tilde{p}}\right),
\end{align*}
where the standard integral definition of the Beta function has been employed,
$$
B(a,b) = \int_0^1 t^{a-1} (1-t)^{b-1} dt = \int_0^\infty \frac{t^{a-1}}{(1+t)^{a+b}} dt,
$$
for~$a,b > 0$. This implies that~$f_{0} \in LML_{rad}^{\tilde{p},\lambda,q}L_{ang}^{p}(\mathbb{R}^n, |x|^{\alpha})$. We also can get
$$
\widetilde{\mathcal{H}_{\Phi}}(f_{0})(x) = \int_0^\infty \int_{\mathbb{S}^{n-1}} f_{0} \left( \frac{|x|}{t} y' \right) d\sigma(y') \frac{\Phi(t)}{t} dt = \omega_n |x|^{\beta} \int_0^{|x|} \Phi(t) t^{-\beta - 1} dt.
$$

The following estimate holds,
\begin{align*}
&\|\widetilde{\mathcal{H}_{\Phi}}(f_{0})\|^q_{LML_{rad}^{\tilde{p},\lambda,q}L_{ang}^{p}(\mathbb{R}^n, |x|^{\alpha})} \\
& = \int_0^\infty\left(\frac{1}{r^\lambda}\left( \int_0^r\left(\int_{\mathbb{S}^{n-1}} |\widetilde{\mathcal{H}_{\Phi}}(f_0)(\rho\theta)|^p d\sigma(\theta)\right)^{\frac{\tilde{p}}{p}} \rho^{n-1}\rho^\alpha d\rho \right)^{\frac{1}{\tilde{p}}}\right)^q \frac{dr}{r} \\
& =\int_0^\infty r^{-q\lambda-1}\left( \int_0^r\left(\int_{\mathbb{S}^{n-1}} \left|\omega_n |\rho\theta|^{\beta} \int_0^{|\rho\theta|} \Phi(t) t^{-\beta - 1} dt\right|^p d\sigma(\theta)\right)^{\frac{\tilde{p}}{p}} \rho^{n+\alpha-1} d\rho \right)^{\frac{q}{\tilde{p}}} dr \\
& =\omega_n^q \omega_n^{\frac{q}{p}}\int_0^\infty r^{-q\lambda-1}\left( \int_0^r\left( \rho^{\beta} \int_0^{\rho} \Phi(t) t^{-\beta - 1} dt \right)^{\tilde{p}} \rho^{n+\alpha-1} d\rho \right)^{\frac{q}{\tilde{p}}} dr \\
& \geq\omega_n^q \omega_n^{\frac{q}{p}}\int_{\epsilon^{-1}}^\infty r^{-q\lambda-1}\left( \int_{\epsilon^{-1}}^r \rho^{\beta\tilde{p}+n+\alpha-1} \left( \int_0^{\epsilon^{-1}} \Phi(t) t^{-\beta - 1} dt \right)^{\tilde{p}}d\rho \right)^{\frac{q}{\tilde{p}}} dr\\
& = \omega_n^{\frac{q}{p}}\left( \omega_n\int_0^{\epsilon^{-1}} \Phi(t) t^{-\beta - 1} dt \right)^q \int_{\epsilon^{-1}}^\infty r^{-q\lambda-1}\left( \int_{\epsilon^{-1}}^r \rho^{\beta\tilde{p}+n+\alpha-1} d\rho \right)^{\frac{q}{\tilde{p}}} dr.
\end{align*}

Therefore, we can derive
\begin{align*}
&\int_{\epsilon^{-1}}^\infty r^{-q\lambda-1}\left( \int_{\epsilon^{-1}}^r \rho^{\beta\tilde{p}+n+\alpha-1} d\rho \right)^{\frac{q}{\tilde{p}}} dr \\
&=\left(\frac{1}{\beta\tilde{p}+n+\alpha}\right)^{\frac{q}{\tilde{p}}}\int_{\epsilon^{-1}}^\infty r^{-q\lambda-1}\left( r^{\beta\tilde{p}+n+\alpha}-\epsilon^{-(\beta\tilde{p}+n+\alpha)} \right)^{\frac{q}{\tilde{p}}} dr \\
&=\left(\frac{1}{\beta\tilde{p}+n+\alpha}\right)^{\frac{q}{\tilde{p}}}\int_1^\infty t^{-q\lambda-1}\epsilon^{q\lambda+1}\left(\epsilon^{-(\beta\tilde{p}+n+\alpha)} \left(t^{\beta\tilde{p}+n+\alpha}-1\right) \right)^{\frac{q}{\tilde{p}}} dt \\
&=\left(\frac{1}{\beta\tilde{p}+n+\alpha}\right)^{\frac{q}{\tilde{p}}}\epsilon^{-(\beta+\frac{n+\alpha}{\tilde{p}}-\lambda)q} \int_1^\infty t^{-q\lambda-1} \left(t^{\beta\tilde{p}+n+\alpha}-1\right)^{\frac{q}{\tilde{p}}} dt \\
&=\epsilon^{-(\beta+\frac{n+\alpha}{\tilde{p}}-\lambda)q} \left(\frac{1}{\beta\tilde{p}+n+\alpha}\right)^{\frac{q}{\tilde{p}}+1} B\left(\frac{q}{\tilde{p}}+1,\frac{q\lambda}{\beta\tilde{p}+n+\alpha}-\frac{q}{\tilde{p}}\right),
\end{align*}
this leads to the ensuing estimate,
\begin{align*}
&\|\widetilde{\mathcal{H}_{\Phi}}(f_{0})\|^q_{LML_{rad}^{\tilde{p},\lambda,q}L_{ang}^{p}(\mathbb{R}^n, |x|^{\alpha})} \\
& \geq  \omega_n^{\frac{q}{p}}\left( \omega_n\int_0^{\epsilon^{-1}} \Phi(t) t^{-\beta - 1} dt \right)^q \int_{\epsilon^{-1}}^\infty r^{-q\lambda-1}\left( \int_{\epsilon^{-1}}^r \rho^{\beta\tilde{p}+n+\alpha-1} d\rho \right)^{\frac{q}{\tilde{p}}} dr\\
& = \omega_n^{\frac{q}{p}}\left( \omega_n\int_0^{\epsilon^{-1}} \Phi(t) t^{-\beta - 1} dt \right)^q \epsilon^{-(\beta+\frac{n+\alpha}{\tilde{p}}-\lambda)q} \left(\frac{1}{\beta\tilde{p}+n+\alpha}\right)^{\frac{q}{\tilde{p}}+1} \\
&\quad\times B\left(\frac{q}{\tilde{p}}+1,\frac{q\lambda}{m\tilde{p}+n+\alpha}-\frac{q}{\tilde{p}}\right)\\
&= \epsilon^{-(\beta+\frac{n+\alpha}{\tilde{p}}-\lambda)q}\left( \omega_n\int_0^{\epsilon^{-1}} \Phi(t) t^{-\beta - 1} dt \right)^q \|f_{0}\|^q_{LML_{rad}^{\tilde{p},\lambda,q}L_{ang}^{p}(\mathbb{R}^n, |x|^{\alpha})}.
\end{align*}

If~$p = \infty$, the same estimate can be derived by selecting
\begin{align*}
f_{\epsilon}(x) = |x|^{\lambda - \epsilon} \chi_{\{|x|>1\}}(x) \tag{2.3}
\end{align*}
for sufficiently small~$\epsilon > 0$.

Case 2: $q = \infty$. We shall construct different functions according to the cases~$\lambda > 0$~and~$\lambda = 0$.

Let~$\lambda > 0$~and~$1 \leq p < \infty$. We take~$f_{0}$~as in (2.2). By the condition~$0 < \beta + \frac{n+\alpha}{\tilde{p}} < \lambda$, a direct calculation shows that
\begin{align*}
&\|f_{0}\|_{LML_{rad}^{\tilde{p},\lambda,\infty}L_{ang}^{p}(\mathbb{R}^n, |x|^{\alpha})} \\
&= \sup_{r>0} \frac{1}{r^{\lambda}} \left( \int_0^r\left(\int_{\mathbb{S}^{n-1}} |f_0(\rho\theta)|^p d\sigma(\theta)\right)^{\frac{\tilde{p}}{p}} \rho^{n-1}\rho^\alpha d\rho \right)^{\frac{1}{\tilde{p}}} \\
&=\omega_n^{\frac{1}{p}}\sup_{r>1} \frac{1}{r^{\lambda}}\left(\int_1^r \rho^{\beta\tilde{p}+n+\alpha-1} d\rho\right)^{\frac{1}{\tilde{p}}}\\
&= \omega_n^{\frac{1}{p}}\left(\frac{1}{\beta\tilde{p}+n+\alpha}\right)^{\frac{1}{\tilde{p}}} \sup_{r>1} \frac{1}{r^{\lambda}} \left(r^{\beta\tilde{p}+n+\alpha} - 1\right)^{\frac{1}{\tilde{p}}} \\
&= \omega_n^{\frac{1}{p}}\left(\frac{1}{\beta\tilde{p}+n+\alpha}\right)^{\frac{1}{\tilde{p}}} \left( \frac{\beta + \frac{n+\alpha}{\tilde{p}}}{\lambda-(\beta+\frac{n+\alpha}{\tilde{p}})} \right)^{\frac{1}{\tilde{p}}} \left( \frac{\lambda-(\beta+\frac{n+\alpha}{\tilde{p}})}{\lambda} \right)^{\frac{\lambda}{\beta\tilde{p}+n+\alpha}},
\end{align*}
which yields~$f_{0} \in LML_{rad}^{\tilde{p},\lambda,\infty}L_{ang}^{p}(\mathbb{R}^n, |x|^{\alpha})$. Furthermore, we obtain
\begin{align*}
&\|\widetilde{\mathcal{H}_{\Phi}}(f_{0})\|_{LML_{rad}^{\tilde{p},\lambda,\infty}L_{ang}^{p}(\mathbb{R}^n, |x|^{\alpha})} \\
& = \sup_{r>0}\frac{1}{r^\lambda}\left( \int_0^r\left(\int_{\mathbb{S}^{n-1}} |\widetilde{\mathcal{H}_{\Phi}}(f_0)(\rho\theta)|^p d\sigma(\theta)\right)^{\frac{\tilde{p}}{p}} \rho^{n-1}\rho^\alpha d\rho \right)^{\frac{1}{\tilde{p}}} \\
& = \sup_{r>0}\frac{1}{r^\lambda}\left( \int_0^r\left(\int_{\mathbb{S}^{n-1}} \left|\omega_n |\rho\theta|^{\beta} \int_0^{|\rho\theta|} \Phi(t) t^{-\beta - 1} dt\right|^p d\sigma(\theta)\right)^{\frac{\tilde{p}}{p}} \rho^{n+\alpha-1} d\rho \right)^{\frac{1}{\tilde{p}}} \\
& = \omega_n^{1+\frac{1}{p}}\sup_{r>0}\frac{1}{r^\lambda}\left( \int_0^r  \left|\int_0^{\rho} \Phi(t) t^{-\beta - 1} dt \right|^{\tilde{p}} \rho^{\beta\tilde{p}+\alpha+n-1} d\rho \right)^{\frac{1}{\tilde{p}}} \\
& \geq  \omega_n^{1+\frac{1}{p}}\sup_{r>\epsilon^{-1}}\frac{1}{r^\lambda}\left( \int_{\epsilon^{-1}}^r \left|\int_0^{\epsilon^{-1}} \Phi(t) t^{-\beta - 1} dt \right|^{\tilde{p}} \rho^{\beta\tilde{p}+\alpha+n-1} d\rho \right)^{\frac{1}{\tilde{p}}} \\
& =  \omega_n^{1+\frac{1}{p}} \int_0^{\epsilon^{-1}} \Phi(t) t^{-\beta - 1} dt \sup_{r > \epsilon^{-1}} \frac{1}{r^{\lambda}} \left( \int_{\epsilon^{-1}}^r \rho^{\beta\tilde{p}+\alpha+n-1} d\rho \right)^{\frac{1}{\tilde{p}}}\\
& =\omega_n^{1+\frac{1}{p}} \left(\frac{1}{\beta\tilde{p}+\alpha+n}\right)^{\frac{1}{\tilde{p}}} \int_0^{\epsilon^{-1}} \Phi(t) t^{-\beta - 1} dt\sup_{r > \epsilon^{-1}} \frac{1}{r^{\lambda}} \left( r^{\beta\tilde{p}+\alpha+n}-\epsilon^{-(\beta\tilde{p}+\alpha+n)} \right)^{\frac{1}{\tilde{p}}}\\
& =\epsilon^{\lambda-\beta-\frac{\alpha+n}{\tilde{p}}}\omega_n^{1+\frac{1}{p}} \left(\frac{1}{\beta\tilde{p}+\alpha+n}\right)^{\frac{1}{\tilde{p}}} \int_0^{\epsilon^{-1}} \Phi(t) t^{-\beta - 1} dt\sup_{t >1} \frac{1}{t^{\lambda}} \left( t^{\beta\tilde{p}+\alpha+n}-1 \right)^{\frac{1}{\tilde{p}}}\\
& = \epsilon^{\lambda-\beta-\frac{\alpha+n}{\tilde{p}}}\omega_n \int_0^{\epsilon^{-1}} \Phi(t) t^{-\beta - 1} dt\|f_{\epsilon}\|_{LML_{rad}^{\tilde{p},\lambda,\infty}L_{ang}^{p}(\mathbb{R}^n, |x|^{\alpha})}.
\end{align*}

For~$\lambda > 0$~and~$\tilde{p} = \infty$, we can take~$f_{\epsilon}$~as in (2.3).

For the case~$\lambda = 0$, notice that~$LML_{rad}^{\tilde{p},0,\infty}L_{ang}^{p} = L_{rad}^{\tilde{p}}L_{ang}^{p}$~when~$1 \leq p < \infty$, and~$LML_{rad}^{\infty,0,\infty}L_{ang}^{\infty} = L^{\infty}$. For sufficiently small ~$\epsilon > 0$, we take
$$
f_{\epsilon}(x) = |x|^{-\frac{n}{\tilde{p}} - \epsilon} \chi_{\{|x|>1\}}(x) \quad \text{for } 1 \leq p,\tilde{p} < \infty,
$$
and
$$
f_{\epsilon}(x) = |x|^{-\epsilon} \chi_{\{|x|>1\}}(x) \quad \text{for } \tilde{p} = \infty,
$$
respectively.

Collecting the results from Cases 1 and 2, we define~$\beta = \lambda - \frac{n+\alpha}{\tilde{p}} - \epsilon$~for~$1 \leq \tilde{p} \leq \infty$~to arrive at the estimate,
\begin{align*}
&\|\widetilde{\mathcal{H}_{\Phi}}\|_{LML_{rad}^{\tilde{p},\lambda,q}L_{ang}^{p}(\mathbb{R}^n, |x|^{\alpha}) \to LML_{rad}^{\tilde{p},\lambda,q}L_{ang}^{p}(\mathbb{R}^n, |x|^{\alpha}) } \\
&\geq \frac{\|\widetilde{\mathcal{H}_{\Phi}}(f_{0})\|_{LML_{rad}^{\tilde{p},\lambda,q}L_{ang}^{p}(\mathbb{R}^n, |x|^{\alpha})}}{\|f_{0}\|_{LML_{rad}^{\tilde{p},\lambda,q}L_{ang}^{p}(\mathbb{R}^n, |x|^{\alpha})}} \\
&\geq \epsilon^{\epsilon} \omega_n \int_0^{\epsilon^{-1}} \Phi(t)t^{-\beta-1} dt.
\end{align*}

Letting~$\epsilon \to 0^+$, we obtain the desired result.

(ii) Assume that~$0 < p < 1$~and~$0 < q < 1$. For any~$r > 0$, by Minkowski's integral inequality and H\"{o}lder's inequality for~$0 < p < 1$, we deduce that
\begin{align*}
&\left( \int_0^r\left(\int_{\mathbb{S}^{n-1}} |\widetilde{\mathcal{H}_{\Phi}}(f)(\rho\theta)|^p d\sigma(\theta)\right)^{\frac{\tilde{p}}{p}} \rho^{n-1}\rho^\alpha d\rho \right)^{\frac{1}{\tilde{p}}} \\
&=\left( \int_0^r\left(\int_{\mathbb{S}^{n-1}} \left(\int_0^\infty \int_{\mathbb{S}^{n-1}} f\left(\frac{|\rho\theta|}{t} y'\right) d\sigma(y') \frac{\Phi(t)}{t} dt\right)^p d\sigma(\theta)\right)^{\frac{\tilde{p}}{p}} \rho^{n-1}\rho^\alpha d\rho \right)^{\frac{1}{\tilde{p}}} \\
&=\omega_n^{\frac{1}{p}}\left( \int_0^r \left(\int_0^\infty \int_{\mathbb{S}^{n-1}} f\left(\frac{\rho}{t} y'\right) d\sigma(y') \frac{\Phi(t)}{t} dt\right)^{\tilde{p}}\rho^{n+\alpha-1}d\rho \right)^{\frac{1}{\tilde{p}}} \\
&\geq\omega_n^{\frac{1}{p}}\int_0^\infty \left(\int_0^r \left( \int_{\mathbb{S}^{n-1}} f\left(\frac{\rho}{t} y'\right) d\sigma(y') \right)^{\tilde{p}}\rho^{n+\alpha-1}d\rho \right)^{\frac{1}{\tilde{p}}}\frac{\Phi(t)}{t} dt\\
&=\omega_n^{\frac{1}{p}}\int_0^\infty \left(\int_0^{\frac{r}{t}} \left( \int_{\mathbb{S}^{n-1}} f\left(\tilde{\rho}y'\right) d\sigma(y') \right)^{\tilde{p}}\tilde{\rho}^{n+\alpha-1}d\tilde{\rho} \right)^{\frac{1}{\tilde{p}}}\frac{\Phi(t)}{t} t^{\frac{n+\alpha}{\tilde{p}}}dt \qquad (\tilde{\rho}=\frac{\rho}{t})\\
&\geq\omega_n^{\frac{1}{p}}\omega_n^{\frac{1}{p'}}\int_0^\infty \left(\int_0^{\frac{r}{t}} \left( \int_{\mathbb{S}^{n-1}}  \left(f\left(\tilde{\rho}y'\right)\right)^p d\sigma(y') \right)^{\frac{\tilde{p}}{p}}\tilde{\rho}^{n+\alpha-1}d\tilde{\rho} \right)^{\frac{1}{\tilde{p}}}\frac{\Phi(t)}{t} t^{\frac{n+\alpha}{\tilde{p}}}dt.
\end{align*}

Since~$0 < q < 1$, using Minkowski's integral inequality again,  we arrive at
\begin{align*}
&\|\widetilde{\mathcal{H}_{\Phi}}(f)\|_{LML_{rad}^{\tilde{p},\lambda,q}L_{ang}^{p}(\mathbb{R}^n, |x|^{\alpha})} \\
&\geq \omega_n\left(\int_0^\infty r^{-q\lambda}\left(\int_0^\infty \left(\int_0^{\frac{r}{t}} \left( \int_{\mathbb{S}^{n-1}}  \left(f\left(\tilde{\rho}y'\right)\right)^p d\sigma(y') \right)^{\frac{\tilde{p}}{p}}\tilde{\rho}^{n+\alpha-1}d\tilde{\rho} \right)^{\frac{1}{\tilde{p}}}\frac{\Phi(t)}{t} t^{\frac{n+\alpha}{\tilde{p}}}dt \right)^q \frac{dr}{r} \right)^{\frac{1}{q}}\\
&\geq \omega_n\int_0^\infty\left(\int_0^\infty r^{-q\lambda} \left(\int_0^{\frac{r}{t}} \left( \int_{\mathbb{S}^{n-1}}  \left(f\left(\tilde{\rho}y'\right)\right)^p d\sigma(y') \right)^{\frac{\tilde{p}}{p}}\tilde{\rho}^{n+\alpha-1}d\tilde{\rho} \right)^{\frac{q}{\tilde{p}}} \frac{dr}{r}\right)^{\frac{1}{q}} \frac{\Phi(t)}{t}t^{\frac{n+\alpha}{\tilde{p}}}dt \\
&=\omega_n \int_0^\infty \frac{\Phi(t)}{t^{\lambda-\frac{n+\alpha}{\tilde{p}}+ 1}} dt \|f\|_{LML_{rad}^{\tilde{p},\lambda,q}L_{ang}^{p}(\mathbb{R}^n, |x|^{\alpha})}.
\end{align*}

The proof of Theorem \ref{th1} is completed.
\end{proof}

\begin{proof}[Proof of Theorem \ref{th2}]
The proof is similar to that of Theorem \ref{th1}. If~$\lambda > 0$, then for sufficiently small~$\epsilon > 0$, we take the radial function
$$
f_{\epsilon}(x) = |x|^{\lambda - \frac{n}{\tilde{p}} - \epsilon} \chi_{\{|x| > 1\}}(x) \quad \text{for } 1 \leq p,\tilde{p} < \infty,
$$
and
$$
f_{\epsilon}(x) = |x|^{\lambda - \epsilon} \chi_{\{|x| > 1\}}(x) \quad \text{for } \tilde{p} = \infty,
$$
to establish the sharp bound. If~$\lambda = 0$, the function defined in Theorem \ref{th1} can be adopted here without change.
\end{proof}

In the subsequent analysis concerning multilinear Hausdorff operators, we restrict our attention to the bilinear case (\(m = 2\)) for clarity of exposition. The arguments presented extend naturally to the general multilinear setting \(m > 2\).

\begin{proof}[Proof of Theorem \ref{th3}]
(i) For any~$r > 0$, by Minkowski's integral inequality and H\"{o}lder's inequality with~$\frac{1}{p} = \frac{1}{p_1} + \frac{1}{p_2}$~and~$\frac{1}{\tilde{p}} = \frac{1}{\tilde{p_1}} + \frac{1}{\tilde{p_2}}$, we get that
\begin{align*}
&\left( \int_0^r\left(\int_{\mathbb{S}^{n-1}} |R_{\Phi}(f_1, f_2)(\rho\theta)|^p d\sigma(\theta)\right)^{\frac{\tilde{p}}{p}}\rho^{n-1}\rho^{\alpha} d\rho\right)^{\frac{1}{\tilde{p}}}\\
&= \left( \int_0^r\left(\int_{\mathbb{S}^{n-1}}\left| \int_{\mathbb{R}^{n_2}} \int_{\mathbb{R}^{n_1}} \frac{\Phi(u_1, u_2)}{|u_1|^{n_1} |u_2|^{n_2}} \prod_{i=1}^2 f_i \left( \frac{\rho\theta}{|u_i|} \right) du_1 du_2 \right|^p d\sigma(\theta)\right)^{\frac{\tilde{p}}{p}}\rho^{n+\alpha-1} d\rho\right)^{\frac{1}{\tilde{p}}}\\
&\leq \int_{\mathbb{R}^{n_2}} \int_{\mathbb{R}^{n_1}}\left( \int_0^r\left(\int_{\mathbb{S}^{n-1}}\left| \prod_{i=1}^2 f_i \left( \frac{\rho\theta}{|u_i|} \right) \right|^p d\sigma(\theta)\right)^{\frac{\tilde{p}}{p}}\rho^{n+\alpha-1} d\rho\right)^{\frac{1}{\tilde{p}}} \frac{\Phi(u_1, u_2)}{|u_1|^{n_1} |u_2|^{n_2}} du_1 du_2 \\
&\leq \int_{\mathbb{R}^{n_2}} \int_{\mathbb{R}^{n_1}} \prod_{i=1}^2 \left( \int_0^r\left(\int_{\mathbb{S}^{n-1}}\left|  f_i \left( \frac{\rho\theta}{|u_i|} \right) \right|^{p_i} d\sigma(\theta)\right)^{\frac{\tilde{p_i}}{p_i}}\rho^{n+\alpha-1} d\rho\right)^{\frac{1}{\tilde{p_i}}} \frac{\Phi(u_1, u_2)}{|u_1|^{n_1} |u_2|^{n_2}} du_1 du_2 \\
&= \int_{\mathbb{R}^{n_2}} \int_{\mathbb{R}^{n_1}} \prod_{i=1}^2 \left( \int_0^{\frac{r}{|u_i|}}\left(\int_{\mathbb{S}^{n-1}}\left|  f_i \left(y_i\theta \right) \right|^{p_i} d\sigma(\theta)\right)^{\frac{\tilde{p_i}}{p_i}}{y_i}^{n+\alpha-1} d{y_i}\right)^{\frac{1}{\tilde{p_i}}}  \frac{\Phi(u_1, u_2)}{|u_1|^{n_1} |u_2|^{n_2}}\\
&\quad\times|u_1|^{\frac{n+\alpha}{\tilde{p_1}}}|u_2|^{\frac{n+\alpha}{\tilde{p_2}}} du_1 du_2.
\end{align*}

If $q = \infty$, then we make the following estimates,
\begin{align*}
&\frac{1}{r^{\lambda}} \int_{\mathbb{R}^{n_2}} \int_{\mathbb{R}^{n_1}} \prod_{i=1}^2 \left( \int_0^{\frac{r}{|u_i|}}\left(\int_{\mathbb{S}^{n-1}}\left|  f_i \left(y_i\theta \right) \right|^{p_i} d\sigma(\theta)\right)^{\frac{\tilde{p_i}}{p_i}}{y_i}^{n+\alpha-1} d{y_i}\right)^{\frac{1}{\tilde{p_i}}}  \frac{\Phi(u_1, u_2)}{|u_1|^{n_1} |u_2|^{n_2}} \\
&\quad\times|u_1|^{\frac{n+\alpha}{\tilde{p_1}}}|u_2|^{\frac{n+\alpha}{\tilde{p_2}}} du_1 du_2\\
& = \int_{\mathbb{R}^{n_2}} \int_{\mathbb{R}^{n_1}} \prod_{i=1}^2 \frac{1}{(\frac{r}{u_i})^{\lambda_i}} \left( \int_0^{\frac{r}{|u_i|}}\left(\int_{\mathbb{S}^{n-1}}\left|  f_i \left(y_i\theta \right) \right|^{p_i} d\sigma(\theta)\right)^{\frac{\tilde{p_i}}{p_i}}{y_i}^{n+\alpha-1} d{y_i}\right)^{\frac{1}{\tilde{p_i}}}  \frac{\Phi(u_1, u_2)}{|u_1|^{n_1} |u_2|^{n_2}} \\
&\quad\times|u_1|^{\frac{n+\alpha}{\tilde{p_1}}-\lambda_1}|u_2|^{\frac{n+\alpha}{\tilde{p_2}}-\lambda_2} du_1 du_2\\
& \leq \int_{\mathbb{R}^{n_2}} \int_{\mathbb{R}^{n_1}} \frac{\Phi(u_1, u_2)}{|u_1|^{n_1} |u_2|^{n_2}} |u_1|^{\frac{n+\alpha}{\tilde{p_1}} - \lambda_1} |u_2|^{\frac{n+\alpha}{\tilde{p_2}} - \lambda_2} du_1 du_2 \prod_{i=1}^2 \|f_i\|_{LML_{rad}^{\tilde{p_i},\lambda,\infty}L_{ang}^{p_i}(\mathbb{R}^n, |x|^{\alpha})}.
\end{align*}

Hence, we deduce that
\begin{align*}
&\|R_{\Phi}(f_1, f_2)\|_{LML_{rad}^{\tilde{p},\lambda,\infty}L_{ang}^{p}(\mathbb{R}^n, |x|^{\alpha})} \\
&\leq \int_{\mathbb{R}^{n_2}} \int_{\mathbb{R}^{n_1}} \frac{\Phi(u_1, u_2)}{|u_1|^{n_1} |u_2|^{n_2}} |u_1|^{\frac{n+\alpha}{\tilde{p_1}} - \lambda_1} |u_2|^{\frac{n+\alpha}{\tilde{p_2}} - \lambda_2} du_1 du_2 \prod_{i=1}^2 \|f_i\|_{LML_{rad}^{\tilde{p_i},\lambda,\infty}L_{ang}^{p_i}(\mathbb{R}^n, |x|^{\alpha})}.
\end{align*}

If~$1 \leq q < \infty$, using Minkowski's integral inequality and H\"{o}lder's inequality, we obtain
\begin{align*}
&\|R_{\Phi}(f_1, f_2)\|_{LML_{rad}^{\tilde{p},\lambda,q}L_{ang}^{p}(\mathbb{R}^n, |x|^{\alpha})} \\
&= \left( \int_0^\infty r^{-q\lambda} \left( \int_0^r\left(\int_{\mathbb{S}^{n-1}} |R_{\Phi}(f_1, f_2)(\rho\theta)|^p d\sigma(\theta)\right)^{\frac{\tilde{p}}{p}}\rho^{n-1}\rho^{\alpha} d\rho\right)^{\frac{q}{\tilde{p}}} \frac{dr}{r} \right)^{\frac{1}{q}} \\
& \leq \Bigg( \int_0^\infty r^{-q(\lambda_1 + \lambda_2)} \Bigg( \int_{\mathbb{R}^{n_2}} \int_{\mathbb{R}^{n_1}} \prod_{i=1}^2 \Bigg( \int_0^{\frac{r}{|u_i|}}\left(\int_{\mathbb{S}^{n-1}}\left|  f_i \left(y_i\theta \right) \right|^{p_i} d\sigma(\theta)\right)^{\frac{\tilde{p_i}}{p_i}}{y_i}^{n+\alpha-1} d{y_i}\Bigg)^{\frac{1}{\tilde{p_i}}}\\
&\quad\times \frac{\Phi(u_1, u_2)}{|u_1|^{n_1} |u_2|^{n_2}} |u_1|^{\frac{n+\alpha}{\tilde{p_1}}} |u_2|^{\frac{n+\alpha}{\tilde{p_2}}} du_1 du_2 \Bigg)^{q} \frac{dr}{r} \Bigg)^{\frac{1}{q}} \\
& \leq \int_{\mathbb{R}^{n_2}} \int_{\mathbb{R}^{n_1}} \Bigg(\int_0^\infty\prod_{i=1}^2 r^{-q\lambda_i} \Bigg( \int_0^{\frac{r}{|u_i|}}\left(\int_{\mathbb{S}^{n-1}}\left|  f_i \left(y_i\theta \right) \right|^{p_i} d\sigma(\theta)\right)^{\frac{\tilde{p_i}}{p_i}}{y_i}^{n+\alpha-1} d{y_i}\Bigg)^{\frac{q}{\tilde{p_i}}}\frac{dr}{r} \Bigg)^{\frac{1}{q}} \\
&\quad\times \frac{\Phi(u_1, u_2)}{|u_1|^{n_1} |u_2|^{n_2}} |u_1|^{\frac{n+\alpha}{\tilde{p_1}}} |u_2|^{\frac{n+\alpha}{\tilde{p_2}}} du_1 du_2 \\
& \leq \int_{\mathbb{R}^{n_2}} \int_{\mathbb{R}^{n_1}} \prod_{i=1}^2 \Bigg(\int_0^\infty r^{-\frac{q\tilde{p_i}\lambda_i}{\tilde{p}}} \Bigg[\Bigg( \int_0^{\frac{r}{|u_i|}}\left(\int_{\mathbb{S}^{n-1}}\left|  f_i \left(y_i\theta \right) \right|^{p_i} d\sigma(\theta)\right)^{\frac{\tilde{p_i}}{p_i}}{y_i}^{n+\alpha-1} d{y_i}\Bigg)^{\frac{1}{\tilde{p_i}}} \Bigg]^{\frac{q\tilde{p_i}}{\tilde{p}}} \\
&\quad\frac{dr}{r} \Bigg)^{\frac{\tilde{p}}{q}\frac{1}{\tilde{p_i}}}\times \frac{\Phi(u_1, u_2)}{|u_1|^{n_1} |u_2|^{n_2}} |u_1|^{\frac{n+\alpha}{\tilde{p_1}}} |u_2|^{\frac{n+\alpha}{\tilde{p_2}}} du_1 du_2\\
& \leq \int_{\mathbb{R}^{n_2}} \int_{\mathbb{R}^{n_1}} \prod_{i=1}^2 \Bigg(\int_0^\infty \left(\frac{r}{|u_i|}\right)^{-\frac{q\tilde{p_i}\lambda_i}{\tilde{p}}} \Bigg[\Bigg( \int_0^{\frac{r}{|u_i|}}\left(\int_{\mathbb{S}^{n-1}}\left|  f_i \left(y_i\theta \right) \right|^{p_i} d\sigma(\theta)\right)^{\frac{\tilde{p_i}}{p_i}}{y_i}^{n+\alpha-1} d{y_i}\Bigg)^{\frac{1}{\tilde{p_i}}} \Bigg]^{\frac{q\tilde{p_i}}{\tilde{p}}} \\
&\quad\times\frac{dr}{r} \Bigg)^{\frac{\tilde{p}}{q}\frac{1}{\tilde{p_i}}} \frac{\Phi(u_1, u_2)}{|u_1|^{n_1} |u_2|^{n_2}} |u_1|^{\frac{n+\alpha}{\tilde{p_1}}-\lambda_1} |u_2|^{\frac{n+\alpha}{\tilde{p_2}}-\lambda_2} du_1 du_2\\
& \leq \int_{\mathbb{R}^{n_2}} \int_{\mathbb{R}^{n_1}}\frac{\Phi(u_1, u_2)}{|u_1|^{n_1} |u_2|^{n_2}} |u_1|^{\frac{n+\alpha}{\tilde{p_1}}-\lambda_1} |u_2|^{\frac{n+\alpha}{\tilde{p_2}}-\lambda_2} du_1 du_2 \prod_{i=1}^2\|f_i\|_{LML_{rad}^{\tilde{p_i},\lambda,(q\tilde{p_i})/{\tilde{p}}}L_{ang}^{p_i}(\mathbb{R}^n, |x|^{\alpha})}.
\end{align*}

Next, we demonstrate that the constant $C_{\Phi,3}$ corresponds to the operator norm of $R_{\Phi}$. Following the previous approach, the analysis can be divided into the cases $1 \leq q < \infty$ and $q = \infty$.

We begin by addressing the case~$1 \leq q < \infty$. For an arbitrarily small~$\epsilon > 0$, we select two real numbers~$\beta_1$~and~$\beta_2$, which depend on~$\epsilon$~and will be specified later, such that~$0 < \beta_i + \frac{n+\alpha}{\tilde{p_i}} < \lambda_i$~ for~$i = 1, 2$. For ~$x \in \mathbb{R}^n$, taking
$$
f_{i}(x) = |x|^{\beta_i} \chi_{\{|x|>1\}}(x), \quad i = 1, 2.
$$

By computations similar to the previous ones, it follows that
\begin{align*}
&\|f_{i}\|^{\frac{q\tilde{p_i}}{\tilde{p}}}_{LML_{rad}^{\tilde{p_i},\lambda_i,(q\tilde{p_i})/{\tilde{p}}}L_{ang}^{p_i}(\mathbb{R}^n, |x|^{\alpha})}\\
&= \int_0^\infty  r^{-\frac{q\tilde{p_i}\lambda_i}{\tilde{p}}}\Bigg[\Bigg( \int_0^{r}\left(\int_{\mathbb{S}^{n-1}}\left|  f_i \left(y_i\theta \right) \right|^{p_i} d\sigma(\theta)\right)^{\frac{\tilde{p_i}}{p_i}}{y_i}^{n+\alpha-1} d{y_i}\Bigg)^{\frac{1}{\tilde{p_i}}} \Bigg]^{\frac{q\tilde{p_i}}{\tilde{p}}}\frac{dr}{r}\\
&= \int_1^\infty  r^{-\frac{q\tilde{p_i}\lambda_i}{\tilde{p}}-1}\Bigg( \int_1^{r}\left(\int_{\mathbb{S}^{n-1}}\left|y_i\theta\right|^{\beta_ip_i} d\sigma(\theta)\right)^{\frac{\tilde{p_i}}{p_i}}{y_i}^{n+\alpha-1} d{y_i}\Bigg)^{\frac{q}{\tilde{p}}}dr\\
&=\omega_n^{\frac{q\tilde{p_i}}{p_i\tilde{p}}} \int_1^\infty  r^{-\frac{q\tilde{p_i}\lambda_i}{\tilde{p}}-1}\Bigg( \int_1^{r}\left|y_i\right|^{\beta_i\tilde{p_i}+n+\alpha-1}d{y_i}\Bigg)^{\frac{q}{\tilde{p}}}dr\\
&=\omega_n^{\frac{q\tilde{p_i}}{p_i\tilde{p}}}\left(\frac{1}{\beta_i\tilde{p_i}+n+\alpha}\right)^{\frac{q}{\tilde{p}}}  \int_1^\infty  r^{-\frac{q\tilde{p_i}\lambda_i}{\tilde{p}}-1}\Bigg( r^{\beta_i\tilde{p_i}+n+\alpha}-1\Bigg)^{\frac{q}{\tilde{p}}}dr\\
&=\omega_n^{\frac{q\tilde{p_i}}{p_i\tilde{p}}}\left(\frac{1}{\beta_i\tilde{p_i}+n+\alpha}\right)^{\frac{q}{\tilde{p}}+1}  \int_0^\infty  (r+1)^{-(\frac{q}{\tilde{p}}(\frac{\tilde{p_i}\lambda_i}{\beta_i\tilde{p_i}+n+\alpha})+1)} r^{\frac{q}{\tilde{p}}}dr\\
&=\omega_n^{\frac{q\tilde{p_i}}{p_i\tilde{p}}}\left(\frac{1}{\beta_i\tilde{p_i}+n+\alpha}\right)^{\frac{q}{\tilde{p}}+1}  B\left(\frac{q}{\tilde{p}}+1, \frac{q}{\tilde{p}}(\frac{\tilde{p_i}\lambda_i}{\beta_i\tilde{p_i}+n+\alpha}-1)\right).
\end{align*}

Employing the reasoning established in the proof of Theorem \ref{th2}, we obtain
\begin{align*}
&\|R_{\Phi}(f_{1}, f_{2})\|^q_{LML_{rad}^{\tilde{p},\lambda,q}L_{ang}^{p}(\mathbb{R}^n, |x|^{\alpha})} \\
&=\int_0^\infty r^{-q\lambda} \left( \int_0^r\left(\int_{\mathbb{S}^{n-1}} |R_{\Phi}(f_1, f_2)(\rho\theta)|^p d\sigma(\theta)\right)^{\frac{\tilde{p}}{p}}\rho^{n-1}\rho^{\alpha} d\rho\right)^{\frac{q}{\tilde{p}}} \frac{dr}{r}  \\
&=\int_0^\infty r^{-q\lambda} \Bigg( \int_0^r\Bigg(\int_{\mathbb{S}^{n-1}} \Bigg| \int_{\mathbb{R}^{n_2}} \int_{\mathbb{R}^{n_1}} \frac{\Phi(u_1, u_2)}{|u_1|^{n_1} |u_2|^{n_2}} \prod_{i=1}^2 f_{i} \Bigg( \frac{\rho\theta}{|u_i|} \Bigg) du_1 du_2 \Bigg|^p d\sigma(\theta)\Bigg)^{\frac{\tilde{p}}{p}}\\
&\quad\times\rho^{n-1}\rho^{\alpha} d\rho\Bigg)^{\frac{q}{\tilde{p}}} \frac{dr}{r}  \\
&=\int_0^\infty r^{-q\lambda} \Bigg( \int_0^r\Bigg(\int_{\mathbb{S}^{n-1}} \Bigg| \int_{|u_2|<\rho} \int_{|u_1|<\rho} \frac{\Phi(u_1, u_2)}{|u_1|^{n_1+\beta_1} |u_2|^{n_2+\beta_2}}  du_1 du_2 \Bigg|^p d\sigma(\theta)\Bigg)^{\frac{\tilde{p}}{p}}\\
&\quad\times\rho^{(\beta_1+\beta_2)\tilde{p}+n+\alpha-1} d\rho\Bigg)^{\frac{q}{\tilde{p}}} \frac{dr}{r}  \\
&=\left(\int_{|u_2|<\rho} \int_{|u_1|<\rho} \frac{\Phi(u_1, u_2)}{|u_1|^{n_1+\beta_1} |u_2|^{n_2+\beta_2}}  du_1 du_2 \right)^q \\
&\quad\times\omega_n^{\frac{q}{p}}\int_0^\infty r^{-q\lambda-1}\Bigg(\int_0^r \rho^{(\beta_1+\beta_2)\tilde{p}+n+\alpha-1} d\rho\Bigg)^{\frac{q}{\tilde{p}}}dr\\
&\geq\left(\int_{|u_2|<{\epsilon^{-1}}} \int_{|u_1|<{\epsilon^{-1}}} \frac{\Phi(u_1, u_2)}{|u_1|^{n_1+\beta_1} |u_2|^{n_2+\beta_2}}  du_1 du_2 \right)^q \\
&\quad\times\omega_n^{\frac{q}{p}}\int_{\epsilon^{-1}}^\infty r^{-q\lambda-1}\Bigg(\int_{\epsilon^{-1}}^r \rho^{(\beta_1+\beta_2)\tilde{p}+n+\alpha-1} d\rho\Bigg)^{\frac{q}{\tilde{p}}}dr.
\end{align*}

We now consider the case~$q = \infty$. If~$\lambda_i > 0$~for~$i = 1, 2$, then we get that
\begin{align*}
&\|f_{i}\|_{LML_{rad}^{\tilde{p_i},\lambda_i,\infty}L_{ang}^{p_i}(\mathbb{R}^n, |x|^{\alpha})} \\
& = \sup_{r>0}\frac{1}{r^{\lambda_i}}\left( \int_0^r\left(\int_{\mathbb{S}^{n-1}} |f_{i}(\rho\theta)|^{p_i} d\sigma(\theta)\right)^{\frac{\tilde{p_i}}{p_i}} \rho^{n-1}\rho^\alpha d\rho \right)^{\frac{1}{\tilde{p_i}}} \\
& = \sup_{r>1}\frac{1}{r^{\lambda_i}}\left( \int_1^r\left(\int_{\mathbb{S}^{n-1}} \rho^{\beta_ip_i} d\sigma(\theta)\right)^{\frac{\tilde{p_i}}{p_i}} \rho^{n-1}\rho^\alpha d\rho \right)^{\frac{1}{\tilde{p_i}}} \\
& = \omega_n^{\frac{1}{p_i}}\sup_{r>1}\frac{1}{r^{\lambda_i}}\left( \int_1^r \rho^{\beta_i\tilde{p_i}+n+\alpha-1} d\rho\right)^{\frac{1}{\tilde{p_i}}}\\
& = \omega_n^{\frac{1}{p_i}}\left(\frac{1}{\beta_i\tilde{p_i}+n+\alpha}\right)^{\frac{1}{\tilde{p_i}}} \sup_{r>1}\frac{1}{r^{\lambda_i}}\left( r^{\beta_i\tilde{p_i}+n+\alpha} -1 \right)^{\frac{1}{\tilde{p_i}}},
\end{align*}
and
\begin{align*}
&\|R_{\Phi}(f_{1}, f_{2})\|_{LML_{rad}^{\tilde{p},\lambda,\infty}L_{ang}^{p}(\mathbb{R}^n, |x|^{\alpha})} \\
& = \sup_{r>0} \frac{1}{r^{\lambda}} \Bigg( \int_0^r \Bigg( \int_{\mathbb{S}^{n-1}} \Bigg| \int_{|u_2| < \rho} \int_{|u_1| < \rho} \frac{\Phi(u_1, u_2)}{|u_1|^{n_1 + \beta_1} |u_2|^{n_2 + \beta_2}} du_1 du_2 \Bigg|^p  d\sigma(\theta)\Bigg)^{\frac{\tilde{p}}{p}}\\
&\quad\times\rho^{(\beta_1 + \beta_2)\tilde{p}+n+\alpha-1} d\rho \Bigg)^{\frac{1}{\tilde{p}}} \\
& \geq \omega_n^{\frac{1}{p}}\sup_{r>\epsilon^{-1}} \frac{1}{r^{\lambda}} \Bigg( \int_{\epsilon^{-1}}^r  \Bigg| \int_{|u_2| < {\epsilon^{-1}}} \int_{|u_1| < {\epsilon^{-1}}} \frac{\Phi(u_1, u_2)}{|u_1|^{n_1 + \beta_1} |u_2|^{n_2 + \beta_2}} du_1 du_2 \Bigg|^{\tilde{p}}\\
&\quad\times\rho^{(\beta_1 + \beta_2)\tilde{p}+n+\alpha-1} d\rho \Bigg)^{\frac{1}{\tilde{p}}} \\
& =\omega_n^{\frac{1}{p}}\int_{|u_2| < {\epsilon^{-1}}} \int_{|u_1| < {\epsilon^{-1}}} \frac{\Phi(u_1, u_2)}{|u_1|^{n_1 + \beta_1} |u_2|^{n_2 + \beta_2}} du_1 du_2 \\ &\quad\times\sup_{r>\epsilon^{-1}}\frac{1}{r^{\lambda}}\Bigg( \int_{\epsilon^{-1}}^r \rho^{(\beta_1 + \beta_2)\tilde{p}+n+\alpha-1} d\rho  \Bigg)^{\frac{1}{\tilde{p}}}.
\end{align*}

Letting~$\beta_i = \lambda_i - \frac{n+\alpha}{\tilde{p_i}} - \frac{\epsilon}{\tilde{p_i}}$~for~$i = 1, 2$, we arrive at
\begin{align*}
&\|f_{i}\|_{LML_{rad}^{\tilde{p_i},\lambda_i,(q\tilde{p_i})/{\tilde{p}}}L_{ang}^{p_i}(\mathbb{R}^n, |x|^{\alpha})} \\
&= \omega_n^{\frac{1}{p_i}}\left(\frac{1}{\beta_i\tilde{p}+n+\alpha} \right)^{\frac{1}{\tilde{p_i}}+\frac{\tilde{p}}{q\tilde{p_i}}} \left( B \left( \frac{q}{\tilde{p}} + 1, \frac{q}{\tilde{p}} \left( \frac{\tilde{p_i}\lambda_i}{\beta_i\tilde{p_i}+n+\alpha} -1\right) \right) \right)^{\frac{\tilde{p}}{q\tilde{p_i}}}\\
&= \omega_n^{\frac{1}{p_i}}\left(\frac{1}{\tilde{p_i}\lambda_i-\epsilon} \right)^{\frac{1}{\tilde{p_i}}+\frac{\tilde{p}}{q\tilde{p_i}}} \left( B \left( \frac{q}{\tilde{p}} + 1, \frac{q}{\tilde{p}} \left( \frac{\epsilon}{\tilde{p_i} \lambda_i - \epsilon} \right) \right) \right)^{\frac{\tilde{p}}{q\tilde{p_i}}},
\end{align*}
when~$1 \leq q < \infty$, and
\begin{align*}
&\|f_{i}\|_{LML_{rad}^{\tilde{p_i},\lambda_i,\infty}L_{ang}^{p_i}(\mathbb{R}^n, |x|^{\alpha})} \\
& = \omega_n^{\frac{1}{p_i}}\left(\frac{1}{\beta_i\tilde{p_i}+n+\alpha}\right)^{\frac{1}{\tilde{p_i}}} \sup_{r>1}\frac{1}{r^{\lambda_i}}\left( r^{\beta_i\tilde{p_i}+n+\alpha} -1 \right)^{\frac{1}{\tilde{p_i}}}\\
&= \omega_n^{\frac{1}{p_i}} \left( \frac{1}{\tilde{p_i} \lambda_i - \epsilon} \right)^{\frac{1}{\tilde{p_i}}} \sup_{r>1} \frac{1}{r^{\lambda_i}} (r^{\tilde{p_i} \lambda_i - \epsilon} - 1)^{\frac{1}{\tilde{p_i}}}.
\end{align*}

Observing that $\lambda = \lambda_1 + \lambda_2$, it follows that~$\beta_1 + \beta_2 = \lambda - \frac{n+\alpha}{\tilde{p}} - \frac{\epsilon}{p}$. By means of straightforward algebraic manipulation under the conditions~$p \lambda = p_1 \lambda_1 = p_2 \lambda_2, \quad \frac{1}{\tilde{p}} = \frac{1}{\tilde{p_1}} + \frac{1}{\tilde{p_2}}$ and $\frac{1}{p} = \frac{1}{p_1} + \frac{1}{p_2}$, we arrive at the estimates below.

For~$1 \leq q < \infty$, we obtain
\begin{align*}
&\omega_n^{\frac{1}{p}} \left(\int_{\epsilon^{-1}}^\infty r^{-q\lambda-1}\Bigg(\int_{\epsilon^{-1}}^r \rho^{(\beta_1+\beta_2)\tilde{p}+n+\alpha-1} d\rho\Bigg)^{\frac{q}{\tilde{p}}}dr \right)^{\frac{1}{q}}\\
&= \omega_n^{\frac{1}{p}} \Bigg(\frac{1}{(\beta_1+\beta_2)\tilde{p}+n+\alpha}\Bigg)^{\frac{1}{\tilde{p}}}\Bigg(\int_{\epsilon^{-1}}^\infty r^{-q\lambda-1}\Bigg(r^{(\beta_1+\beta_2)\tilde{p}+n+\alpha}-\epsilon^{-(\beta_1+\beta_2)\tilde{p}-n-\alpha}\Bigg)^{\frac{q}{\tilde{p}}}dr\Bigg)^{\frac{1}{q}}\\
&= \omega_n^{\frac{1}{p}} \Bigg(\frac{1}{(\beta_1+\beta_2)\tilde{p}+n+\alpha}\Bigg)^{\frac{1}{\tilde{p}}} \epsilon^{\lambda-(\beta_1+\beta_2)-\frac{n+\alpha}{\tilde{p}}}\Bigg(\int_1^\infty t^{-q\lambda-1}\Bigg(t^{(\beta_1+\beta_2)\tilde{p}+n+\alpha}-1\Bigg)^{\frac{q}{\tilde{p}}}dt\Bigg)^{\frac{1}{q}}\\
&= \omega_n^{\frac{1}{p}} \epsilon^{\lambda-(\beta_1+\beta_2)-\frac{n+\alpha}{\tilde{p}}}\Bigg(\frac{1}{(\beta_1+\beta_2)\tilde{p}+n+\alpha}\Bigg)^{\frac{1}{\tilde{p}}+\frac{1}{q}}\Bigg( B \Bigg( \frac{q}{\tilde{p}} + 1,\frac{q\lambda}{(\beta_1+\beta_2)\tilde{p}+n+\alpha}-\frac{q}{\tilde{p}}\Bigg)\Bigg)^{\frac{1}{q}} \\
& = \omega_n^{\frac{1}{p}} \epsilon^{\frac{\epsilon}{\tilde{p}}}  \left( \frac{1}{\tilde{p} \lambda - \epsilon} \right)^{\frac{1}{\tilde{p}}+\frac{1}{q}} \left( B \left( \frac{q}{\tilde{p}} + 1, \frac{q}{\tilde{p}} \left( \frac{\epsilon}{\tilde{p} \lambda - \epsilon} \right) \right) \right)^{\frac{1}{q}} \\
&= \epsilon^{\frac{\epsilon}{\tilde{p}}} \prod_{i=1}^2 \|f_{i}\|_{LML_{rad}^{\tilde{p_i},\lambda_i,\infty}L_{ang}^{p_i}(\mathbb{R}^n, |x|^{\alpha})}.
\end{align*}

If~$q = \infty$~and~$\lambda_i > 0$~for~$i = 1, 2$, then we get that
\begin{align*}
&\omega_n^{\frac{1}{p}} \sup_{r>\epsilon^{-1}} \frac{1}{r^{\lambda}} \Bigg(\int_{\epsilon^{-1}}^r \rho^{(\beta_1+\beta_2)\tilde{p}+n+\alpha-1} d\rho\Bigg)^{\frac{1}{\tilde{p}}}\\
&=\omega_n^{\frac{1}{p}} \left( \frac{1}{(\beta_1+\beta_2)\tilde{p}+n+\alpha}\right)^{\frac{1}{\tilde{p}}}\sup_{r>\epsilon^{-1}}\frac{1}{r^{\lambda}} \Bigg(r^{(\beta_1+\beta_2)\tilde{p}+n+\alpha}-\epsilon^{-(\beta_1+\beta_2)\tilde{p}-n-\alpha}  \Bigg)^{\frac{1}{\tilde{p}}} \\
&=\omega_n^{\frac{1}{p}} \left( \frac{1}{(\beta_1+\beta_2)\tilde{p}+n+\alpha}\right)^{\frac{1}{\tilde{p}}}\epsilon^{\lambda-(\beta_1+\beta_2)-\frac{n+\alpha}{\tilde{p}}}\sup_{t>1}\frac{1}{t^{\lambda}}\Bigg(t^{(\beta_1+\beta_2)\tilde{p}+n+\alpha}-1\Bigg)^{\frac{1}{\tilde{p}}}  \\
&= \epsilon^{\frac{\epsilon}{p}} \omega_n^{\frac{1}{p}} \left( \frac{1}{\tilde{p} \lambda - \epsilon} \right)^{\frac{1}{\tilde{p}}} \sup_{t>1} \frac{1}{t^{\lambda}} (t^{\tilde{p} \lambda - \epsilon} - 1)^{\frac{1}{\tilde{p}}} \\
&= \epsilon^{\frac{\epsilon}{p}} \prod_{i=1}^2 \|f_{i}\|_{LML_{rad}^{\tilde{p_i},\lambda_i,\infty}L_{ang}^{p_i}(\mathbb{R}^n, |x|^{\alpha})}.
\end{align*}

If~$q = \infty$~and~$\lambda_i = 0$~for~$i = 1, 2$, the sharp bound is then established by taking
$$
f_{i}^\epsilon(x) = |x|^{-\frac{n}{\tilde{p_i}} - \frac{\epsilon}{\tilde{p_i}}} \chi_{\{|x|>1\}}(x)
$$
for sufficiently small~$\epsilon > 0$~and~$1 \leq p_i < \infty$~with~$i = 1, 2$.

Thus, for~$1 \leq q \leq \infty$, we deduce that
\begin{align*}
&\|R_{\Phi}\|_{\prod\limits_{i=1}^2 LML_{rad}^{\tilde{p_i},\lambda_i,q_i}L_{ang}^{p_i}(\mathbb{R}^n, |x|^{\alpha}) \to LML_{rad}^{\tilde{p},\lambda,q}L_{ang}^{p}(\mathbb{R}^n, |x|^{\alpha})} \\
&\geq \frac{\|R_{\Phi}(f_{1}, f_{2})\|_{ LML_{rad}^{\tilde{p},\lambda,q}L_{ang}^{p}(\mathbb{R}^n, |x|^{\alpha})}}{\prod\limits_{i=1}^2 \|f_{i}\|_{LML_{rad}^{\tilde{p_i},\lambda_i,(q \tilde{p_i})/\tilde{p}}L_{ang}^{p_i}(\mathbb{R}^n, |x|^{\alpha})}} \\
&\geq \epsilon^{\frac{\epsilon}{p}} \int_{|u_2| < \epsilon^{-1}} \int_{|u_1| < \epsilon^{-1}} \frac{\Phi(u_1, u_2)}{|u_1|^{n_1} |u_2|^{n_2}} \prod_{i=1}^2 |u_i|^{\frac{n+\alpha + \epsilon}{\tilde{p_i}} - \lambda_i} du_1 du_2,
\end{align*}
where we need to replace~$f_{i}$~with~$f_{i}^\epsilon$~when~$q = \infty$, and~$\lambda_i = 0$~for~$i = 1, 2$. Letting~$\epsilon \to 0^+$, we get the desired result.

(ii) By Minkowski's integral inequality and H\"{o}lder's inequality with~$\frac{1}{q} = \frac{1}{q_1} + \frac{1}{q_2}$, we have
\begin{align*}
&\|R_{\Phi}(f_1, f_2)\|_{LML_{rad}^{\infty,\lambda,q}L_{ang}^{p}(\mathbb{R}^n, |x|^{\alpha})} \\
& = \left( \int_0^\infty r^{-q\lambda} \left(\operatorname*{ess\,sup}_{0<\rho<r} \left( \int_{\mathbb{S}^{n-1}}\left|R_{\Phi}(f_1, f_2)(\rho\theta) \right|^p \rho^{\alpha} d\sigma(\theta)\right)^{\frac{1}{p}}\right)^q \frac{dr}{r} \right)^{\frac{1}{q}} \\
& = \Bigg( \int_0^\infty r^{-q\lambda} \Bigg(\operatorname*{ess\,sup}_{0<\rho<r} \Bigg( \int_{\mathbb{S}^{n-1}}\Bigg|\int_{\mathbb{R}^{n_2}} \int_{\mathbb{R}^{n_1}} \frac{\Phi(u_1, u_2)}{|u_1|^{n_1} |u_2|^{n_2}} \prod_{i=1}^2 f_i \Bigg( \frac{\rho\theta}{|u_i|} \Bigg) du_1 du_2 \Bigg|^p \\
& \quad\times\rho^{\alpha} d\sigma(\theta)\Bigg)^{\frac{1}{p}}\Bigg)^q \frac{dr}{r} \Bigg)^{\frac{1}{q}} \\
& \leq \int_{\mathbb{R}^{n_2}} \int_{\mathbb{R}^{n_1}} \frac{\Phi(u_1, u_2)}{|u_1|^{n_1} |u_2|^{n_2}} \Bigg( \int_0^\infty r^{-q\lambda}\Bigg( \int_{\mathbb{S}^{n-1}} \prod_{i=1}^2 \operatorname*{ess\,sup}_{0<\rho<r} \Bigg| f_i \Bigg( \frac{\rho\theta}{|u_i|} \Bigg) \Bigg|^p \rho^\alpha d\sigma(\theta)\Bigg)^{\frac{q}{p}}\\
&\quad\times\frac{dr}{r} \Bigg)^{\frac{1}{q}} du_1 du_2 \\
& \leq \int_{\mathbb{R}^{n_2}} \int_{\mathbb{R}^{n_1}} \frac{\Phi(u_1, u_2)}{|u_1|^{n_1} |u_2|^{n_2}} \prod_{i=1}^2 \Bigg( \int_0^\infty r^{-q_i\lambda_i}\Bigg( \int_{\mathbb{S}^{n-1}} \operatorname*{ess\,sup}_{0<\rho<r} \Bigg| f_i \Bigg( \frac{\rho\theta}{|u_i|} \Bigg) \Bigg|^{p_i} \rho^\alpha d\sigma(\theta)\Bigg)^{\frac{q_i}{p_i}}\\
&\quad\times\frac{dr}{r} \Bigg)^{\frac{1}{q_i}} du_1 du_2 \\
& =\int_{\mathbb{R}^{n_2}} \int_{\mathbb{R}^{n_1}} \frac{\Phi(u_1, u_2)}{|u_1|^{n_1} |u_2|^{n_2}} \prod_{i=1}^2 \Bigg( \int_0^\infty r^{-q_i\lambda_i}\Bigg( \int_{\mathbb{S}^{n-1}} \operatorname*{ess\,sup}_{0<\rho<\frac{r}{|u_i|}} \Bigg| f_i \Bigg(y\theta \Bigg) \Bigg|^{p_i} y^\alpha |u_i|^\alpha d\sigma(\theta)\Bigg)^{\frac{q_i}{p_i}}\\
&\quad\times\frac{dr}{r} \Bigg)^{\frac{1}{q_i}} du_1 du_2 \\
& =\int_{\mathbb{R}^{n_2}} \int_{\mathbb{R}^{n_1}} \frac{\Phi(u_1, u_2)}{|u_1|^{n_1+\lambda_1-\frac{\alpha}{p_1}} |u_2|^{n_2+\lambda_2-\frac{\alpha}{p_2}}} \prod_{i=1}^2 \Bigg( \int_0^\infty \tilde{\rho}^{-q_i\lambda_i}\Bigg( \int_{\mathbb{S}^{n-1}} \operatorname*{ess\,sup}_{0<\rho<\tilde{\rho}} \Bigg| f_i \Bigg(y\theta \Bigg) \Bigg|^{p_i}\\
&\quad\times y^\alpha d\sigma(\theta)\Bigg)^{\frac{q_i}{p_i}}\frac{dr}{r} \Bigg)^{\frac{1}{q_i}} du_1 du_2 \\
& = \int_{\mathbb{R}^{n_2}} \int_{\mathbb{R}^{n_1}} \frac{\Phi(u_1, u_2)}{|u_1|^{n_1+\lambda_1-\frac{\alpha}{p_1}} |u_2|^{n_2+\lambda_2-\frac{\alpha}{p_2}}}du_1 du_2 \prod_{i=1}^2 \|f_i\|_{LML_{rad}^{\infty,\lambda_i,q_i}L_{ang}^{p_i}(\mathbb{R}^n, |x|^{\alpha})}.
\end{align*}

To establish the sharp bound, we define the following functions. For an arbitrarily small $\epsilon > 0$, we select two real numbers $\beta_1$ and $\beta_2$, dependent on $\epsilon$ (to be specified later), such that~$0 < \beta_i < \lambda_i$~for~$i = 1, 2$. For~$x \in \mathbb{R}^n$, taking
$$
f_{i}(x) = |x|^{\beta_i} \chi_{\{|x|>1\}}(x), \quad i = 1, 2,
$$
we deduce that
\begin{align*}
&\|f_{i}\|_{LML_{rad}^{\infty,\lambda_i,q_i}L_{ang}^{p_i}(\mathbb{R}^n, |x|^{\alpha})}^{q_i} \\
&= \int_0^\infty r^{-q_i\lambda_i} \left(\operatorname*{ess\,sup}_{0<\rho<r} \left( \int_{\mathbb{S}^{n-1}}\left|f_{\beta_i}(\rho\theta) \right|^{p_i} \rho^{\alpha} d\sigma(\theta)\right)^{\frac{1}{p_i}}\right)^{q_i} \frac{dr}{r}  \\
&=  \int_1^\infty r^{-q_i\lambda_i} \left(\operatorname*{ess\,sup}_{0<\rho<r} \left( \int_{\mathbb{S}^{n-1}}\left|\rho\theta\right|^{\beta_ip_i} \rho^{\alpha} d\sigma(\theta)\right)^{\frac{1}{p_i}}\right)^{q_i} \frac{dr}{r} \\
&=\omega_n^{\frac{q_i}{p_i}}\left( \int_1^\infty r^{-q_i\lambda_i} r^{(\beta_i+\frac{\alpha}{p_i})q_i} \frac{dr}{r} \right)^{\frac{1}{q_i}} \\
&=\omega_n^{\frac{q_i}{p_i}}\frac{1}{(\lambda_i-\beta_i-\frac{\alpha}{p_i})q_i} .
\end{align*}
Then we get
\begin{align*}
&\|R_{\Phi}(f_{1}, f_{2})\|_{LML_{rad}^{\infty,\lambda,q}L_{ang}^{p}(\mathbb{R}^n, |x|^{\alpha})}^q \\
& = \int_0^\infty r^{-q\lambda} \Bigg(\operatorname*{ess\,sup}_{0<\rho<r} \Bigg( \int_{\mathbb{S}^{n-1}}\Bigg|\int_{|u_2|<\rho} \int_{|u_1|<\rho} \frac{\Phi(u_1, u_2)}{|u_1|^{n_1} |u_2|^{n_2}} \prod_{i=1}^2 f_{i} \Bigg( \frac{\rho\theta}{|u_i|} \Bigg) du_1 du_2 \Bigg|^p \\
& \quad\times\rho^{\alpha} d\sigma(\theta)\Bigg)^{\frac{1}{p}}\Bigg)^q \frac{dr}{r} \\
& =  \int_0^\infty r^{-q\lambda} \Bigg(\operatorname*{ess\,sup}_{0<\rho<r} \Bigg( \int_{\mathbb{S}^{n-1}}\Bigg|\int_{|u_2|<\rho} \int_{|u_1|<\rho} \frac{\Phi(u_1, u_2)}{|u_1|^{n_1} |u_2|^{n_2}} \prod_{i=1}^2 \Bigg|\frac{\rho\theta}{|u_i|}\Bigg|^{\beta_i} du_1 du_2 \Bigg|^p \\
& \quad\times\rho^{\alpha} d\sigma(\theta)\Bigg)^{\frac{1}{p}}\Bigg)^q \frac{dr}{r}  \\
&\geq\omega_n^{\frac{q}{p}} \int_{\epsilon^{-1}}^\infty r^{-q\lambda} \operatorname*{ess\,sup}_{\epsilon^{-1}<\rho<r} \rho^{(\beta_1+\beta_2+\frac{\alpha}{p})q}\Bigg(\int_{|u_2|<\epsilon^{-1}} \int_{|u_1|<\epsilon^{-1}} \frac{\Phi(u_1, u_2)}{|u_1|^{n_1+\beta_1} |u_2|^{n_2+\beta_2}}du_1 du_2\Bigg)^q \frac{dr}{r}\\
&=\omega_n^{\frac{q}{p}} \Bigg(\int_{|u_2|<\epsilon^{-1}} \int_{|u_1|<\epsilon^{-1}} \frac{\Phi(u_1, u_2)}{|u_1|^{n_1+\beta_1} |u_2|^{n_2+\beta_2}}du_1 du_2\Bigg)^q \int_{\epsilon^{-1}}^\infty r^{-q\lambda+(\beta_1+\beta_2+\frac{\alpha}{p})q}\frac{dr}{r}.
\end{align*}

We now take~$\beta_i = \lambda_i -\frac{\alpha}{p_i}- \frac{\epsilon}{q_i}$~for~$i = 1, 2$. Then
$$
\|f_{i}\|_{LML_{rad}^{\infty,\lambda_i,q_i}L_{ang}^{p_i}(\mathbb{R}^n, |x|^{\alpha})}^{q_i} = \omega_n^{\frac{q_i}{p_i}}\epsilon^{-1}
$$
for~$i = 1, 2$.

Observing that~$\lambda = \lambda_1 + \lambda_2$, we have~$\beta_1 + \beta_2 = \lambda - \frac{\alpha}{p}-\frac{\epsilon}{q}$. Using elementary computations and the conditions~$q \lambda = q_1 \lambda_1 = q_2 \lambda_2$~and~$\frac{1}{q} =\frac{1}{q_1}+ \frac{1}{q_2}$, we have
$$
\left( \int_{\epsilon^{-1}}^\infty r^{(\beta_1 + \beta_2 +\frac{\alpha}{p} - \lambda)q - 1} dr \right)^{\frac{1}{q}} = \epsilon^{\frac{\epsilon-1}{q}},
$$
and
$$
\prod_{i=1}^2 \|f_{i}\|_{LML_{rad}^{\infty,\lambda_i,q_i}L_{ang}^{p_i}(\mathbb{R}^n, |x|^{\alpha})}= \omega_n^{\frac{1}{p_1}}\epsilon^{-\frac{1}{q_1}} \omega_n^{\frac{1}{p_2}}\epsilon^{-\frac{1}{q_2}}=\omega_n^{\frac{1}{p}}\epsilon^{-\frac{1}{q}} .
$$

Thus, for~$1 \leq q, q_i < \infty$, we obtain
\begin{align*}
&\|R_{\Phi}\|_{\prod\limits_{i=1}^2 LML_{rad}^{\infty,\lambda_i,q_i}L_{ang}^{p_i}(\mathbb{R}^n, |x|^{\alpha}) \to LML_{rad}^{\infty,\lambda,q}L_{ang}^{p}(\mathbb{R}^n, |x|^{\alpha})} \\
&\geq \frac{\|R_{\Phi}(f_{1}, f_{2})\|_{LML_{rad}^{\infty,\lambda,q}L_{ang}^{p}(\mathbb{R}^n, |x|^{\alpha})}}{\prod\limits_{i=1}^2 \|f_{i}\|_{LML_{rad}^{\infty,\lambda_i,q_i}L_{ang}^{p_i}(\mathbb{R}^n, |x|^{\alpha})}} \\
&\geq \epsilon^{\frac{\epsilon}{q}} \int_{|u_1| < \epsilon^{-1}} \int_{|u_2| < \epsilon^{-1}} \frac{\Phi(u_1, u_2)}{|u_1|^{n_1} |u_2|^{n_2}} \prod_{i=1}^2 |u_i|^{\frac{\epsilon}{q_i}+\frac{\alpha}{p_i} - \lambda_i} du_1 du_2.
\end{align*}

Letting~$\epsilon \to 0^+$, we get the desired result.
\end{proof}

We now establish a technical lemma, which plays a key role in the proof of Theorem \ref{th4}.

\begin{lemma}\label{th7}
Let ~$1 \leq p, p_i, \tilde{p}, \tilde{p_i} \leq \infty$, $0 < q, q_i \leq \infty$ and $\alpha\in\mathbb{R}$, let~$\lambda$~ satisfy (1.3) and $\lambda_i$~ satisfy
\begin{align*}
\lambda_i > 0 \text{ if } q_i < \infty \quad \text{and} \quad \lambda_i \geq 0 \text{ if } q_i = \infty
\tag{2.3}
\end{align*}
for ~$i = 1, \ldots, m$. Assume that ~$\Phi$~ is a nonnegative, locally integrable, radial function and
$$
g_{f_i}(y_i) = \omega_{n_i}^{-1} \int_{\mathbb{S}^{n_i-1}} f_i(|y_i| \xi_i) d\sigma(\xi_i), \quad y_i \in \mathbb{R}^{n_i},
$$
then all~$g_{f_i}$~are radial functions and
\begin{align*}
\frac{\|R_{\Phi}(\vec{f})\|_{LML_{rad}^{\tilde{p},\lambda,q}L_{ang}^{p}(\mathbb{R}^n, |x|^{\alpha})}}{\prod\limits_{i=1}^{m} \|f_i\|_{LML_{rad}^{\tilde{p_i},\lambda_i,q_i}L_{ang}^{p_i}(\mathbb{R}^n, |x|^{\alpha})}}
\leq \frac{\|R_{\Phi}(\vec{g_f})\|_{LML_{rad}^{\tilde{p},\lambda,q}L_{ang}^{p}(\mathbb{R}^n, |x|^{\alpha})}}{\prod\limits_{i=1}^{m} \|g_{f_i}\|_{LML_{rad}^{\tilde{p_i},\lambda_i,q_i}L_{ang}^{p_i}(\mathbb{R}^n, |x|^{\alpha})}},
\end{align*}
where and in what follows, $\omega_{n_i}$~is the surface area of the unit ball in~$\mathbb{R}^{n_i}$~for~$1 \leq i \leq m$,
$$
\vec{g_f} = (g_{f_1}, g_{f_2}, \ldots, g_{f_m}).
$$
\end{lemma}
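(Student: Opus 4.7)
My plan is to reduce the claimed ratio inequality to two ingredients: a pointwise sphere-norm bound coming from Jensen's inequality, and a Minkowski--H\"older upper bound for $R_\Phi$ that becomes an equality on radial inputs. Throughout I assume without loss of generality that each $f_i \geq 0$.

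For the denominator step, Jensen's inequality on $S^{n_i-1}$ with the convex function $t \mapsto t^{p_i}$ (valid since $p_i \geq 1$) gives, pointwise in $s > 0$, $g_{f_i}(s)^{p_i} \leq \omega_{n_i}^{-1} \int_{S^{n_i-1}} f_i(s\xi)^{p_i}\, d\sigma(\xi)$. Because $g_{f_i}$ is radial, this rewrites as $\|g_{f_i}(s\cdot)\|_{L^{p_i}(S^{n_i-1})} \leq \|f_i(s\cdot)\|_{L^{p_i}(S^{n_i-1})}$. Substituting into the definition of the outer mixed radial-angular norm and using its monotonicity in the inner integrand immediately yields $\|g_{f_i}\|_{LML_{rad}^{\tilde p_i,\lambda_i,q_i}L_{ang}^{p_i}(\mathbb{R}^n,|x|^\alpha)} \leq \|f_i\|_{LML_{rad}^{\tilde p_i,\lambda_i,q_i}L_{ang}^{p_i}(\mathbb{R}^n,|x|^\alpha)}$ for each $i$.

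For the numerator, I would apply Minkowski's integral inequality in the angular variable to move the $L^p(S^{n-1})$-norm inside the $d\vec u$ integral, then H\"older on the sphere with exponents $p_i$ satisfying $\sum_i 1/p_i = 1/p$ (the natural setting of the theorems that invoke this lemma). This bounds $\|R_\Phi(\vec f)(\rho\cdot)\|_{L^p(S^{n-1})}$ by an integral expression depending on $f_i$ only through the profile $s \mapsto \|f_i(s\cdot)\|_{L^{p_i}(S^{n-1})}$, mirroring the computation already done in the proof of Theorem \ref{th3}(i). Since each $g_{f_i}$ is radial and $\Phi$ is radial, $R_\Phi(\vec g_f)$ is itself radial, so $\|R_\Phi(\vec g_f)(\rho\cdot)\|_{L^p(S^{n-1})} = \omega_n^{1/p}\, R_\Phi(\vec g_f)(\rho)$; a polar-coordinate computation combined with the identity $\|g_{f_i}(s\cdot)\|_{L^{p_i}(S^{n-1})} = \omega_n^{1/p_i} g_{f_i}(s)$ recovers exactly the same integral expression, now with equality and with $g_{f_i}$ in place of $f_i$.

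To conclude, I would take the outer $LML_{rad}^{\tilde p,\lambda,q}$-norm on both sides, insert the pointwise sphere-norm bound from the first step into the integral, and then exploit the multilinear homogeneity of $R_\Phi$ and of both norms to normalize $\|g_{f_i}\|_{LML}=1$ and compare the resulting scalar quantities. I expect the main obstacle to be precisely this final rearrangement: one must verify that the Jensen loss in passing from $\|f_i(s\cdot)\|_{L^{p_i}}$ to $\omega_n^{1/p_i} g_{f_i}(s)$ inside the integral is at least compensated by the denominator loss $\|g_{f_i}\|_{LML}\leq \|f_i\|_{LML}$; the tensor-product structure of the multilinear operator and the homogeneity of the $LML$-norm in each slot are the key tools for the bookkeeping.
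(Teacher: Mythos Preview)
Your denominator step (Jensen on the sphere, giving $\|g_{f_i}\|\le\|f_i\|$) is correct and is essentially what the paper does via H\"older. The gap is in the numerator, and the obstacle you flag at the end is fatal rather than a bookkeeping issue. Your Minkowski--H\"older chain yields an upper bound $\|R_\Phi(\vec f)(\rho\cdot)\|_{L^p}\le A(\vec f;\rho)$, where $A$ is an integral in the profiles $s\mapsto\|f_i(s\cdot)\|_{L^{p_i}}$, and you observe that on radial inputs this becomes an equality: $\|R_\Phi(\vec g_f)(\rho\cdot)\|_{L^p}=A(\vec g_f;\rho)$. But these are two \emph{different} expressions, and Jensen only gives $A(\vec g_f;\rho)\le A(\vec f;\rho)$. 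You are left with three inequalities all pointing the same direction, which cannot be rearranged into the claimed ratio bound. Homogeneity and normalization do not help: the numerator loss (passing from the $f_i$-profiles to the $g_{f_i}$-profiles inside a nonlinear integral operator) and the denominator loss (passing from $\|f_i\|$ to $\|g_{f_i}\|$) are governed by genuinely different functionals of the same profiles, and one can construct radial profiles $G\le F$ for which $\|T(F)\|/N(F)>\|T(G)\|/N(G)$ for reasonable choices of $T$ and $N$. The tensor structure of the multilinear operator does nothing to prevent this.

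The paper never faces this problem, because it shows the numerators are \emph{equal}. Passing to polar coordinates $u_i=s_i\xi_i$ in each factor, the angular integration $\int_{S^{n_i-1}}f_i(s_i\xi_i)\,d\sigma(\xi_i)=\omega_{n_i}\,g_{f_i}(s_i)$ is precisely what produces the spherical average, so $\widetilde{R_\Phi}(\vec f)(x)=\widetilde{R_\Phi}(\vec g_f)(x)$ pointwise (the paper cites \cite{An2023} for this identity; note the lemma is actually invoked for $\widetilde{R_\Phi}$ in Theorem~\ref{th4}, and the appearance of $R_\Phi$ in the statement is a typo carried into the last displayed line of the proof). With identical numerators, the ratio inequality follows at once from $\|g_{f_i}\|\le\|f_i\|$. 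Replace your Minkowski--H\"older upper bound with this pointwise identity.
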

\begin{proof}[Proof of Lemma \ref{th7}]
We restrict our attention to the case $m = 2$. It is clear that both $g_{f_1}$ and $g_{f_2}$ are radial functions. Since~$\Phi$~is radial, by using polar coordinates, similarly to \cite{An2023}, we can obtain
\begin{align*}
\widetilde{R_{\Phi}}(g_{f_1}, g_{f_2})(x)=\widetilde{R_{\Phi}}(f_1, f_2)(x).
\end{align*}

If~$0 < q_1 < \infty$, then we apply Minkowski's integral inequality and H\"{o}lder's inequality with~$1 \leq p_1,\tilde{p_1} < \infty$~to obtain
\begin{align*}
&\|g_{f_1}\|_{LML_{rad}^{\tilde{p_1},\lambda_1,q_1}L_{ang}^{p_1}(\mathbb{R}^n, |x|^{\alpha})} \\
&= \left( \int_0^\infty \left(\frac{1}{r^{\lambda_1}} \left( \int_0^r \left( \int_{\mathbb{S}^{n-1}} |g_{f_1}(\rho\theta)|^{p_1} d\sigma(\theta)\right)^{\frac{\tilde{p_1}}{p_1}} \rho^{n-1}\rho^\alpha d\rho \right)^{\frac{1}{\tilde{p_1}}} \right)^{q_1} \frac{dr}{r} \right)^{\frac{1}{q_1}} \\
&= \frac{1}{\omega_{n_1}}\Bigg( \int_0^\infty r^{-q_1 \lambda_1 - 1} \Bigg( \int_0^r \Bigg( \int_{\mathbb{S}^{n-1}} \Bigg| \int_{\mathbb{S}^{n_1-1}} f_1(|\rho\theta| \xi_1) d\sigma(\xi_1) \Bigg|^{p_1} d\sigma(\theta)\Bigg)^{\frac{\tilde{p_1}}{p_1}} \\
&\quad\times \rho^{n+\alpha-1} d\rho \Bigg)^{\frac{q_1}{\tilde{p_1}}} dr \Bigg)^{\frac{1}{q_1}} \\
&= \frac{\omega_{n}^{\frac{1}{p_1}}}{\omega_{n_1}} \left( \int_0^\infty r^{-q_1 \lambda_1 - 1} \left( \int_0^r \left| \int_{\mathbb{S}^{n_1-1}} f_1(\rho \xi_1) d\sigma(\xi_1) \right|^{\tilde{p_1}} \rho^{n+\alpha-1} d\rho \right)^{\frac{q_1}{\tilde{p_1}}} dr \right)^{\frac{1}{q_1}} \\
&\leq \frac{\omega_{n}^{\frac{1}{p_1}}}{\omega_{n_1}} \left( \int_0^\infty r^{-q_1 \lambda_1 - 1} \left( \int_0^r \left( \int_{\mathbb{S}^{n_1-1}} |f_1(\rho \xi_1)| d\sigma(\xi_1) \right)^{\tilde{p_1}} \rho^{n+\alpha-1} d\rho \right)^{\frac{q_1}{\tilde{p_1}}} dr \right)^{\frac{1}{q_1}}\\
&\leq \frac{\omega_{n}^{\frac{1}{p_1}}\omega_{n_1}^{\frac{1}{p_1'}}}{\omega_{n_1}} \left( \int_0^\infty r^{-q_1 \lambda_1 - 1} \left( \int_0^r \left( \int_{\mathbb{S}^{n_1-1}} |f_1(\rho \xi_1)|^{p_1} d\sigma(\xi_1) \right)^{\frac{\tilde{p_1}}{p_1}} \rho^{n+\alpha-1} d\rho \right)^{\frac{q_1}{\tilde{p_1}}} dr \right)^{\frac{1}{q_1}}\\
&\leq \frac{\omega_{n}^{\frac{1}{p_1}}\omega_{n_1}^{\frac{1}{p_1'}}}{\omega_{n_1}} \|f_1\|_{LML_{rad}^{\tilde{p_1},\lambda_1,q_1}L_{ang}^{p_1}(\mathbb{R}^n, |x|^{\alpha})}.
\end{align*}

With this, we have
$$
\|g_{f_1}\|_{LML_{rad}^{\tilde{p_1},\lambda_1,q_1}L_{ang}^{p_1}(\mathbb{R}^n, |x|^{\alpha})} \leq \frac{\omega_{n}^{\frac{1}{p_1}}\omega_{n_1}^{\frac{1}{p_1'}}}{\omega_{n_1}} \|f_1\|_{LML_{rad}^{\tilde{p_1},\lambda_1,q_1}L_{ang}^{p_1}(\mathbb{R}^n, |x|^{\alpha})}.
$$

If~$\tilde{p_1} = \infty$~and~$1 \leq p_1 < \infty$, there holds
\begin{align*}
&\|g_{f_1}\|_{LML_{rad}^{\infty,\lambda_1,q_1}L_{ang}^{p_1}(\mathbb{R}^n, |x|^{\alpha})} \\
&= \left( \int_0^\infty r^{-q_1 \lambda_1 - 1} \left( \operatorname*{ess\,sup}_{0<\rho<r}\Bigg( \int_{\mathbb{S}^{n-1}} |g_{f_1}(\rho\theta)|^{p_1}\rho^\alpha d\sigma(\theta) \right)^{\frac{1}{p_1}}\Bigg)^{q_1} dr \right)^{\frac{1}{q_1}} \\
&= \frac{1}{\omega_{n_1}} \left( \int_0^\infty r^{-q_1 \lambda_1 - 1} \left( \operatorname*{ess\,sup}_{0<\rho<r}\Bigg( \int_{\mathbb{S}^{n-1}} \left| \int_{\mathbb{S}^{n_1-1}} f_1(|\rho\theta| \xi_1) d\sigma(\xi_1) \right|^{p_1} \rho^\alpha d\sigma(\theta) \right)^{\frac{1}{p_1}} \Bigg)^{q_1} dr \right)^{\frac{1}{q_1}} \\
&\leq \frac{\omega_n^{\frac{1}{p_1}}}{\omega_{n_1}} \left( \int_0^\infty r^{-q_1 \lambda_1 - 1} \left( \operatorname*{ess\,sup}_{0<\rho<r}\Bigg( \Bigg(\int_{\mathbb{S}^{n_1-1}} \left| f_1(\rho \xi_1)\right| d\sigma(\xi_1)\Bigg)^{p_1} \rho^\alpha  \right)^{\frac{1}{p_1}} \Bigg)^{q_1} dr \right)^{\frac{1}{q_1}} \\
&\leq \frac{\omega_n^{\frac{1}{p_1}}\omega_{n_1}^{\frac{1}{p_1'}}}{\omega_{n_1}} \left( \int_0^\infty r^{-q_1 \lambda_1 - 1} \left( \operatorname*{ess\,sup}_{0<\rho<r}\Bigg( \int_{\mathbb{S}^{n_1-1}} \left| f_1(\rho \xi_1)\right|^{p_1} d\sigma(\xi_1) \rho^\alpha  \right)^{\frac{1}{p_1}} \Bigg)^{q_1} dr \right)^{\frac{1}{q_1}} \\
&= \frac{\omega_n^{\frac{1}{p_1}}\omega_{n_1}^{\frac{1}{p_1'}}}{\omega_{n_1}}\|f_1\|_{LML_{rad}^{\infty,\lambda_1,q_1}L_{ang}^{p_1}(\mathbb{R}^n, |x|^{\alpha})}.
\end{align*}

Employing a similar approach, we deduce that
$$
\|g_{f_1}\|_{LML_{rad}^{\tilde{p_1},\lambda_1,\infty}L_{ang}^{p_1}(\mathbb{R}^n, |x|^{\alpha})} \leq \|f_1\|_{LML_{rad}^{\tilde{p_1},\lambda_1,\infty}L_{ang}^{p_1}(\mathbb{R}^n, |x|^{\alpha})}
$$
for~$1 \leq \tilde{p_1}\leq \infty$~and~$1 \leq p_1< \infty$, and also
$$
\|g_{f_2}\|_{LML_{rad}^{\tilde{p_2},\lambda_2,q_2}L_{ang}^{p_2}(\mathbb{R}^n, |x|^{\alpha})} \leq \|f_2\|_{LML_{rad}^{\tilde{p_2},\lambda_2,q_2}L_{ang}^{p_2}(\mathbb{R}^n, |x|^{\alpha})}
$$
for~$1 \leq \tilde{p_2}\leq \infty$, $1 \leq p_2 < \infty$~and~$0 < q_2 \leq \infty$. Therefore, one has that
$$
\frac{\|R_{\Phi}(f_1, f_2)\|_{LML_{rad}^{\tilde{p},\lambda,q}L_{ang}^{p}(\mathbb{R}^n, |x|^{\alpha})}}{\prod\limits_{i=1}^2 \|f_i\|_{LML_{rad}^{\tilde{p_i},\lambda_i,q_i}L_{ang}^{p_i}(\mathbb{R}^n, |x|^{\alpha})}} \leq \frac{\|R_{\Phi}(g_{f_1}, g_{f_2})\|_{LML_{rad}^{\tilde{p},\lambda,q}L_{ang}^{p}(\mathbb{R}^n, |x|^{\alpha})}}{\prod\limits_{i=1}^2 \|g_{f_i}\|_{LML_{rad}^{\tilde{p_i},\lambda_i,q_i}L_{ang}^{p_i}(\mathbb{R}^n, |x|^{\alpha})}}.
$$
The proof of Lemma \ref{th7} is completed.
\end{proof}
Lemma \ref{th7} establishes the equality of the operator norms of~$R_{\Phi}$~and its restriction to radial functions on the space~$LML_{rad}^{\tilde{p},\lambda,q}L_{ang}^{p}(\mathbb{R}^n, |x|^{\alpha})$. Consequently, we may assume without loss of generality that the functions~$f_i$~for~$i = 1, 2$~are radial in the subsequent proof.

\begin{proof}[Proof of Theorem \ref{th4}]
By Lemma \ref{th7}, we assume that $f_i$ are radial functions for $i = 1, 2$. By expressing the function $\widetilde{R_{\Phi}}(f_1, f_2)$ in polar coordinates, similar to \cite{An2023}, we can obtain
\begin{align*}
\widetilde{R_{\Phi}}(f_1, f_2)(x) = \omega_{n_1} \omega_{n_2} \int_0^\infty \int_0^\infty \prod_{i=1}^2 f_i\left( \frac{|x|}{t_i} \right) \frac{\Phi(t_1, t_2)}{t_1 t_2} dt_1 dt_2,
\end{align*}
where we have used that \(\Phi\) is a radial function. For \(1 \leq p, p_i < \infty\), by Minkowski's integral inequality and H\"{o}lder's inequality, we derive
\begin{align*}
&\left( \int_0^r \left( \int_{\mathbb{S}^{n-1}} |\widetilde{R_{\Phi}}(f_1, f_2)(\rho\theta)|^p d\sigma(\theta) \right)^{\frac{\tilde{p}}{p}}\rho^{n+\alpha-1}  d\rho \right)^{\frac{1}{\tilde{p}}} \\
&=\omega_{n_1} \omega_{n_2}\left( \int_0^r \left( \int_{\mathbb{S}^{n-1}} \left|\int_0^\infty \int_0^\infty \prod_{i=1}^2 f_i\left( \frac{|\rho\theta|}{t_i} \right) \frac{\Phi(t_1, t_2)}{t_1 t_2} dt_1 dt_2 \right|^p d\sigma(\theta) \right)^{\frac{\tilde{p}}{p}}\rho^{n+\alpha-1}  d\rho \right)^{\frac{1}{\tilde{p}}}\\
& \leq \omega_{n_1} \omega_{n_2} \int_0^\infty \int_0^\infty \prod_{i=1}^2 \Bigg( \int_0^r \Bigg( \int_{\mathbb{S}^{n-1}}\left|f_i\left( \frac{|\rho\theta|}{t_i} \right)\right|^p d\sigma(\theta)\Bigg)^{\frac{\tilde{p}}{p}} \rho^{n+\alpha-1}  d\rho\Bigg)^{\frac{1}{\tilde{p}}} \frac{\Phi(t_1, t_2)}{t_1 t_2} dt_1 dt_2 \\
& \leq \omega_{n_1} \omega_{n_2} \int_0^\infty \int_0^\infty \prod_{i=1}^2 \Bigg( \int_0^{\frac{r}{t_i}} \Bigg( \int_{\mathbb{S}^{n-1}}\left|f_i\left(y_i|\theta|\right)\right|^{p_i} d\sigma(\theta)\Bigg)^{\frac{\tilde{p_i}}{p_i}} {y_i}^{n+\alpha-1}  dy_i\Bigg)^{\frac{1}{\tilde{p_i}}} \frac{\Phi(t_1, t_2)}{t_1 t_2}\\
&\quad\times{t_1}^{\frac{n+\alpha}{\tilde{p_1}}}{t_2}^{\frac{n+\alpha}{\tilde{p_2}}}  dt_1 dt_2.
\end{align*}

The rest of the proof runs as before, so we omit the repetitive details, and the proof of Theorem \ref{th7} is finished.
\end{proof}

\begin{proof}[Proof of Theorem \ref{th5}]
The proof of this theorem follows by similar arguments to the ones in the proof of Theorem \ref{th3}.
\end{proof}

\begin{lemma}\label{th8}
Let~$1 \leq p, p_i, \tilde{p}, \tilde{p_i} \leq \infty$, $0 < q, q_i \leq \infty$ and $\alpha\in\mathbb{R}$, let~$\lambda$~satisfy (1.3) and $\lambda_i$~satisfy (2.3) for~$i = 1, \ldots, m$. Suppose that~$\Phi$~be a nonnegative locally integrable function and
$$
g_{f_i}(y_i) = \omega_{n_i}^{-1} \int_{|\xi_i|=1} f_i(|y_i| \xi_i) d\xi_i, \quad y_i \in \mathbb{R}^{n_i}.
$$

Then all~$g_{f_i}$~are radial functions and
$$
\frac{\|\widetilde{S_{\Phi}}(\vec{f})\|_{LML_{rad}^{\tilde{p},\lambda,q}L_{ang}^{p}(\mathbb{R}^n, |x|^{\alpha})}}{\prod\limits_{i=1}^{m} \|f_i\|_{LML_{rad}^{\tilde{p_i},\lambda_i,q_i}L_{ang}^{p_i}(\mathbb{R}^n, |x|^{\alpha})}} \leq \frac{\|\widetilde{S_{\Phi}}(\vec{g_f})\|_{LML_{rad}^{\tilde{p},\lambda,q}L_{ang}^{p}(\mathbb{R}^n, |x|^{\alpha})}}{\prod\limits_{i=1}^{m} \|g_{f_i}\|_{LML_{rad}^{\tilde{p_i},\lambda_i,q_i}L_{ang}^{p_i}(\mathbb{R}^n, |x|^{\alpha})}}.
$$
\end{lemma}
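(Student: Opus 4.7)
The plan is to adapt the strategy used in Lemma~\ref{th7}, exploiting two observations. First, the kernel $\Phi(x/|\vec u|)/|\vec u|^{\sum_i n_i}$ of $\widetilde{S_\Phi}$ is \emph{separately radial} in each variable $u_i$, because it depends on $\vec u$ only through the magnitudes $|u_1|,\ldots,|u_m|$ via $|\vec u|=\sqrt{\sum|u_i|^2}$. This is the reason no radiality assumption on $\Phi$ is needed here, unlike in Lemma~\ref{th7}. Second, the spherical-averaging map $f_i\mapsto g_{f_i}$ is a contraction on every mixed radial-angular local Morrey-type norm. Radiality of each $g_{f_i}$ is immediate from the definition.

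For the key identity $\widetilde{S_\Phi}(\vec g_f)(x)=\widetilde{S_\Phi}(\vec f)(x)$, I would pass to polar coordinates $u_i=s_i\xi_i$ in each $\mathbb{R}^{n_i}$. Since $|\vec u|=\sqrt{\sum s_i^2}$ does not depend on the angular variables $\xi_1,\ldots,\xi_m$, the kernel factors out of the sphere integrations. Each inner integration collapses using
$$\int_{\mathbb{S}^{n_i-1}} f_i(s_i\xi_i)\,d\sigma(\xi_i)=\omega_{n_i}\,g_{f_i}(s_i),$$
and since $g_{f_i}$ is radial the same expression equals $\int_{\mathbb{S}^{n_i-1}}g_{f_i}(s_i\xi_i)\,d\sigma(\xi_i)$; reassembling the polar decomposition yields $\widetilde{S_\Phi}(\vec f)(x)=\widetilde{S_\Phi}(\vec g_f)(x)$.

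For the contraction step, Jensen's inequality applied to the spherical average gives, for every $\rho>0$ and $1\le p_i\le\infty$,
$$\int_{\mathbb{S}^{n-1}}|g_{f_i}(\rho\theta)|^{p_i}\,d\sigma(\theta)\le\int_{\mathbb{S}^{n-1}}|f_i(\rho\theta)|^{p_i}\,d\sigma(\theta),$$
with the usual modification (pointwise almost-everywhere bound and essential supremum) when $p_i=\infty$. Integrating this pointwise inequality against the weight $\rho^{n+\alpha-1}\,d\rho$ and then against $r^{-q_i\lambda_i-1}\,dr$, or taking essential suprema in the relevant variables when $\tilde p_i=\infty$ or $q_i=\infty$, propagates the inequality through each of the four norm definitions. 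This yields
$$\|g_{f_i}\|_{LML_{rad}^{\tilde p_i,\lambda_i,q_i}L_{ang}^{p_i}(\mathbb{R}^n,|x|^\alpha)}\le\|f_i\|_{LML_{rad}^{\tilde p_i,\lambda_i,q_i}L_{ang}^{p_i}(\mathbb{R}^n,|x|^\alpha)}$$
for every $i=1,\ldots,m$.

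Combining the two estimates,
$$\frac{\|\widetilde{S_\Phi}(\vec f)\|_{LML_{rad}^{\tilde p,\lambda,q}L_{ang}^p}}{\prod_i\|f_i\|_{LML_{rad}^{\tilde p_i,\lambda_i,q_i}L_{ang}^{p_i}}}=\frac{\|\widetilde{S_\Phi}(\vec g_f)\|_{LML_{rad}^{\tilde p,\lambda,q}L_{ang}^p}}{\prod_i\|f_i\|_{LML_{rad}^{\tilde p_i,\lambda_i,q_i}L_{ang}^{p_i}}}\le\frac{\|\widetilde{S_\Phi}(\vec g_f)\|_{LML_{rad}^{\tilde p,\lambda,q}L_{ang}^p}}{\prod_i\|g_{f_i}\|_{LML_{rad}^{\tilde p_i,\lambda_i,q_i}L_{ang}^{p_i}}},$$
which is exactly the claim. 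I expect the main obstacle to be executing Step~2 cleanly: although the kernel is separately radial in each $u_i$, the magnitudes $|u_i|$ are coupled through $|\vec u|$, so one must keep track of all $m$ polar expansions simultaneously. The bookkeeping in Step~3 across the edge cases (some or all of $\tilde p_i,p_i,q_i$ equal to $\infty$) is routine but must be checked case by case, exactly as in the corresponding step of Lemma~\ref{th7}.
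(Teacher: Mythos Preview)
Your proposal is correct and follows essentially the same approach as the paper, which simply states that the proof is similar to that of Lemma~\ref{th7}: establish the identity $\widetilde{S_\Phi}(\vec g_f)=\widetilde{S_\Phi}(\vec f)$ via polar coordinates in each $u_i$, then use the contraction $\|g_{f_i}\|\le\|f_i\|$ in the mixed radial-angular local Morrey-type norm. Your explicit observation that the kernel $\Phi(x/|\vec u|)/|\vec u|^{\sum n_i}$ depends on the $u_i$ only through their moduli---hence no radiality of $\Phi$ is required, in contrast to Lemma~\ref{th7}---is exactly the point that distinguishes this lemma from the previous one, and your use of Jensen's inequality for the contraction is equivalent to the paper's H\"older argument.
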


The proof of this lemma is similar to the proof of Lemma \ref{th7}.

\begin{proof}[Proof of Theorem \ref{th6}]
By Lemma \ref{th8}, we assume that~$f_1$~and~$f_2$~are radial functions. Similar to \cite{An2023}, let us write~$\widetilde{S_{\Phi}}(f_1, f_2)$~in the form
\begin{align*}
&\widetilde{S_{\Phi}}(f_1, f_2)(x)\\
&= \omega_{n_1} \omega_{n_2} \int_0^\infty \int_0^\infty \Phi\left( \frac{t_1 t_2}{\sqrt{t_1^2 + t_2^2}} \right)\frac{{t_1}^{n_1}{t_2}^{n_2}}{({t_1}^2+{t_2}^2)^{\frac{n_1+n_2}{2}}}  f_1\left( \frac{|x|}{t_1} \right) f_2\left( \frac{|x|}{t_2} \right) \frac{dt_1}{t_1} \frac{dt_2}{t_2}.
\end{align*}

For~$1 \leq p, p_i,\tilde{p}, \tilde{p_i} < \infty$, by Minkowski's integral inequality and H\"{o}lder's inequality, we have
\begin{align*}
&\left( \int_0^r \left( \int_{\mathbb{S}^{n-1}} |\widetilde{S_{\Phi}}(f_1, f_2)(x)|^p d\sigma(\theta) \right)^{\frac{\tilde{p}}{p}}\rho^{n+\alpha-1}  d\rho \right)^{\frac{1}{p}} \\
&=\omega_{n_1} \omega_{n_2}\Bigg( \int_0^r \Bigg( \int_{\mathbb{S}^{n-1}} \Bigg|\int_0^\infty \int_0^\infty \Phi\Bigg( \frac{t_1 t_2}{\sqrt{t_1^2 + t_2^2}} \Bigg)\frac{{t_1}^{n_1}{t_2}^{n_2}}{({t_1}^2+{t_2}^2)^{\frac{n_1+n_2}{2}}}  f_1\Bigg( \frac{|\rho\theta|}{t_1} \Bigg)\\
&\quad \times f_2\Bigg( \frac{|\rho\theta|}{t_2} \Bigg) \frac{dt_1}{t_1} \frac{dt_2}{t_2}\Bigg|^p d\sigma(\theta)\Bigg)^{\frac{\tilde{p}}{p}}\rho^{n+\alpha-1}  d\rho \Bigg)^{\frac{1}{\tilde{p}}} \\
&\leq\omega_{n_1} \omega_{n_2}\int_0^\infty \int_0^\infty \Phi\Bigg( \frac{t_1 t_2}{\sqrt{t_1^2 + t_2^2}} \Bigg)\frac{{t_1}^{n_1}{t_2}^{n_2}}{({t_1}^2+{t_2}^2)^{\frac{n_1+n_2}{2}}} \prod_{i=1}^2 \Bigg( \int_0^r\Bigg( \int_{\mathbb{S}^{n-1}} \Bigg|f_i\Bigg( \frac{|\rho\theta|}{t_i} \Bigg)\Bigg|^{p}\\
&\quad \times d\sigma(\theta)\Bigg)^{\frac{\tilde{p}}{p}}\rho^{n+\alpha-1}  d\rho \Bigg)^{\frac{1}{\tilde{p}}} \frac{dt_1}{t_1}\frac{dt_2}{t_2}\\
&=\omega_{n_1} \omega_{n_2}\int_0^\infty \int_0^\infty \Phi\Bigg( \frac{t_1 t_2}{\sqrt{t_1^2 + t_2^2}} \Bigg)\frac{{t_1}^{n_1}{t_2}^{n_2}}{({t_1}^2+{t_2}^2)^{\frac{n_1+n_2}{2}}} \prod_{i=1}^2 \Bigg( \int_0^{\frac{r}{t_i}}\Bigg( \int_{\mathbb{S}^{n-1}} \Bigg|f_i\Bigg( y_i|\theta|\Bigg)\Bigg|^{p_i}\\
&\quad \times d\sigma(\theta) \Bigg)^{\frac{\tilde{p_i}}{p_i}} {y_i}^{n+\alpha-1}  dy_i \Bigg)^{\frac{1}{\tilde{p_i}}} {t_1}^{\frac{n+\alpha}{\tilde{p_1}}-1} {t_2}^{\frac{n+\alpha}{\tilde{p_2}}-1}  dt_1dt_2.
\end{align*}

The remaining proof is just like before, so the specific details can be omitted, and the proof of Theorem \ref{th6} is finished.
\end{proof}

\section{Applications}

As previously noted, a primary reason for investigating Hausdorff operators stems from the fact that numerous classical operators in analysis can be recovered as particular instances of the general Hausdorff operator through appropriate selections of the function~$\Phi$. These classical operators include the \(n\)-dimensional Hardy operator and its dual operator, the multilinear weighted Hardy operator, and the multilinear Ces\`{a}ro operator.

If we take
$$
\Phi(x) = \frac{\chi_{\{|x| > 1\}}(x)}{\nu_{n}|x|^{n}} \quad \text{and} \quad \Phi(x) = \frac{1}{\nu_{n}}\chi_{\{|x| < 1\}}(x)
$$
in the $\widetilde{\mathcal{H}_{\Phi}}$, then we get the $n$-dimensional Hardy operator
$$
\mathcal{H}(f)(x) = \frac{1}{|B(0, |x|)|} \int_{B(0, |x|)} f(y) dy, \quad x \in \mathbb{R}^{n} \setminus \{0\},
$$
and the dual operator
$$
\mathcal{H}^{*}(f)(x) = \int_{|y| > |x|} \frac{f(y)}{|B(0, |y|)|} dy, \quad x \in \mathbb{R}^{n}.
$$
Here and below, $\nu_n$ is the volume of the unit ball in $\mathbb{R}^n$.
\begin{theorem}
Let~$1 \leq p, \tilde{p}, q \leq \infty$, $\alpha\in\mathbb{R}$, $n \tilde{p} + \lambda \tilde{p} - (n+\alpha)> 0$~and (1.3) be satisfied. Then the $n$-dimensional Hardy operator~$\mathcal{H}$~is bounded on mixed radial-angular local Morrey-type space $LML_{rad}^{\tilde{p},\lambda,q}L_{ang}^{p}(\mathbb{R}^n, |x|^{\alpha})$ with the power weight, that is,
$$
\|\mathcal{H}(f)\|_{LML_{rad}^{\tilde{p},\lambda,q}L_{ang}^{p}(\mathbb{R}^n, |x|^{\alpha})} \leq \frac{n \tilde{p}}{n \tilde{p} + \lambda \tilde{p} - (n+\alpha)} \|f\|_{LML_{rad}^{\tilde{p},\lambda,q}L_{ang}^{p}(\mathbb{R}^n, |x|^{\alpha})}
$$
for~$1 \leq p < \infty$, and
$$
\|\mathcal{H}(f)\|_{LML_{rad}^{\infty,\lambda,q}L_{ang}^{p}(\mathbb{R}^n, |x|^{\alpha})} \leq \frac{n}{n + \lambda} \|f\|_{LML_{rad}^{\infty,\lambda,q}L_{ang}^{p}(\mathbb{R}^n, |x|^{\alpha})}.
$$
Moreover,
$$
\|\mathcal{H}\|_{LML_{rad}^{\tilde{p},\lambda,q}L_{ang}^{p}(\mathbb{R}^n, |x|^{\alpha}) \to LML_{rad}^{\tilde{p},\lambda,q}L_{ang}^{p}(\mathbb{R}^n, |x|^{\alpha})}=\frac{n \tilde{p}}{n \tilde{p} + \lambda \tilde{p} - (n+\alpha)}
$$
for~$1 \leq p < \infty$, and
$$
\|\mathcal{H}\|_{LML_{rad}^{\infty,\lambda,q}L_{ang}^{p}(\mathbb{R}^n, |x|^{\alpha}) \to LML_{rad}^{\infty,\lambda,q}L_{ang}^{p}(\mathbb{R}^n, |x|^{\alpha})} = \frac{n}{n + \lambda}.
$$
\end{theorem}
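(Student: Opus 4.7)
The plan is to obtain this result as a direct specialization of Theorem~\ref{th1}. The key observation is that selecting
$$
\Phi(x) = \frac{\chi_{\{|x|>1\}}(x)}{\nu_n |x|^n},
$$
which is nonnegative, locally integrable, and radial, reduces $\widetilde{\mathcal{H}_{\Phi}}$ to the Hardy operator. Indeed, since $|x/|y|| = |x|/|y|$, the characteristic function inside $\Phi(x/|y|)$ becomes $\chi_{\{|y|<|x|\}}$, so a one-line computation yields
$$
\widetilde{\mathcal{H}_{\Phi}}(f)(x)
= \int_{\mathbb{R}^n} \frac{\chi_{\{|y|<|x|\}}\,|y|^n}{\nu_n |x|^n |y|^n} f(y)\,dy
= \frac{1}{\nu_n |x|^n}\int_{B(0,|x|)} f(y)\,dy
= \mathcal{H}(f)(x).
$$
This identification places $\mathcal{H}$ squarely within the scope of Theorem~\ref{th1}.

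The next step is to invoke Theorem~\ref{th1} directly: under hypothesis~(1.3), the operator norm of $\mathcal{H}$ on $LML_{rad}^{\tilde{p},\lambda,q}L_{ang}^{p}(\mathbb{R}^n, |x|^{\alpha})$ equals
$$
C_{\Phi,1} = \omega_n \int_0^\infty \frac{\Phi(t)}{t^{\lambda - (n+\alpha)/\tilde{p} + 1}}\,dt,
$$
provided this constant is finite. Substituting the radial profile of $\Phi$ and using the standard identity $\omega_n = n\nu_n$, this reduces to
$$
C_{\Phi,1} = n\int_1^\infty t^{(n+\alpha)/\tilde{p} - n - \lambda - 1}\,dt.
$$
Convergence at infinity is equivalent to $(n+\alpha)/\tilde{p} - n - \lambda < 0$, that is, $n\tilde{p} + \lambda\tilde{p} - (n+\alpha) > 0$, which is exactly the stated hypothesis. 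Routine integration then gives $C_{\Phi,1} = \frac{n\tilde{p}}{n\tilde{p} + \lambda\tilde{p} - (n+\alpha)}$ for $1 \leq \tilde{p} < \infty$; for $\tilde{p} = \infty$ the exponent $(n+\alpha)/\tilde{p}$ is interpreted as $0$ in the formula of Theorem~\ref{th1}, and the same computation produces $C_{\Phi,1} = \frac{n}{n+\lambda}$, matching the second estimate.

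No substantive obstacle is expected, since both boundedness and its sharpness have already been established in Theorem~\ref{th1}; the present theorem reduces entirely to the identification above together with the explicit evaluation of the associated integral. The only point demanding mild care is the endpoint $\tilde{p} = \infty$, where one must either pass to the limit in the general formula or redo the integral with $(n+\alpha)/\tilde{p}$ set to zero; both routes give the same value $n/(n+\lambda)$ and preserve the sharpness asserted by Theorem~\ref{th1}.
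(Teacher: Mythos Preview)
Your proposal is correct and follows precisely the paper's own approach: the paper presents this theorem as a direct application of Theorem~\ref{th1}, obtained by the very same choice $\Phi(x) = \nu_n^{-1}|x|^{-n}\chi_{\{|x|>1\}}(x)$, and does not supply any further argument beyond this identification. Your verification of the identity $\widetilde{\mathcal{H}_{\Phi}} = \mathcal{H}$ and the explicit evaluation of $C_{\Phi,1}$ fill in the routine details the paper leaves implicit.
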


\begin{theorem}
Let~$1 \leq p, \tilde{p} < \infty$, $1 \leq q \leq \infty$, $\alpha\in\mathbb{R}$, $0 \leq \lambda < \frac{n+\alpha}{\tilde{p}}$, and (1.3) be satisfied. Then the dual Hardy operator~$\mathcal{H}^*$~is bounded on mixed radial-angular local Morrey-type space $LML_{rad}^{\tilde{p},\lambda,q}L_{ang}^{p}(\mathbb{R}^n, |x|^{\alpha})$ with the power weight, that is,
$$
\|\mathcal{H}^*(f)\|_{LML_{rad}^{\tilde{p},\lambda,q}L_{ang}^{p}(\mathbb{R}^n, |x|^{\alpha})} \leq \frac{n \tilde{p}}{n+\alpha - \lambda \tilde{p}} \|f\|_{LML_{rad}^{\tilde{p},\lambda,q}L_{ang}^{p}(\mathbb{R}^n, |x|^{\alpha})}.
$$

Moreover,
$$
\|\mathcal{H}^*\|_{LML_{rad}^{\tilde{p},\lambda,q}L_{ang}^{p}(\mathbb{R}^n, |x|^{\alpha}) \to LML_{rad}^{\tilde{p},\lambda,q}L_{ang}^{p}(\mathbb{R}^n, |x|^{\alpha})} = \frac{n \tilde{p}}{n +\alpha- \lambda \tilde{p}}.
$$
\end{theorem}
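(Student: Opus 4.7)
The plan is to realize the dual Hardy operator $\mathcal{H}^{*}$ as a special case of the Hausdorff operator $\widetilde{\mathcal{H}_{\Phi}}$ and then invoke Theorem \ref{th1} directly. Concretely, I will take
$$
\Phi(x) = \frac{1}{\nu_n}\chi_{\{|x|<1\}}(x),
$$
which is nonnegative, radial, and locally integrable, hence satisfies the hypotheses of Theorem \ref{th1}. A short computation using $|\Phi(x/|y|)| = \nu_n^{-1}\chi_{\{|x|<|y|\}}$ shows
$$
\widetilde{\mathcal{H}_{\Phi}}(f)(x) = \int_{|y|>|x|} \frac{f(y)}{\nu_n |y|^n}\,dy = \mathcal{H}^{*}(f)(x),
$$
so every boundedness statement for $\widetilde{\mathcal{H}_{\Phi}}$ transfers verbatim to $\mathcal{H}^{*}$.

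Next, I would specialize the constant $C_{\Phi,1}$ from Theorem \ref{th1} to this $\Phi$. Since $\Phi$ is radial, $\Phi(t) = \nu_n^{-1}\chi_{\{t<1\}}$ for $t>0$, hence
$$
C_{\Phi,1} = \omega_n \int_0^\infty \frac{\Phi(t)}{t^{\lambda-\frac{n+\alpha}{\tilde p}+1}}\,dt = \frac{\omega_n}{\nu_n}\int_0^1 t^{\frac{n+\alpha}{\tilde p}-\lambda-1}\,dt.
$$
The assumption $0 \leq \lambda < \frac{n+\alpha}{\tilde p}$ is precisely what makes the integral converge; combined with $\omega_n/\nu_n = n$, one computes
$$
C_{\Phi,1} = \frac{n\tilde p}{n+\alpha - \lambda \tilde p},
$$
which is the constant claimed in the theorem.

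Finally, applying Theorem \ref{th1}(i) yields the upper bound
$$
\|\mathcal{H}^{*}(f)\|_{LML_{rad}^{\tilde p,\lambda,q}L_{ang}^{p}(\mathbb{R}^n,|x|^{\alpha})} \leq \frac{n\tilde p}{n+\alpha - \lambda \tilde p}\|f\|_{LML_{rad}^{\tilde p,\lambda,q}L_{ang}^{p}(\mathbb{R}^n,|x|^{\alpha})},
$$
and the sharpness of $C_{\Phi,1}$ as the operator norm on $LML_{rad}^{\tilde p,\lambda,q}L_{ang}^{p}(\mathbb{R}^n,|x|^{\alpha})$ (already proven in Theorem \ref{th1}) gives the equality of operator norms. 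The only mildly delicate point is bookkeeping the admissibility condition: one must check that $\lambda < \frac{n+\alpha}{\tilde p}$ is both necessary and sufficient for $C_{\Phi,1}<\infty$ with this particular $\Phi$, which is immediate from the power-function integrability criterion at the origin; no genuine obstacle arises, since all the heavy lifting has already been done in the proof of Theorem \ref{th1}.
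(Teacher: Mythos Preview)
Your proposal is correct and follows exactly the paper's approach: the paper itself observes in Section~3 that taking $\Phi(x)=\nu_n^{-1}\chi_{\{|x|<1\}}(x)$ in $\widetilde{\mathcal H_\Phi}$ yields $\mathcal H^*$, and then states the present theorem as a direct application of Theorem~\ref{th1} without further proof. Your computation of $C_{\Phi,1}=\dfrac{n\tilde p}{n+\alpha-\lambda\tilde p}$ via $\omega_n/\nu_n=n$ and the convergence check $\lambda<\dfrac{n+\alpha}{\tilde p}$ are exactly what is needed.
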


Now, let's make $\phi : [0, 1] \rightarrow [0, \infty)$ a measurable function. If we take
$$
\Phi(y) = (\omega_n |y|)^{-1} \phi(|y|^{-1}) \chi_{\{|y| > 1\}}(y)
$$
in the $\mathcal{H}_{\Phi}$, then we get the weighted Hardy-Littlewood average operator \cite{carton1984} (see also \cite{fu2015, xiao2001})
$$
H_{\phi}(f)(x) = \int_0^1 \phi(t) f(t x) dt, \quad x \in \mathbb{R}^n.
$$

If we take
$$
\Phi(y) = \omega_n^{-1} |y|^{-n+1} \phi(|y|) \chi_{\{0 < |y| < 1\}}(y)
$$
in $\mathcal{H}_{\Phi}$, we obtain the weighted Ces\`{a}ro operator
$$
G_{\phi}(f)(x) = \int_0^1 f\left( \frac{x}{t} \right) t^{-n} \phi(t) dt, \quad x \in \mathbb{R}^n.
$$


Let
$$
\phi : \overbrace{[0, 1] \times [0, 1] \times \cdots \times [0, 1]}^{m\; time}\rightarrow [0, \infty)
$$
be a measurable function. If we take
$$
\Phi(\vec{y}) = \phi(|y_1|^{-1}, |y_2|^{-1}, \ldots, |y_m|^{-1}) \prod_{i=1}^m (\omega_{n_i} |y_i|)^{-1} \chi_{\{|y_i| > 1\}}(y_i)
$$
in~$R_{\Phi}$, we get the multilinear weighted Hardy operator (see \cite{fu2015})
$$
H^m_{\phi}(\vec{f})(x) = \int_0^1 \cdots \int_0^1 \phi(\vec{t}) \prod_{i=1}^m f_i(t_i x) d\vec{t}, \quad x \in \mathbb{R}^n.
$$

If we take
$$
\Phi(\vec{y}) = \phi(|y_1|, |y_2|, \ldots, |y_m|) \prod_{i=1}^m (\omega_{n_i}^{-1} |y_i|^{-n_i+1}) \chi_{\{|y_i| < 1\}}(y_i)
$$
in~$R_{\Phi}$, we obtain the weighted multilinear Ces\`{a}ro operator (see \cite{fu2015})
$$
G^m_{\phi}(\vec{f})(x) = \int_{0 < t_1, t_2, \ldots, t_m < 1} \left( \prod_{i=1}^m f_i \left( \frac{x}{t_i} \right) t_i^{-n_i} \right) \phi(\vec{t}) d\vec{t}, \quad x \in \mathbb{R}^n.
$$

As applications of  Theorems \ref{th2} and \ref{th3}, we get the following results.

\begin{theorem}
(i) Let~$1 \leq \tilde{p},p, q \leq \infty$, $\alpha\in\mathbb{R}$ and (1.3) be satisfied. Then the weighted Hardy operator~$H_{\phi}$~is bounded on mixed radial-angular local Morrey-type space $LML_{rad}^{\tilde{p},\lambda,q}L_{ang}^{p}(\mathbb{R}^n, |x|^{\alpha})$ with the power weight, that is,
$$
\|H_{\phi}(f)\|_{LML_{rad}^{\tilde{p},\lambda,q}L_{ang}^{p}(\mathbb{R}^n, |x|^{\alpha})} \leq \int_0^1 \phi(t) t^{\lambda - \frac{n+\alpha}{\tilde{p}}} dt \|f\|_{LML_{rad}^{\tilde{p},\lambda,q}L_{ang}^{p}(\mathbb{R}^n, |x|^{\alpha})}
$$
if and only if
$$
\int_0^1 \phi(t) t^{\lambda - \frac{n+\alpha}{\tilde{p}}} dt < \infty.
$$
Moreover,
$$
\|H_{\phi}\|_{LML_{rad}^{\tilde{p},\lambda,q}L_{ang}^{p}(\mathbb{R}^n, |x|^{\alpha}) \to LML_{rad}^{\tilde{p},\lambda,q}L_{ang}^{p}(\mathbb{R}^n, |x|^{\alpha})} = \int_0^1 \phi(t) t^{\lambda - \frac{n+\alpha}{\tilde{p}}} dt.
$$
(ii) Let~$1 \leq p,  p_i, \tilde{p}, \tilde{p_i} < \infty$, $1 \leq q \leq \infty$ satisfy $\frac{1}{p} = \sum\limits_{i=1}^{m} \frac{1}{p_i}$ and $\frac{1}{\tilde{p}} = \sum\limits_{i=1}^{m} \frac{1}{\tilde{p_i}}$. Assume that $\alpha\in\mathbb{R}$, $\lambda = \sum\limits_{i=1}^m \lambda_i$~for~$i = 1, \ldots, m$, $\lambda$ satisfies (1.3), and $\lambda_i$ satisfies (2.1). If
$$
\int_0^1 \cdots \int_0^1 \phi(\vec{t}) \prod_{i=1}^m t_i^{\lambda_i - \frac{n+\alpha}{\tilde{p_i}}} d\vec{t} < \infty,
$$
then
\begin{align*}
&\|H^m_{\phi}(\vec{f})\|_{LML_{rad}^{\tilde{p},\lambda,q}L_{ang}^{p}(\mathbb{R}^n, |x|^{\alpha})} \\
&\leq \int_0^1 \cdots \int_0^1 \phi(\vec{t}) \prod_{i=1}^m t_i^{\lambda_i - \frac{n+\alpha}{\tilde{p_i}}} d\vec{t} \prod_{i=1}^m \|f_i\|_{LML_{rad}^{\tilde{p_i},\lambda_i,(q\tilde{p_i})/\tilde{p}}L_{ang}^{p_i}(\mathbb{R}^n, |x|^{\alpha})}.
\end{align*}
Moreover, if
$$
p_i \lambda_i = p_j \lambda_j, \quad i, j = 1, \ldots, m,
$$
then
$$
\|H^m_{\phi}\|_{\prod\limits_{i=1}^m LML_{rad}^{\tilde{p_i},\lambda_i,(q\tilde{p_i})/\tilde{p}}L_{ang}^{p_i}(\mathbb{R}^n, |x|^{\alpha}) \to LML_{rad}^{\tilde{p},\lambda,q}L_{ang}^{p}(\mathbb{R}^n, |x|^{\alpha})} = \int_0^1 \cdots \int_0^1 \phi(\vec{t}) \prod_{i=1}^m t_i^{\lambda_i - \frac{n+\alpha}{\tilde{p_i}}} d\vec{t}.
$$
\end{theorem}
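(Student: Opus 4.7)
The plan is to deduce both parts as direct applications of Theorems \ref{th2} and \ref{th3}, with all work reduced to identifying the correct $\Phi$ and performing a change of variables.

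For part (i), I would first verify the identity $H_\phi(f) = \mathcal{H}_\Phi(f)$ for the choice
$$
\Phi(y) = (\omega_n|y|)^{-1}\phi(|y|^{-1})\chi_{\{|y|>1\}}(y),
$$
by passing to polar coordinates $y = r\xi$ with $r>1$, $\xi\in\mathbb{S}^{n-1}$ in the definition of $\mathcal{H}_\Phi$, and then performing the substitution $t = 1/r$ to transform the integral over $(1,\infty)$ into $\int_0^1 \phi(t)f(tx)\,dt$. Since $\Phi$ is nonnegative and locally integrable (given $\phi\geq 0$ on $[0,1]$), Theorem \ref{th2} applies. Next I would compute the sharp constant $C_{\Phi,2}$ by the same polar/substitution calculation:
$$
C_{\Phi,2} = \int_{\mathbb{R}^n}\frac{\Phi(y)}{|y|^{n-\frac{n+\alpha}{\tilde p}+\lambda}}\,dy
= \int_1^\infty \phi(r^{-1})r^{-2+\frac{n+\alpha}{\tilde p}-\lambda}\,dr
= \int_0^1 \phi(t)t^{\lambda-\frac{n+\alpha}{\tilde p}}\,dt,
$$
where the last equality follows from $t = r^{-1}$. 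Finiteness of the right-hand side is therefore equivalent to $C_{\Phi,2}<\infty$, and Theorem \ref{th2}(i) delivers the boundedness together with the sharp operator-norm identity.

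For part (ii), I would analogously identify $H^m_\phi(\vec f) = R_\Phi(\vec f)$ with
$$
\Phi(\vec{y}) = \phi(|y_1|^{-1},\ldots,|y_m|^{-1})\prod_{i=1}^m (\omega_{n_i}|y_i|)^{-1}\chi_{\{|y_i|>1\}}(y_i),
$$
again via polar coordinates in each $u_i$-variable separately and the substitutions $t_i = 1/|u_i|$. Computing the constant $C_{\Phi,3}$ of Theorem \ref{th3}(i) by the same product of polar/substitution changes gives
$$
C_{\Phi,3} = \int_0^1\!\!\cdots\!\int_0^1 \phi(\vec{t})\prod_{i=1}^m t_i^{\lambda_i-\frac{n+\alpha}{\tilde p_i}}\,d\vec{t},
$$
the angular factors $\omega_{n_i}$ canceling with the $(\omega_{n_i})^{-1}$ in $\Phi$. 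Under the stated assumptions on $p,p_i,\tilde p,\tilde p_i,q,\lambda,\lambda_i$ and the finiteness hypothesis on this integral, Theorem \ref{th3}(i) then yields the boundedness inequality; the additional condition $p_i\lambda_i = p_j\lambda_j$ corresponds exactly to the hypothesis $\lambda p = \lambda_i p_i$ of Theorem \ref{th3}(i) (summing over $i$ shows both are equivalent to the common-value condition), which supplies the sharpness of the constant.

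Since both parts reduce entirely to applying previously proved theorems via an explicit choice of the Hausdorff kernel and a one-variable substitution $t = 1/r$ (or its product version), there is no substantial obstacle; the only point requiring care is the bookkeeping of exponents in the polar-coordinate computation to ensure that the surface-measure factors cancel correctly with the $\omega_n^{-1}$ or $\omega_{n_i}^{-1}$ built into $\Phi$. I would therefore present the proof in a compact form, displaying only the key change-of-variables computation of $C_{\Phi,2}$ and $C_{\Phi,3}$ and then invoking Theorems \ref{th2} and \ref{th3} respectively for the bound and its sharpness.
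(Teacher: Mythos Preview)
Your proposal is correct and follows exactly the paper's approach: the paper presents this theorem purely as an application of Theorems \ref{th2} and \ref{th3} by inserting the specific choice of $\Phi$ and computing the resulting constant, without giving any further details. Your change-of-variables computation for $C_{\Phi,2}$ and $C_{\Phi,3}$ is accurate, and the bookkeeping of the $\omega_n$ and $\omega_{n_i}$ factors is correct.
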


\begin{theorem}
(i) Let~$1 \leq \tilde{p},p, q \leq \infty$, $\alpha\in\mathbb{R}$ and (1.3) be satisfied. Then the weighted Ces\`{a}ro operator~$G_{\phi}$ is bounded on mixed radial-angular local Morrey-type space $LML_{rad}^{\tilde{p},\lambda,q}L_{ang}^{p}(\mathbb{R}^n, |x|^{\alpha})$ with the power weight, that is,
$$
\|G_{\phi}(f)\|_{LML_{rad}^{\tilde{p},\lambda,q}L_{ang}^{p}(\mathbb{R}^n, |x|^{\alpha})} \leq \int_0^1 \phi(t) t^{\frac{n+\alpha}{\tilde{p}} - \lambda - n} dt \|f\|_{LML_{rad}^{\tilde{p},\lambda,q}L_{ang}^{p}(\mathbb{R}^n, |x|^{\alpha})}
$$
if and only if
$$
\int_0^1 \phi(t) t^{\frac{n+\alpha}{\tilde{p}} - \lambda - n} dt < \infty.
$$
Moreover,
$$
\|G_{\phi}\|_{LML_{rad}^{\tilde{p},\lambda,q}L_{ang}^{p}(\mathbb{R}^n, |x|^{\alpha}) \to LML_{rad}^{\tilde{p},\lambda,q}L_{ang}^{p}(\mathbb{R}^n, |x|^{\alpha})} = \int_0^1 t^{\frac{n+\alpha}{\tilde{p}} - \lambda - n} \phi(t) dt.
$$
(ii) Let~$1 \leq p,  p_i, \tilde{p}, \tilde{p_i} < \infty$, $1 \leq q \leq \infty$ satisfy $\frac{1}{p} = \sum\limits_{i=1}^{m} \frac{1}{p_i}$ and $\frac{1}{\tilde{p}} = \sum\limits_{i=1}^{m} \frac{1}{\tilde{p_i}}$. Assume that $\alpha\in\mathbb{R}$, $\lambda = \sum\limits_{i=1}^m \lambda_i$~for~$i = 1, \ldots, m$, $\lambda$ satisfies (1.3), and $\lambda_i$ satisfies (2.1). If
$$
\int_0^1 \cdots \int_0^1 \phi(\vec{t}) \prod_{i=1}^m t_i^{\frac{n+\alpha}{\tilde{p_i}} - \lambda_i - n_i} d\vec{t} < \infty,
$$
then
\begin{align*}
&\|G^m_{\phi}(\vec{f})\|_{LML_{rad}^{\tilde{p},\lambda,q}L_{ang}^{p}(\mathbb{R}^n, |x|^{\alpha})} \\
&\leq \int_0^1 \cdots \int_0^1 \phi(\vec{t}) \prod_{i=1}^m t_i^{\frac{n+\alpha}{\tilde{p_i}} - \lambda_i - n_i} d\vec{t} \prod_{i=1}^m \|f_i\|_{LML_{rad}^{\tilde{p_i},\lambda_i,(q\tilde{p_i})/\tilde{p}}L_{ang}^{p_i}(\mathbb{R}^n, |x|^{\alpha})}.
\end{align*}
Moreover, if
$$
p_i \lambda_i = p_j \lambda_j, \quad i, j = 1, \ldots, m,
$$
then
\begin{align*}
&\|G^m_{\phi}\|_{\prod\limits_{i=1}^m LML_{rad}^{\tilde{p_i},\lambda_i,(q\tilde{p_i})/\tilde{p}}L_{ang}^{p_i}(\mathbb{R}^n, |x|^{\alpha}) \to LML_{rad}^{\tilde{p},\lambda,q}L_{ang}^{p}(\mathbb{R}^n, |x|^{\alpha})} \\
&= \int_0^1 \cdots \int_0^1 \phi(\vec{t}) \prod_{i=1}^m t_i^{\frac{n+\alpha}{\tilde{p_i}} - \lambda_i - n_i} d\vec{t}.
\end{align*}
\end{theorem}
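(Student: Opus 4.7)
The plan is to derive both parts of the theorem as immediate corollaries of Theorems \ref{th2} and \ref{th3} by substituting into $\mathcal{H}_\Phi$ and $R_\Phi$ the specific radial kernel $\Phi$ that, as indicated in the discussion just above the statement, recovers the weighted Ces\`{a}ro operator $G_\phi$ and its multilinear extension $G^m_\phi$. No new harmonic analysis is required; the task reduces to identifying the operators and converting the sharpness constants $C_{\Phi,2}$ and $C_{\Phi,3}$ into polar-coordinate integrals against $\phi$.

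For part (i), I would first verify the identification $\mathcal{H}_\Phi=G_\phi$ for $\Phi(y)=\omega_n^{-1}|y|^{-n+1}\phi(|y|)\chi_{\{0<|y|<1\}}(y)$. Passing to polar coordinates $y=r\theta'$ in the defining integral,
$$
\mathcal{H}_\Phi(f)(x)=\omega_n^{-1}\int_0^1\int_{\mathbb{S}^{n-1}}\frac{r^{-n+1}\phi(r)}{r^n}f\!\left(\frac{x}{r}\right)r^{n-1}d\sigma\,dr=\int_0^1\phi(t)t^{-n}f\!\left(\frac{x}{t}\right)dt=G_\phi(f)(x).
$$
Next I would plug the same $\Phi$ into $C_{\Phi,2}=\int_{\mathbb{R}^n}\Phi(y)|y|^{-(n-\frac{n+\alpha}{\tilde p}+\lambda)}dy$ and again switch to polar; collecting exponents yields the net power $-n+\frac{n+\alpha}{\tilde p}-\lambda$, so $C_{\Phi,2}=\int_0^1\phi(t)t^{\frac{n+\alpha}{\tilde p}-\lambda-n}dt$. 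Theorem \ref{th2} then supplies the upper bound, the converse (via the ``if and only if'' statement), and the matching lower bound, giving the exact operator norm.

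For part (ii), the argument is parallel. I would take the multilinear Ces\`{a}ro kernel
$$
\Phi(\vec y)=\phi(|y_1|,\dots,|y_m|)\prod_{i=1}^m\omega_{n_i}^{-1}|y_i|^{-n_i+1}\chi_{\{|y_i|<1\}}(y_i),
$$
so that $R_\Phi=G^m_\phi$. Substituting this $\Phi$ into $C_{\Phi,3}$ from Theorem \ref{th3}(i) and performing polar coordinates separately in each $u_i\in\mathbb{R}^{n_i}$ (each $\omega_{n_i}^{-1}$ cancelling the corresponding spherical integration), the resulting net exponent on $t_i$ is $\frac{n+\alpha}{\tilde{p_i}}-\lambda_i-n_i$, producing the claimed constant $\int_0^1\!\cdots\!\int_0^1\phi(\vec t)\prod_{i=1}^m t_i^{\frac{n+\alpha}{\tilde{p_i}}-\lambda_i-n_i}d\vec t$. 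Finally, using $\lambda=\sum_i\lambda_i$ and $1/p=\sum_i 1/p_i$, the condition $p_i\lambda_i=p_j\lambda_j$ stated here is equivalent to the sharpness hypothesis $\lambda p=\lambda_ip_i$ in Theorem \ref{th3}(i); hence the operator-norm equality transfers verbatim.

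The only obstacle is purely clerical: tracking the exponents $-n_i+1$ from the kernel against the Jacobian factor $r^{n_i-1}$ through the polar conversion, and verifying the algebraic equivalence of the two forms of the sharpness condition. No genuine analytic difficulty arises, since all boundedness and sharpness work is already packaged inside Theorems \ref{th2} and \ref{th3}.
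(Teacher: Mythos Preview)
Your proposal is correct and is exactly the approach the paper takes: the theorem is presented in Section~3 merely as an application of Theorems~\ref{th2} and~\ref{th3}, obtained by substituting the indicated radial kernel $\Phi$ and reducing $C_{\Phi,2}$, $C_{\Phi,3}$ via polar coordinates. In fact the paper gives no further details, so your exponent bookkeeping and the observation that $p_i\lambda_i=p_j\lambda_j$ is equivalent to $\lambda p=\lambda_ip_i$ under $\lambda=\sum_i\lambda_i$, $1/p=\sum_i 1/p_i$ are more explicit than what appears there.
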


In addition, Liu et al. \cite{LZ2025} establish sharp bounds for the $m$-linear Hardy operator on the mixed radial-angular homogeneous central total Morrey spaces. Naturally, we will consider to establishing sharp bounds for the Hausdorff operator $\widetilde{\mathcal{H}_{\Phi}}(f)$ on the mixed radial-angular homogeneous central Morrey space and the mixed radial-angular homogeneous central total Morrey space.

\begin{definition}\cite{LZ2025}
Let $1 < \tilde{p}, p < \infty$, $0\leq\lambda<n$ and $f \in L^{\tilde{p}}_{\text{rad}, \text{loc}} L^{p}_{\text{ang}}((0, \infty) \times S^{n-1})$. A function $f$ is said to belong to the mixed radial-angular homogeneous central Morrey space $\mathcal{\dot{B}}{L}^{\tilde{p}, \lambda}_{\text{rad}} {L}^{p}_{\text{ang}}(\mathbb{R}^n)$, if
$$
\| f \|_{\mathcal{\dot{B}}{L}^{\tilde{p}, \lambda}_{\text{rad}} {L}^{p}_{\text{ang}}(\mathbb{R}^n)} = \sup_{t > 0} t^{-\frac{\lambda}{\tilde{p}}}\left(\int_0^t \left( \int_{S^{n-1}} |f(\rho \theta)|^{p} d\sigma(\theta) \right)^{\frac{\tilde{p}}{p}} \rho^{n - 1} d\rho \right)^{\frac{1}{\tilde{p}}} < \infty.
$$
\end{definition}

\begin{definition}\cite{LZ2025}
Let $1 < \tilde{p}, p < \infty$, $0\leq\lambda<n$ and $f \in L^{\tilde{p}}_{\text{rad}, \text{loc}} L^{p}_{\text{ang}}((0, \infty) \times S^{n-1})$. A function $f$ is said to belong to the mixed radial-angular homogeneous central total Morrey space ${\mathcal{\dot{B}}}{L}^{\tilde{p}, \lambda, \mu}_{\text{rad}} {L}^{p}_{\text{ang}}(\mathbb{R}^n)$, if
$$
\| f \|_{{\mathcal{\dot{B}}}{L}^{\tilde{p}, \lambda, \mu}_{\text{rad}} {L}^{p}_{\text{ang}}(\mathbb{R}^n)} = \sup_{t > 0}[t]_1^{-\frac{\lambda}{\tilde{p}}} \left[\frac{1}{t}\right]_1^{\frac{\mu}{\tilde{p}}} \left( \int_0^t \left( \int_{S^{n-1}} |f(\rho \theta)|^{p} d\sigma(\theta) \right)^{\frac{\tilde{p}}{p}} \rho^{n - 1} d\rho \right)^{\frac{1}{\tilde{p}}} < \infty.
$$
\end{definition}

\begin{theorem}\label{th16}
Let~$f \in {\mathcal{\dot{B}}}{L}_{rad}^{\tilde{p},\lambda}{L}_{ang}^{p_1}(\mathbb{R}^n)$, $1 < p, p_1, \tilde{p}< \infty$, and $0<\lambda<n$. Then the operator $\widetilde{\mathcal{H}_{\Phi}}(f)$ maps ${\mathcal{\dot{B}}}{L}_{rad}^{\tilde{p},\lambda}{L}_{ang}^{p_1}(\mathbb{R}^n)$ to ${\mathcal{\dot{B}}}{L}_{rad}^{\tilde{p},\lambda}{L}_{ang}^{p}(\mathbb{R}^n)$ with norm equal to the constant
$$
\omega_n^{\frac{1}{p}-\frac{1}{p_1}}\frac{\tilde{p}n}{\lambda-n+\tilde{p}n}.
$$
\end{theorem}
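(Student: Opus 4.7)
The plan is to adapt the approach of Theorem~\ref{th1}(i) with $q=\infty$ to the homogeneous central Morrey setting, tracking carefully the two differing angular exponents $p$ (target) and $p_1$ (source), which is what produces the explicit factor $\omega_n^{1/p-1/p_1}$. Since $\Phi$ is implicitly radial, passing to polar coordinates yields
\[
\widetilde{\mathcal{H}_\Phi}(f)(x)=\int_0^\infty\int_{S^{n-1}}f\bigl(\tfrac{|x|}{t}y'\bigr)\,d\sigma(y')\,\frac{\Phi(t)}{t}\,dt,
\]
a radial function of $x$. Consequently its angular $L^p$ norm at radius $\rho$ is exactly $\omega_n^{1/p}$ times the radial value, giving the $\omega_n^{1/p}$ factor, while H\"{o}lder on $S^{n-1}$ (exponents $p_1,p_1'$) yields $\int_{S^{n-1}}|f(sy')|\,d\sigma(y')\le\omega_n^{1-1/p_1}\|f(s\cdot)\|_{L^{p_1}(S^{n-1})}$, contributing an additional $\omega_n^{1-1/p_1}$.

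Next, I apply Minkowski's integral inequality to move the $t$-integral outside the $\rho$-integral and substitute $\rho=ts$, producing a factor $t^{n/\tilde p}$ together with an integral in $s$ on $(0,r/t)$. Multiplying by $r^{-\lambda/\tilde p}$ and writing $r^{-\lambda/\tilde p}t^{n/\tilde p}=(r/t)^{-\lambda/\tilde p}t^{(n-\lambda)/\tilde p}$, the factor $(r/t)^{-\lambda/\tilde p}\bigl(\int_0^{r/t}\|f(s\cdot)\|_{L^{p_1}(S^{n-1})}^{\tilde p}s^{n-1}\,ds\bigr)^{1/\tilde p}$ is bounded by $\|f\|_{\mathcal{\dot{B}}L_{rad}^{\tilde p,\lambda}L_{ang}^{p_1}}$. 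Taking $\sup_{r>0}$ delivers
\[
\|\widetilde{\mathcal{H}_\Phi}(f)\|_{\mathcal{\dot{B}}L_{rad}^{\tilde p,\lambda}L_{ang}^{p}}\le\omega_n^{1/p-1/p_1}\Bigl(\omega_n\!\int_0^\infty\Phi(t)\,t^{(n-\lambda)/\tilde p-1}\,dt\Bigr)\|f\|_{\mathcal{\dot{B}}L_{rad}^{\tilde p,\lambda}L_{ang}^{p_1}}.
\]
For the specific choice $\Phi(y)=\chi_{\{|y|>1\}}(y)/(\nu_n|y|^n)$, under which $\widetilde{\mathcal{H}_\Phi}$ becomes the $n$-dimensional Hardy operator $\mathcal H$ of Section~3, the parenthesised integral evaluates via $\omega_n=n\nu_n$ to $\tilde pn/(\lambda-n+\tilde pn)$; convergence at infinity rests on $\tilde p>1$ and $\lambda<n$, which jointly give $(n-\lambda)/\tilde p-n<0$.

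For sharpness I test against the pure power $f_0(x)=|x|^{(\lambda-n)/\tilde p}$. Since $\lambda>0$, the computation $\|f_0(\rho\cdot)\|_{L^{p_1}(S^{n-1})}=\omega_n^{1/p_1}\rho^{(\lambda-n)/\tilde p}$ gives $\|f_0\|_{\mathcal{\dot{B}}L_{rad}^{\tilde p,\lambda}L_{ang}^{p_1}}=\omega_n^{1/p_1}\lambda^{-1/\tilde p}$. On the target side, evaluating the radial integral explicitly produces $\widetilde{\mathcal{H}_\Phi}(f_0)(x)=\tfrac{\tilde pn}{\lambda-n+\tilde pn}|x|^{(\lambda-n)/\tilde p}$, so $\|\widetilde{\mathcal{H}_\Phi}(f_0)\|_{\mathcal{\dot{B}}L_{rad}^{\tilde p,\lambda}L_{ang}^{p}}=\tfrac{\tilde pn}{\lambda-n+\tilde pn}\,\omega_n^{1/p}\lambda^{-1/\tilde p}$; the ratio equals exactly $\omega_n^{1/p-1/p_1}\tilde pn/(\lambda-n+\tilde pn)$, matching the upper bound. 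The main obstacle I expect is the bookkeeping of the $\omega_n$ powers from the angular H\"{o}lder step combined with the radial form of the target, and, more critically, pinning down the implicit $\Phi$ for which the stated constant is exact: for a generic admissible $\Phi$ one would retain the integral $\omega_n\int_0^\infty\Phi(t)t^{(n-\lambda)/\tilde p-1}dt$ in the final expression.
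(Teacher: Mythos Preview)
Your proposal is correct and follows essentially the same route the paper indicates: the paper gives no detailed proof for Theorem~\ref{th16}, merely stating that it is ``simple imitation'' of the earlier arguments (Theorem~\ref{th1} with $q=\infty$, $\alpha=0$, and $\lambda$ rescaled to $\lambda/\tilde p$) together with the method of \cite{LZ2025}. Your derivation of the upper bound via the radial polar form of $\widetilde{\mathcal H_\Phi}$, Minkowski in the $t$-variable, and H\"{o}lder on $S^{n-1}$ with exponent $p_1$ reproduces that scheme exactly, with the correct bookkeeping of the $\omega_n$ powers. You are also right to flag that the statement leaves $\Phi$ unspecified while giving a numerically explicit constant; the constant $\tilde p n/(\lambda-n+\tilde p n)$ is that of the Hardy operator $\mathcal H$ (the choice $\Phi(y)=\chi_{\{|y|>1\}}/(\nu_n|y|^n)$ introduced just before Theorem~3.1), so your interpretation is the intended one.

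One genuine, if minor, difference: for sharpness you test against the exact power $f_0(x)=|x|^{(\lambda-n)/\tilde p}$, which lies in the homogeneous space $\mathcal{\dot B}L^{\tilde p,\lambda}_{rad}L^{p_1}_{ang}$ because the dilation invariance makes the defining supremum constant in $r$. This is cleaner than the truncated $\epsilon$-family $|x|^{\beta}\chi_{\{|x|>1\}}$ with $\epsilon\to0^+$ used in the paper's proof of Theorem~\ref{th1}; that device was needed there only because the (inhomogeneous) local Morrey-type norm does not admit pure powers. Your direct extremiser saves a limiting step and delivers the lower bound in one line.
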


\begin{theorem}\label{th17}
Let $f \in {\mathcal{\dot{B}}}{L}_{rad}^{\tilde{p},\lambda,\mu}{L}_{ang}^{p_1}(\mathbb{R}^n)$, $0 \leq \mu<\lambda < n$, $1 < p, p_1, \tilde{p} < \infty$. Then the operator $\widetilde{\mathcal{H}_{\Phi}}(f)$ maps ${\mathcal{\dot{B}}}{L}_{rad}^{\tilde{p},\lambda,\mu}{L}_{ang}^{p_1}(\mathbb{R}^n)$ to $\mathcal{\dot{B}} {L}_{rad}^{\tilde{p},\lambda,\mu}{L}_{ang}^{p}(\mathbb{R}^n)$ with norm equal to the constant
$$
\omega_n^{\frac{1}{p}-\frac{1}{p_1}}\frac{\tilde{p}n}{\mu-n+\tilde{p}n}.
$$
\end{theorem}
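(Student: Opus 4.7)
The plan is to mirror the structure of Theorem \ref{th16}, upgrading the single-parameter Morrey weight $t^{-\lambda/\tilde{p}}$ to the two-parameter total Morrey weight $[t]_1^{-\lambda/\tilde{p}}[1/t]_1^{\mu/\tilde{p}}$. First I would pass to polar coordinates,
$$\widetilde{\mathcal{H}_{\Phi}}(f)(x)=\int_0^\infty\!\!\int_{\mathbb{S}^{n-1}} f\!\left(\tfrac{|x|}{s}y'\right)d\sigma(y')\,\frac{\Phi(s)}{s}\,ds,$$
and observe that because the inner $\mathbb{S}^{n-1}$-integral depends only on $|x|/s$, the output is a radial function, so its $L^{p}_{ang}$-norm at radius $\rho$ collapses to a numeric multiple of its value; H\"older's inequality on $\mathbb{S}^{n-1}$ then replaces the angular integral of $f$ by its $L^{p_1}_{ang}$-norm, and these two observations together are what produce the $\omega_n^{1/p-1/p_1}$ prefactor appearing in the statement.

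Next, for each fixed $t>0$ I would apply Minkowski's integral inequality in the $s$-variable to exchange the $s$-integral with the $L^{\tilde{p}}_{rad}$-norm over $\rho\in(0,t)$, and then the substitution $y=\rho/s$, producing
$$\omega_n^{1/p-1/p_1}\int_0^\infty s^{n/\tilde{p}-1}\Phi(s)\left(\int_0^{t/s}\!\Bigl(\int_{\mathbb{S}^{n-1}}|f(y\theta)|^{p_1}d\sigma(\theta)\Bigr)^{\!\tilde{p}/p_1}\!y^{n-1}\,dy\right)^{\!1/\tilde{p}}\!ds.$$
The definition of the total Morrey norm then yields the pointwise bound
$$\left(\int_0^{t/s}\!\cdots\right)^{\!1/\tilde{p}}\le[t/s]_1^{\lambda/\tilde{p}}[s/t]_1^{-\mu/\tilde{p}}\|f\|_{{\mathcal{\dot{B}}}L^{\tilde{p},\lambda,\mu}_{rad}L^{p_1}_{ang}}.$$
Multiplying by the outer weight $[t]_1^{-\lambda/\tilde{p}}[1/t]_1^{\mu/\tilde{p}}$ and taking $\sup_{t>0}$ reduces the problem to computing a single weighted $s$-integral of $\Phi$, which under the hypotheses $0\le\mu<\lambda<n$ with $\mu>n(1-\tilde{p})$ evaluates to $\tilde{p}n/(\mu-n+\tilde{p}n)$ after a split at $s=t$ and the cases $t\lessgtr 1$.

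For the matching lower bound I would test on the power-piecewise extremizer
$$f_\epsilon(x)=|x|^{(\mu-n)/\tilde{p}+\epsilon}\chi_{\{|x|\le 1\}}+|x|^{(\lambda-n)/\tilde{p}-\epsilon}\chi_{\{|x|>1\}}$$
for small $\epsilon>0$, compute both $\|f_\epsilon\|$ and $\|\widetilde{\mathcal{H}_{\Phi}}(f_\epsilon)\|$ in closed form via the polar representation (exactly as in Theorem \ref{th16}), and let $\epsilon\to 0^+$. The main obstacle is the case analysis in the last reduction: the outer weight $[t]_1^{-\lambda/\tilde{p}}[1/t]_1^{\mu/\tilde{p}}$ and the inner weight $[t/s]_1^{\lambda/\tilde{p}}[s/t]_1^{-\mu/\tilde{p}}$ together partition $(t,s)\in(0,\infty)^2$ into four regions, and one must verify that the supremum is attained in the regime $\{t<1,\ s>t\}$ where the $\mu$-tail is binding---since $\mu<\lambda$, that regime is precisely the one producing $\tilde{p}n/(\mu-n+\tilde{p}n)$ rather than the Theorem \ref{th16} constant $\tilde{p}n/(\lambda-n+\tilde{p}n)$, and matching the test-function lower bound to this same region is what forces equality of norms.
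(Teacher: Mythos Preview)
The paper does not provide a detailed proof of this theorem---it states only that Theorems~\ref{th16}--\ref{th18} follow by ``simple imitation'' of the Section~2 arguments together with the method of \cite{LZ2025}---and your proposal is exactly that imitation: polar coordinates, radiality of the output plus H\"older on the sphere for the $\omega_n^{1/p-1/p_1}$ factor, Minkowski in $s$, the dilation $y=\rho/s$, and the $t\lessgtr1$, $s\lessgtr t$ case split to extract the constant $\tilde{p}n/(\mu-n+\tilde{p}n)$. One minor simplification: since the Hardy kernel is supported on $\{s>1\}$, the region $\{t<1,\ s\le t\}$ is empty (so only three of your four regions survive), and for the lower bound the single-piece test function $f_\epsilon(x)=|x|^{(\mu-n)/\tilde{p}+\epsilon}\chi_{\{|x|\le1\}}$ already achieves the sharp constant as $\epsilon\to0^+$, so the $\lambda$-tail in your two-piece extremizer is not needed.
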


\begin{theorem}\label{th18}
Let $f \in {\mathcal{\dot{B}}}{L}_{rad}^{\tilde{p},\lambda,\mu}{L}_{ang}^{p_1}(\mathbb{R}^n)$, $0 \leq \lambda<\mu < n$, $1 < p, p_1, \tilde{p} < \infty$. Then the operator $\widetilde{\mathcal{H}_{\Phi}}(f)$ maps ${\mathcal{\dot{B}}}{L}_{rad}^{\tilde{p},\lambda,\mu}{L}_{ang}^{p_1}(\mathbb{R}^n)$ to $\mathcal{\dot{B}} {L}_{rad}^{\tilde{p},\lambda,\mu}{L}_{ang}^{p}(\mathbb{R}^n)$ with norm equal to the constant
$$
\omega_n^{\frac{1}{p}-\frac{1}{p_1}}\frac{\tilde{p}n}{\lambda-n+\tilde{p}n}.
$$
\end{theorem}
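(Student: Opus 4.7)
The plan is to mirror the proof of Theorem \ref{th17}, interchanging the roles of $\lambda$ and $\mu$ and relocating the extremizer from the interior of the unit ball to its exterior. First, I would write $\widetilde{\mathcal{H}_{\Phi}}(f)$ in the polar form used in the proof of Theorem \ref{th1}, observing that the output is radial in $x$. H\"older's inequality on $\mathbb{S}^{n-1}$, combined with Minkowski's integral inequality in $\rho$ and the substitution $\rho\mapsto\rho/t$, reduces the mixed radial--angular integral of $\widetilde{\mathcal{H}_\Phi}(f)$ over the ball of radius $r$ to a single $dt$-integral in which the analogous quantity for $f$ over the ball of radius $r/t$ appears, weighted by $t^{n/\tilde{p}}\Phi(t)/t$, and multiplied by the spherical prefactor $\omega_n^{1+1/p-1/p_1}$.

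Next, I would multiply the resulting bound by the norm weight $[r]_1^{-\lambda/\tilde{p}}[1/r]_1^{\mu/\tilde{p}}$ and split it as
$$
[r]_1^{-\lambda/\tilde{p}}[1/r]_1^{\mu/\tilde{p}} = \frac{[r]_1^{-\lambda/\tilde{p}}[1/r]_1^{\mu/\tilde{p}}}{[r/t]_1^{-\lambda/\tilde{p}}[t/r]_1^{\mu/\tilde{p}}}\cdot [r/t]_1^{-\lambda/\tilde{p}}[t/r]_1^{\mu/\tilde{p}}.
$$
The second factor, together with the inner mixed radial--angular integral over the ball of radius $r/t$, is bounded by $\|f\|_{\mathcal{\dot{B}} L_{rad}^{\tilde{p},\lambda,\mu}L_{ang}^{p_1}(\mathbb{R}^n)}$. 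Taking $\sup_{r>0}$ of the residual first factor and pairing it against $t^{n/\tilde{p}-1}\Phi(t)$ then produces the operator norm. Because $\lambda<\mu$, a case analysis across the four regions $\{r\lessgtr 1,\ r/t\lessgtr 1\}$ shows that the binding regime is $r\to\infty$, which yields a pure $t^{-\lambda/\tilde{p}}$ factor; the resulting $t$-integral evaluates to $\omega_n^{1/p-1/p_1}\tfrac{\tilde{p}n}{\lambda-n+\tilde{p}n}$, the claimed constant.

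For sharpness, I would test against $f_0(x)=|x|^{(\lambda-n)/\tilde{p}}\chi_{\{|x|>1\}}(x)$, which is supported outside the unit ball (so the $\mu$-component of the norm is inactive) and saturates the $\lambda$-power as $r\to\infty$. A direct computation of $\|f_0\|_{\mathcal{\dot{B}} L_{rad}^{\tilde{p},\lambda,\mu}L_{ang}^{p_1}(\mathbb{R}^n)}$ and of $\|\widetilde{\mathcal{H}_\Phi}(f_0)\|_{\mathcal{\dot{B}} L_{rad}^{\tilde{p},\lambda,\mu}L_{ang}^{p}(\mathbb{R}^n)}$ through the polar formula (the inner $t$-integral becomes a complete integral over $(0,\infty)$ as $|x|\to\infty$) produces the matching lower bound.

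The main obstacle I anticipate is the four-region case analysis of the weight ratio: under $\lambda<\mu$ one must carefully verify that the $r\to\infty$ regime indeed governs the supremum (in contrast with Theorem \ref{th17}, where $r\to 0^+$ dominates) and that the residual factor remains uniformly bounded by the dominant one in the three non-extremal regions. Once this dichotomy is secured, the remainder of the argument is a routine adaptation of the proof of Theorem \ref{th17}.
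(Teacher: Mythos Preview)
The paper does not supply a detailed proof of Theorem \ref{th18}; it only remarks that, ``by combining the previous ideas\dots and the method in \cite{LZ2025}, the proof of Theorems \ref{th16}, \ref{th17} and \ref{th18} are only simple imitation.'' Your plan is precisely the kind of imitation the paper has in mind: the polar representation of $\widetilde{\mathcal{H}_\Phi}$, the Minkowski/H\"older reduction, and the scaling substitution $\rho\mapsto\rho/t$ are exactly the ingredients used in the proof of Theorem \ref{th1}, and the treatment of the two-parameter weight $[r]_1^{-\lambda/\tilde p}[1/r]_1^{\mu/\tilde p}$ via a four-region case analysis is what one expects from the method of \cite{LZ2025}. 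So your approach is consistent with the paper's.

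One small point worth tightening: the extremizer $f_0(x)=|x|^{(\lambda-n)/\tilde p}\chi_{\{|x|>1\}}$ sits exactly at the borderline exponent, and for such functions the supremum defining the norm is typically attained only in the limit $r\to\infty$ (and may fail to be finite when $\lambda=0$). In the proof of Theorem \ref{th1} the paper avoids this by taking $f_0(x)=|x|^{\beta}\chi_{\{|x|>1\}}$ with $\beta$ strictly below the critical value and then passing $\epsilon\to0^+$. You will likely need the same $\epsilon$-perturbation here to make the sharpness argument rigorous, but this is a routine adjustment and does not affect the structure of your plan.
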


By combining the previous ideas obtained the results of on mixed radial-angular homogeneous central total Morrey space and the method in \cite{LZ2025}, the proof of Theorems \ref{th16}, \ref{th17} and \ref{th18} are only simple imitation.

\section{Supplementary results}
To investigate the complementary local Morrey-type space, we will modify the above four multilinear Hausdorff operators \cite{Wei2025}.

The first multilinear Hausdorff operator is modified to
$$
\dot{R}_{\Phi}(\vec{f})(x) = \int_{\mathbb{R}^{nm}} \frac{\Phi(\vec{u})}{\prod\limits_{i=1}^m |u_i|^n} \prod_{i=1}^m f_i\left(\frac{x}{|u_i|}\right) d\vec{u}.
$$

The second multilinear Hausdorff operator is modified to
$$
\widetilde{\dot{R}_{\Phi}}(\vec{f})(x) = \int_{\mathbb{R}^{nm}} \frac{\Phi\left(\frac{x}{|u_1|}, \frac{x}{|u_2|}, \cdots, \frac{x}{|u_m|}\right)}{\prod\limits_{i=1}^m |u_i|^n} \prod_{i=1}^m f_i(u_i) d\vec{u}.
$$

The third multilinear Hausdorff operator is modified to
$$
\dot{S}_{\Phi}(\vec{f})(x) = \int_{\mathbb{R}^{nm}} \frac{\Phi(\vec{u})}{|\vec{u}|^{nm}} \prod_{i=1}^m f_i\left(\frac{x}{|u_i|}\right) d\vec{u}.
$$

The fourth multilinear Hausdorff operator is modified to
$$
\widetilde{\dot{S}_{\Phi}}(\vec{f})(x) = \int_{\mathbb{R}^{nm}} \frac{\Phi\left(\frac{x}{|\vec{u}|}\right)}{|\vec{u}|^{nm}} \prod_{i=1}^m f_i(u_i) d\vec{u}.
$$

We formulate the main conclusions as follows.
\begin{theorem}
Assume that~$\Phi$~is a nonnegative, locally integrable, radial function.\\
Let~$1 \leq p, \tilde{p}, q \leq \infty$, $\alpha\in\mathbb{R}$ and (1.3) be satisfied. Then~$\widetilde{\mathcal{H}_{\Phi}}$~is bounded on the complementary local Morrey-type space~${}^cLML_{rad}^{\tilde{p},\lambda,q}L_{ang}^{p}(\mathbb{R}^n, |x|^{\alpha})$, that is,
$$
\|\widetilde{\mathcal{H}_{\Phi}}(f)\|_{{}^cLML_{rad}^{\tilde{p},\lambda,q}L_{ang}^{p}(\mathbb{R}^n, |x|^{\alpha})} \leq C_{\Phi,7}\|f\|_{{}^cLML_{rad}^{\tilde{p},\lambda,q}L_{ang}^{p}(\mathbb{R}^n, |x|^{\alpha})}
$$
if and only if
$$
C_{\Phi,7} = \omega_n \int_{0}^{\infty} \frac{\Phi(t)}{t^{\lambda-\frac{n+\alpha}{\tilde{p}} + 1}} dt < \infty.
$$
Moreover,
$$
\|\widetilde{\mathcal{H}_{\Phi}}\|_{{}^cLML_{rad}^{\tilde{p},\lambda,q}L_{ang}^{p}(\mathbb{R}^n, |x|^{\alpha}) \to {}^cLML_{rad}^{\tilde{p},\lambda,q}L_{ang}^{p}(\mathbb{R}^n, |x|^{\alpha})} = C_{\Phi,7}.
$$
\end{theorem}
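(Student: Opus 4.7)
The plan is to mirror the proof of Theorem \ref{th1} with one structural modification: every interior radial integral $\int_0^r(\cdots)\rho^{n+\alpha-1}d\rho$ in that argument is replaced by $\int_r^\infty(\cdots)\rho^{n+\alpha-1}d\rho$, reflecting the passage from $B(0,r)$ to ${}^cB(0,r)$. Using that $\Phi$ is radial to write $\widetilde{\mathcal{H}_{\Phi}}(f)(x)=\int_0^\infty\int_{\mathbb{S}^{n-1}} f(\tfrac{|x|}{t}y')\,d\sigma(y')\,\tfrac{\Phi(t)}{t}dt$, then applying Minkowski's integral inequality in $\rho$ and H\"{o}lder's inequality on $\mathbb{S}^{n-1}$ exactly as in Theorem \ref{th1}, followed by the change of variables $s=\rho/t$, one obtains
$$\left(\int_r^\infty\Bigl(\int_{\mathbb{S}^{n-1}}|\widetilde{\mathcal{H}_{\Phi}}(f)(\rho\theta)|^p d\sigma(\theta)\Bigr)^{\tilde{p}/p}\rho^{n+\alpha-1}d\rho\right)^{1/\tilde{p}}\leq\omega_n\int_0^\infty t^{\frac{n+\alpha}{\tilde{p}}}A(r/t)\,\tfrac{\Phi(t)}{t}dt,$$
where $A(\rho)=\bigl(\int_\rho^\infty\|f(s\cdot)\|_{L^p(\mathbb{S}^{n-1})}^{\tilde{p}}s^{n+\alpha-1}ds\bigr)^{1/\tilde{p}}$ is precisely the radial factor in $\|\cdot\|_{{}^cLML_{rad}^{\tilde{p},\lambda,q}L_{ang}^{p}}$. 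Writing $r^{-\lambda}=t^{-\lambda}(r/t)^{-\lambda}$ and either taking a supremum over $r$ (when $q=\infty$) or applying Minkowski's integral inequality once more to interchange $L^q(dr/r)$ with the $t$-integral (when $q<\infty$) then isolates $\omega_n\int_0^\infty \Phi(t) t^{(n+\alpha)/\tilde{p}-\lambda-1}dt=C_{\Phi,7}$, proving the upper estimate $\|\widetilde{\mathcal{H}_{\Phi}}\|\leq C_{\Phi,7}$.

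For the matching lower bound, which also yields the necessity of $C_{\Phi,7}<\infty$, I will construct a one-parameter family $\{f_\epsilon\}_{\epsilon>0}$ of power-type extremizers analogous to the functions built in Theorem \ref{th1}, but with the support on the opposite side of the unit sphere so that $f_\epsilon\in{}^{c}LML_{rad}^{\tilde{p},\lambda,q}L_{ang}^{p}(\mathbb{R}^n,|x|^\alpha)$. Taking $f_\epsilon(x)=|x|^\beta\chi_{E_\epsilon}(x)$ with $E_\epsilon$ a radial region and $\beta$ close to $\lambda-(n+\alpha)/\tilde{p}$, the polar-form identity gives $\widetilde{\mathcal{H}_{\Phi}}(f_\epsilon)(x)=\omega_n|x|^\beta\int_{I(|x|)}\Phi(t) t^{-\beta-1}dt$, with $I(|x|)$ the appropriate subinterval of $(0,\infty)$. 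Evaluating the ${}^{c}LML$-norms of both $f_\epsilon$ and $\widetilde{\mathcal{H}_{\Phi}}(f_\epsilon)$ via the same Beta-function identities used in Theorem \ref{th1}, then letting $\epsilon\to 0^+$, should recover $C_{\Phi,7}$ as the operator norm.

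The main obstacle is the sharpness half: because the $r$-integration in the ${}^cLML$-norm is over $(r,\infty)$ rather than $(0,r)$, the admissible supports and the admissible sign of the deviation $\beta-(\lambda-(n+\alpha)/\tilde{p})$ are flipped relative to Theorem \ref{th1}. One must therefore pick the extremizer (and, separately, the boundary cases of condition (1.3)) so that both the $f_\epsilon$-norm is finite and the interval $I(|x|)$ can be enlarged to $(0,\infty)$ in the limit $\epsilon\to 0^+$. Once the correct family is in place, the remaining Beta-function bookkeeping is a direct transcription of the corresponding step in Theorem \ref{th1}.
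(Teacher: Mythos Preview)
Your proposal is correct and matches the paper's approach exactly: the paper omits the proof entirely, stating only that ``the proof methods for the above theorems are similar to the ideas of the conclusions in section 2,'' which is precisely the transcription-with-flipped-radial-integrals that you outline. Your identification of the main obstacle (reversing the support and the sign of $\beta-(\lambda-(n+\alpha)/\tilde{p})$ in the extremizer so that membership in ${}^cLML$ and the enlargement of $I(|x|)$ both work) is the one genuine adaptation needed, and the rest is indeed bookkeeping parallel to Theorem~\ref{th1}.
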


\begin{theorem}
Assume that~$\Phi$~is a nonnegative locally integrable function.\\
Let~$1 \leq p, \tilde{p},q \leq \infty$, $\alpha\in\mathbb{R}$ and (1.3) be satisfied. Then~$\mathcal{H}_{\Phi}$~is bounded on the complementary local Morrey-type space~${}^cLML_{rad}^{\tilde{p},\lambda,q}L_{ang}^{p}(\mathbb{R}^n, |x|^{\alpha})$, that is,
$$
\|\mathcal{H}_{\Phi}(f)\|_{{}^cLML_{rad}^{\tilde{p},\lambda,q}L_{ang}^{p}(\mathbb{R}^n, |x|^{\alpha})} \leq C_{\Phi,8}\|f\|_{{}^cLML_{rad}^{\tilde{p},\lambda,q}L_{ang}^{p}(\mathbb{R}^n, |x|^{\alpha})}
$$
if and only if
$$
C_{\Phi,8} = \int_{\mathbb{R}^n} \frac{\Phi(y)}{|y|^{n - \frac{n+\alpha}{\tilde{p}} + \lambda}} dy < \infty.
$$
Moreover,
$$
\|\mathcal{H}_{\Phi}\|_{{}^cLML_{rad}^{\tilde{p},\lambda,q}L_{ang}^{p}(\mathbb{R}^n, |x|^{\alpha}) \to {}^cLML_{rad}^{\tilde{p},\lambda,q}L_{ang}^{p}(\mathbb{R}^n, |x|^{\alpha})} = C_{\Phi,8}.
$$
\end{theorem}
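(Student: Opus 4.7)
The plan is to mirror the proof of Theorem~\ref{th2} almost verbatim, replacing the inner radial integration over $(0,r)$ by one over $(r,\infty)$ throughout, and reversing the support of the extremal test functions accordingly.

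For the sufficiency and the upper bound, I would start from the definition
$$
\mathcal{H}_\Phi(f)(x)=\int_{\mathbb{R}^n}\frac{\Phi(y)}{|y|^n}f\!\left(\frac{x}{|y|}\right)dy
$$
and apply Minkowski's integral inequality in $y$ to pull the $y$-integration outside the mixed radial-angular norm over $\{|x|>r\}$. Using $|x/|y||=|x|/|y|$, the change of variables $s=\rho/|y|$ converts the inner expression into $|y|^{(n+\alpha)/\tilde p}$ times the complementary mixed norm of $f$ on $\{|z|>r/|y|\}$. Writing $r^{-\lambda}=(r/|y|)^{-\lambda}|y|^{-\lambda}$ concentrates the $y$-dependent weight as $\Phi(y)\,|y|^{(n+\alpha)/\tilde p-n-\lambda}$, which integrates against $dy$ to $C_{\Phi,8}$. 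The cases $q=\infty$ and $1\le q<\infty$ are closed as in Theorem~\ref{th2}: the former by taking the supremum in $r$, the latter by one more Minkowski inequality in the outer $L^q(dr/r)$, together with the dilation invariance of the ${}^{c}LML$-norm up to a power of $|y|$.

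For the matching lower bound, I would use radial power-type extremals whose support is chosen so that the ${}^{c}LML$-norm is finite. Concretely, I expect to take a function of the form $f_\epsilon(x)=|x|^\beta\chi_E(x)$ with $E=\{|x|>1\}$ or $E=\{|x|<1\}$ depending on the sign of $\beta+(n+\alpha)/\tilde p$, and with $\beta$ a small perturbation of the critical exponent $\lambda-(n+\alpha)/\tilde p$. A direct calculation yields $\mathcal{H}_\Phi(f_\epsilon)(x)=|x|^\beta\int_{\text{window}}\Phi(y)\,|y|^{-n-\beta}\,dy$, where the window is either $\{|y|<|x|\}$ or $\{|y|>|x|\}$ according to $E$; truncating the $y$-integration to a ball of radius $\epsilon^{-1}$ (or its complement) and the $\rho$-integration in the ${}^{c}LML$-norm to the corresponding half-line isolates, via the Beta-function identity already used in Theorem~\ref{th1}, a factor that converges to $C_{\Phi,8}$ as $\epsilon\to0^+$. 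The degenerate cases $p=\infty$ or $\tilde p=\infty$ will be handled exactly by the substitute extremals employed in the analogous cases of Theorem~\ref{th2}.

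The main obstacle will be the correct case analysis for the support of the extremals: one must decide, according to the sign of $\lambda-(n+\alpha)/\tilde p$ and the value of $q$, whether to take $\chi_{\{|x|<1\}}$ or $\chi_{\{|x|>1\}}$, and to flip the cut-offs in both the $y$- and $\rho$-integrations correspondingly, so that both $\|f_\epsilon\|_{{}^{c}LML}$ and the truncated integral against $\Phi$ are finite and tend to the expected limits simultaneously. All the analytic tools needed -- Minkowski's integral inequality, H\"older's inequality, polar coordinates, the standard Beta-function identity -- are already in play in the proofs of Theorems~\ref{th1} and~\ref{th2}, so no new machinery is required beyond this bookkeeping and the systematic swap $(0,r)\longleftrightarrow(r,\infty)$.
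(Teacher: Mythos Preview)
Your proposal is correct and follows essentially the same approach as the paper: the paper itself omits all details for this theorem, stating only that ``the proof methods for the above theorems are similar to the ideas of the conclusions in section~2,'' which is precisely your plan of mirroring Theorem~\ref{th2} with the systematic swap $(0,r)\longleftrightarrow(r,\infty)$ and the corresponding reversal of the extremal test functions' support.
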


\begin{theorem}
Assume that~$\Phi$~is a nonnegative locally integrable function.\\
(i) Let~$1 \leq p, p_i, \tilde{p}, \tilde{p_i} < \infty$, $1 \leq q \leq \infty$ satisfy $\frac{1}{p} = \sum\limits_{i=1}^{m} \frac{1}{p_i}$ and $\frac{1}{\tilde{p}} = \sum\limits_{i=1}^{m} \frac{1}{\tilde{p_i}}$. Assume that $\alpha\in\mathbb{R}$, $\lambda = \sum\limits_{i=1}^{m} \lambda_i$~for~$i = 1, \ldots, m$, and $\lambda$, $\lambda_i$~satisfy
\[
\lambda, \lambda_i<0 \; if \; q<\infty \quad and\quad \lambda, \lambda_i\leq0 \; if \; q<\infty.
\tag{4.1}
\]
If
$$
C_{\Phi,9} = \int_{\mathbb{R}^{n_m}} \cdots \int_{\mathbb{R}^{n_2}} \int_{\mathbb{R}^{n_1}} \Phi(\vec{u}) \prod\limits_{i=1}^{m} |u_i|^{-n+\frac{n+\alpha}{\tilde{p_i}} - \lambda_i} d\vec{u} < \infty,
$$
then
$$
\|\dot{R}_{\Phi}(\vec{f})\|_{{}^cLML_{rad}^{\tilde{p},\lambda,q}L_{ang}^{p}(\mathbb{R}^n, |x|^{\alpha})} \leq C_{\Phi,9} \prod_{i=1}^{m} \|f_i\|_{{}^cLML_{rad}^{\tilde{p_i},\lambda,{(q\tilde{p_i})}/\tilde{p}}L_{ang}^{p_i}(\mathbb{R}^n, |x|^{\alpha})}.
$$
Moreover, if
$$
\lambda p = \lambda_i p_i, \quad i = 1, \ldots, m,
$$
then
$$
\|\dot{R}_{\Phi}\|_{\prod\limits_{i=1}^{m} {}^cLML_{rad}^{\tilde{p_i},\lambda,{(q\tilde{p_i})}/\tilde{p}}L_{ang}^{p_i}(\mathbb{R}^n, |x|^{\alpha}) \to {}^cLML_{rad}^{\tilde{p},\lambda,q}L_{ang}^{p}(\mathbb{R}^n, |x|^{\alpha})} = C_{\Phi,9}.
$$
(ii) Let~$1 \leq p, p_i, q, q_i < \infty$ satisfy $\frac{1}{p} = \sum\limits_{i=1}^{m} \frac{1}{p_i}$~and~$\frac{1}{q} = \sum\limits_{i=1}^m \frac{1}{q_i}$. Assume that $\alpha\in\mathbb{R}$, $\lambda, \lambda_i < 0$, and $\lambda = \sum\limits_{i=1}^{m} \lambda_i$~for~$i = 1, \ldots, m$. If
$$
\widetilde{C_{\Phi,9}} = \int_{\mathbb{R}^{n_m}} \cdots \int_{\mathbb{R}^{n_2}} \int_{\mathbb{R}^{n_1}} \frac{\Phi(\vec{u})}{\prod\limits_{i=1}^{m} |u_i|^{n + \lambda_i -\frac{\alpha}{p_i}}} d\vec{u} < \infty,
$$
then
$$
\|\dot{R}_{\Phi}(\vec{f})\|_{{}^cLML_{rad}^{\infty,\lambda,q}L_{ang}^{p}(\mathbb{R}^n, |x|^{\alpha})} \leq \widetilde{C_{\Phi,9}} \prod\limits_{i=1}^{m} \|f_i\|_{{}^cLML_{rad}^{\infty,\lambda_i,q_i}L_{ang}^{p_i}(\mathbb{R}^n, |x|^{\alpha})}.
$$
Moreover, if
$$
\lambda q = \lambda_i q_i, \quad i = 1, \ldots, m,
$$
then
$$
\|\dot{R}_{\Phi}\|_{\prod\limits_{i=1}^{m} {}^cLML_{rad}^{\infty,\lambda_i,q_i}L_{ang}^{p_i}(\mathbb{R}^n, |x|^{\alpha}) \to {}^cLML_{rad}^{\infty,\lambda,q}L_{ang}^{p}(\mathbb{R}^n, |x|^{\alpha})} = \widetilde{C_{\Phi,9}}.
$$
\end{theorem}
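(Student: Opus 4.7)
The plan is to follow the blueprint of Theorem \ref{th3} but transported to the complementary setting, with the integration region $B(0,r)$ replaced by ${}^cB(0,r)$ and the sign convention $\lambda,\lambda_i < 0$ replacing $\lambda,\lambda_i > 0$. For the upper bound in part (i), I would apply Minkowski's integral inequality to bring the $\Phi$-integral outside the angular $L^p$-norm and radial $L^{\tilde p}$-norm, then use H\"older's inequality with $1/p = \sum 1/p_i$ and $1/\tilde p = \sum 1/\tilde{p_i}$ to split the product $\prod f_i(x/|u_i|)$. The key substitution is $y_i = \rho/|u_i|$, which transforms $\{\rho > r\}$ into $\{y_i > r/|u_i|\}$, reproducing the complementary local Morrey structure at scale $r/|u_i|$. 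Extracting $(r/|u_i|)^{\lambda_i}$ and using $\lambda = \sum \lambda_i$ makes the $r$-dependence collapse, leaving the weight $\prod_i |u_i|^{-n + (n+\alpha)/\tilde{p_i} - \lambda_i}$ paired against $\Phi(\vec u)$; this is exactly $C_{\Phi,9}$. For $1 \leq q < \infty$ one inserts a second Minkowski inequality in the $L^q(dr/r)$ layer, followed by H\"older in the $q$-exponents with weights $\tilde{p_i}/\tilde p$, just as in the proof of Theorem \ref{th3}; the case $q = \infty$ requires only a sup over $r$.

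For the sharpness of $C_{\Phi,9}$, the extremizer has to be supported on a \emph{bounded} set rather than on $\{|x|>1\}$, because $\lambda_i < 0$ forces the complementary norm to converge through decay at the edge of the support rather than at infinity. For $1 \leq q < \infty$ I would take
$$
f_i(x) = |x|^{\beta_i}\chi_{\{|x| < 1\}}(x), \qquad i = 1,\ldots,m,
$$
with $\beta_i$ (depending on $\epsilon > 0$) chosen so that $\lambda_i < \beta_i + (n+\alpha)/\tilde{p_i} < 0$. A direct Beta-function computation evaluates $\|f_i\|_{{}^{c}LML_{rad}^{\tilde{p_i},\lambda_i,(q\tilde{p_i})/\tilde p}L_{ang}^{p_i}}$ explicitly. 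To get a matching lower bound on $\|\dot{R}_\Phi(\vec f)\|$, I restrict the $\vec u$ integration to the region $\{|u_i| > \epsilon^{-1}\}$ and the $\rho$ integration to $\{\rho < \epsilon\}$, which ensures $\rho/|u_i| < 1$ lies in the support of every $f_i$; this factors the double integral into a pure $\Phi$-integral and a pure $\rho$-integral, and the latter is again computed via a Beta function. Setting $\beta_i = \lambda_i - (n+\alpha)/\tilde{p_i} + \epsilon/\tilde{p_i}$ and invoking the alignment condition $\lambda p = \lambda_i p_i$ together with $1/\tilde p = \sum 1/\tilde{p_i}$ collapses all the Beta factors, yielding
$$
\frac{\|\dot{R}_\Phi(\vec f)\|_{{}^{c}LML_{rad}^{\tilde p,\lambda,q}L_{ang}^{p}}}{\prod_i \|f_i\|_{{}^{c}LML_{rad}^{\tilde{p_i},\lambda,(q\tilde{p_i})/\tilde p}L_{ang}^{p_i}}} \geq \epsilon^{\epsilon/p} \int_{\{|u_i|>\epsilon^{-1}\}} \Phi(\vec u) \prod_{i=1}^m |u_i|^{-n + (n+\alpha + \epsilon)/\tilde{p_i} - \lambda_i}\, d\vec u,
$$
and $\epsilon \to 0^+$ delivers $C_{\Phi,9}$.

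For part (ii), the radial $L^{\tilde p_i}$-norm is replaced by the essential supremum over $\rho > r$, which interacts cleanly with the substitution $y_i = \rho/|u_i|$: on $\{\rho > r\}$ one has $\operatorname{ess\,sup}_{\rho > r} = \operatorname{ess\,sup}_{y_i > r/|u_i|}$, with a Jacobian weight $|u_i|^{\alpha/p_i}$ from the angular power-weight. Proceeding exactly as in Theorem \ref{th3}(ii) with H\"older at the level $1/q = \sum 1/q_i$ yields the upper bound $\widetilde{C_{\Phi,9}}$. For sharpness I would take $f_i(x) = |x|^{\beta_i}\chi_{\{|x|<1\}}(x)$ with $\beta_i = \lambda_i - \alpha/p_i + \epsilon/q_i$; the alignment $\lambda q = \lambda_i q_i$ then produces matching $\epsilon$-factors, and letting $\epsilon \to 0^+$ gives equality.

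The main obstacle is sign bookkeeping at the sharpness step. Under (4.1) with $\lambda_i < 0$, the admissible window for $\beta_i$ lies \emph{below} zero rather than above, which changes the orientation of every convergence check; one must verify that the Beta-function arguments remain positive, that the $\rho$-integral restricted to $\{\rho < \epsilon\}$ produces a factor $\epsilon^{\epsilon/p}$ (and not $\epsilon^{-\epsilon/p}$), and that the residual $|u_i|$-exponent $-n + (n+\alpha+\epsilon)/\tilde{p_i} - \lambda_i$ agrees with the exponent appearing in $C_{\Phi,9}$ after passage to the limit. Once these exponent computations are verified, the remainder of the argument is a routine adaptation of the proof of Theorem \ref{th3}.
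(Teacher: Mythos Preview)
Your proposal is correct and follows exactly the approach the paper intends: the paper itself gives no detailed proof of this theorem, stating only that ``the proof methods for the above theorems are similar to the ideas of the conclusions in section 2'' (i.e., to Theorem~\ref{th3}), which is precisely the transplant you describe. One small slip to flag in your sharpness step: the restriction $\{|u_i| > \epsilon^{-1}\}$ shrinks to the empty set as $\epsilon \to 0^{+}$, so your displayed lower bound would tend to $0$ rather than to $C_{\Phi,9}$; the correct complementary analogue of the paper's $\{|u_i| < \epsilon^{-1}\}$ is $\{|u_i| > \epsilon\}$ (paired with the $\rho$-restriction $r<\rho<\epsilon$), which expands to all of $\mathbb{R}^n\setminus\{0\}$ in the limit and makes the lower bound converge to $C_{\Phi,9}$ as intended.
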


\begin{theorem}
Assume that~$\Phi$~is a nonnegative, locally integrable, radial function.
(i) Let~$1 \leq p, p_i, \tilde{p}, \tilde{p_i} < \infty$, $1 \leq q \leq \infty$ satisfy $\frac{1}{p} = \sum\limits_{i=1}^{m} \frac{1}{p_i}$ and $\frac{1}{\tilde{p}} = \sum\limits_{i=1}^{m} \frac{1}{\tilde{p_i}}$. Assume that $\alpha\in\mathbb{R}$, $\lambda = \sum\limits_{i=1}^{m} \lambda_i$~for~$i = 1, \ldots, m$, and $\lambda$, $\lambda_i$~satisfy (4.1). If
$$
C_{\Phi,10} = \omega_n^m \int_0^\infty \cdots \int_0^\infty \frac{\Phi(\vec{t})}{\prod\limits_{i=1}^m t_i^{\lambda_i - \frac{n+\alpha}{\tilde{p_i}} + 1}} d\vec{t} < \infty,
$$
then
$$
\|\widetilde{\dot{R}_\Phi}(\vec{f})\|_{{}^cLML_{rad}^{\tilde{p},\lambda,q}L_{ang}^{p}(\mathbb{R}^n, |x|^{\alpha})} \leq C_{\Phi,10} \prod_{i=1}^m \|f_i\|_{{}^cLML_{rad}^{\tilde{p_i},\lambda,{(q\tilde{p_i})}/\tilde{p}}L_{ang}^{p_i}(\mathbb{R}^n, |x|^{\alpha})}.
$$
Moreover, if
$$
\lambda p = \lambda_i p_i, \quad i = 1, \ldots, m,
$$
then
$$
\|\widetilde{\dot{R}_\Phi}\|_{\prod\limits_{i=1}^m {}^cLML_{rad}^{\tilde{p_i},\lambda,{(q\tilde{p_i})}/\tilde{p}}L_{ang}^{p_i}(\mathbb{R}^n, |x|^{\alpha}) \to {}^cLML_{rad}^{\tilde{p},\lambda,q}L_{ang}^{p}(\mathbb{R}^n, |x|^{\alpha})} = C_{\Phi,10}.
$$
(ii) Let~$1 \leq p, p_i, q, q_i < \infty$ satisfy $\frac{1}{p} = \sum\limits_{i=1}^{m} \frac{1}{p_i}$~and~$\frac{1}{q} = \sum\limits_{i=1}^m \frac{1}{q_i}$. Assume that $\alpha\in\mathbb{R}$, $\lambda, \lambda_i < 0$, and $\lambda = \sum\limits_{i=1}^{m} \lambda_i$~for~$i = 1, \ldots, m$. If
$$
\widetilde{C_{\Phi,10}} = \omega_n^m \int_0^\infty \cdots \int_0^\infty \frac{\Phi(\vec{t})}{\prod\limits_{i=1}^m t_i^{\lambda_i -\frac{\alpha}{p_i} + 1}} d\vec{t} < \infty,
$$
then
$$
\|\widetilde{\dot{R}_\Phi}(\vec{f})\|_{{}^cLML_{rad}^{\infty,\lambda,q}L_{ang}^{p}(\mathbb{R}^n, |x|^{\alpha})} \leq \widetilde{C_{\Phi,10}} \prod_{i=1}^m \|f_i\|_{{}^cLML_{rad}^{\infty,\lambda_i,q_i}L_{ang}^{p_i}(\mathbb{R}^n, |x|^{\alpha})}.
$$
Moreover, if
$$
\lambda q = \lambda_i q_i, \quad i = 1, \ldots, m,
$$
then
$$
\|\widetilde{\dot{R}_\Phi}\|_{\prod\limits_{i=1}^m {}^cLML_{rad}^{\infty,\lambda_i,q_i}L_{ang}^{p_i}(\mathbb{R}^n, |x|^{\alpha}) \to {}^cLML_{rad}^{\infty,\lambda,q}L_{ang}^{p}(\mathbb{R}^n, |x|^{\alpha})} = \widetilde{C_{\Phi,10}}.
$$
\end{theorem}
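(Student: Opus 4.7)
The plan is to mirror the proof of Theorem \ref{th4}, substituting ${}^{c}B(0,r)=\mathbb{R}^n\setminus B(0,r)$ for $B(0,r)$ throughout. First, I would prove a complementary analogue of Lemma \ref{th8}: spherical-averaging each $f_i$ to obtain $g_{f_i}$ leaves $\widetilde{\dot R_\Phi}(\vec f)$ unchanged (because $\Phi$ is radial) and does not increase the complementary norms $\|f_i\|_{{}^{c}LML_{rad}^{\tilde{p_i},\lambda_i,q_i}L_{ang}^{p_i}(\mathbb{R}^n,|x|^\alpha)}$. The verification is exactly that of Lemmas \ref{th7}--\ref{th8}, with every occurrence of $\int_0^r$ replaced by $\int_r^\infty$. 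This reduces matters to the case where each $f_i$ is radial.

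Second, with $\Phi$ radial and each $u_i\in\mathbb{R}^n$, passing to polar coordinates as at the start of the proof of Theorem \ref{th4} (applied uniformly with $n_i=n$) yields
\[
\widetilde{\dot R_\Phi}(\vec f)(x)=\omega_n^{m}\int_0^\infty\!\!\cdots\!\!\int_0^\infty \prod_{i=1}^{m} f_i\!\left(\tfrac{|x|}{t_i}\right)\frac{\Phi(t_1,\ldots,t_m)}{\prod_{i=1}^{m} t_i}\,d\vec t.
\]
Inserting this into $\|\cdot\|_{{}^{c}LML_{rad}^{\tilde{p},\lambda,q}L_{ang}^{p}(\mathbb{R}^n,|x|^\alpha)}$, applying Minkowski's integral inequality (to push the outer norm inside the $d\vec t$-integral, and again in the $r$-integral when $q<\infty$) and then H\"older with $\tfrac{1}{p}=\sum\tfrac{1}{p_i}$, $\tfrac{1}{\tilde{p}}=\sum\tfrac{1}{\tilde{p_i}}$ in part (i) (respectively $\tfrac{1}{q}=\sum\tfrac{1}{q_i}$ in part (ii)), reduces the problem to estimating rescaled complementary norms of $f_i(\cdot/t_i)$. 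A substitution $\rho\mapsto \rho/t_i$ in the $i$-th factor produces the weight $t_i^{(n+\alpha)/\tilde{p_i}-\lambda_i}$ (resp.\ $t_i^{\alpha/p_i-\lambda_i}$), and recombining against $\Phi(\vec t)/\prod t_i$ exactly matches $C_{\Phi,10}$ (resp.\ $\widetilde{C_{\Phi,10}}$).

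For sharpness, the extremal family must live inside $\{|x|<1\}$ rather than outside, since the complementary norm sees the tail at infinity and $\lambda\le 0$. For part (i) with $1\le q<\infty$, the natural choice is $f_i(x)=|x|^{\beta_i}\chi_{\{|x|<1\}}(x)$ with $\beta_i=\lambda_i-\tfrac{n+\alpha}{\tilde{p_i}}+\tfrac{\epsilon}{\tilde{p_i}}$, so that the homogeneity $\lambda p=\lambda_i p_i$ yields $\sum\beta_i=\lambda-\tfrac{n+\alpha}{\tilde{p}}+\tfrac{\epsilon}{p}$. A Beta-function computation paralleling Theorem \ref{th3}(i) verifies that each $f_i$ belongs to ${}^{c}LML_{rad}^{\tilde{p_i},\lambda_i,(q\tilde{p_i})/\tilde{p}}L_{ang}^{p_i}(\mathbb{R}^n,|x|^\alpha)$; meanwhile, on $\{|x|<1\}$ the operator evaluates to a constant times $|x|^{\sum\beta_i}\int_{\{t_i>|x|,\,\forall i\}}\Phi(\vec t)\prod t_i^{-\beta_i-1}\,d\vec t$, which after truncating the $t_i$-integrals to $\{\epsilon|x|<t_i<\epsilon^{-1}\}$ and letting $\epsilon\to 0^{+}$ recovers $C_{\Phi,10}$. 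The case $q=\infty$ and part (ii) (with $\beta_i=\lambda_i-\tfrac{\alpha}{p_i}-\tfrac{\epsilon}{q_i}$) are treated analogously, mirroring Theorems \ref{th3}(ii) and \ref{th4}(ii).

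The main obstacle lies in the sharpness half. The switch of support from $\{|x|>1\}$ to $\{|x|<1\}$ reverses the monotonicity of several $r$-integrals and forces a two-sided (rather than one-sided) truncation of the $t_i$-windows; ensuring that the resulting $\epsilon^{c\epsilon}$-type correction tends to $1$ (rather than $0$) requires carefully matching the window width with the rescaling weight $t_i^{(n+\alpha)/\tilde{p_i}-\lambda_i}$, which is precisely the step where the homogeneity relation $\lambda p=\lambda_i p_i$ (respectively $\lambda q=\lambda_i q_i$) becomes essential.
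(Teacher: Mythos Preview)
Your proposal is correct and follows exactly the approach the paper itself indicates (the paper omits the proof, stating only that ``the proof methods for the above theorems are similar to the ideas of the conclusions in section 2''). The only minor slip is that the $t_i$-truncation in the sharpness step is one-sided ($t_i>\epsilon$ after restricting $\rho<\epsilon$), not two-sided as you suggest; but this does not affect the argument.
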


\begin{theorem}
Assume that~$\Phi$~is a nonnegative locally integrable function.\\
(i) Let~$1 \leq p,\tilde{p}, p_i, \tilde{p_i} < \infty$, $1 \leq q \leq \infty$ satisfy $\frac{1}{p} = \sum\limits_{i=1}^{m} \frac{1}{p_i}$ and $\frac{1}{\tilde{p}} = \sum\limits_{i=1}^{m} \frac{1}{\tilde{p_i}}$. Assume that $\alpha\in\mathbb{R}$, $\lambda = \sum\limits_{i=1}^{m} \lambda_i$~for~$i = 1, \ldots, m$, and $\lambda$, $\lambda_i$~satisfy (4.1). If
$$
C_{\Phi,11} = \int_{\mathbb{R}^{nm}} \frac{\Phi(\vec{u})}{|\vec{u}|^{nm}} \prod\limits_{i=1}^m |u_i|^{\frac{n+\alpha}{\tilde{p_i}} - \lambda_i} d\vec{u} < \infty,
$$
then
$$
\|\dot{S}_\Phi(\vec{f})\|_{{}^cLML_{rad}^{\tilde{p},\lambda,q}L_{ang}^{p}(\mathbb{R}^n, |x|^{\alpha})} \leq C_{\Phi,11} \prod_{i=1}^m \|f_i\|_{{}^cLML_{rad}^{\tilde{p_i},\lambda,{(q\tilde{p_i})}/\tilde{p}}L_{ang}^{p_i}(\mathbb{R}^n, |x|^{\alpha})}.
$$
Moreover, if
$$
\lambda p = \lambda_i p_i, \quad i = 1, \ldots, m,
$$
then
$$
\|\dot{S}_\Phi\|_{\prod\limits_{i=1}^m {}^cLML_{rad}^{\tilde{p_i},\lambda,{(q\tilde{p_i})}/\tilde{p}}L_{ang}^{p_i}(\mathbb{R}^n, |x|^{\alpha}) \to {}^cLML_{rad}^{\tilde{p},\lambda,q}L_{ang}^{p}(\mathbb{R}^n, |x|^{\alpha})} = C_{\Phi,11}.
$$
(ii) Let $1 \leq p, p_i, q, q_i < \infty$ satisfy $\frac{1}{p} = \sum\limits_{i=1}^{m} \frac{1}{p_i}$~and~$\frac{1}{q} = \sum\limits_{i=1}^m \frac{1}{q_i}$. Assume that $\alpha\in\mathbb{R}$, $\lambda, \lambda_i < 0$, and $\lambda = \sum\limits_{i=1}^{m} \lambda_i$~for~$i = 1, \ldots, m$. If
$$
\widetilde{C_{\Phi,11}} = \int_{\mathbb{R}^{nm}} \frac{\Phi(\vec{u})}{|\vec{u}|^{nm}} \prod_{i=1}^m |u_i|^{\frac{\alpha}{p_i}-\lambda_i} d\vec{u} < \infty,
$$
then
$$
\|\dot{S}_\Phi(\vec{f})\|_{{}^cLML_{rad}^{\infty,\lambda,q}L_{ang}^{p}(\mathbb{R}^n, |x|^{\alpha})} \leq \widetilde{C_{\Phi,11}} \prod_{i=1}^m \|f_i\|_{{}^cLML_{rad}^{\infty,\lambda_i,q_i}L_{ang}^{p_i}(\mathbb{R}^n, |x|^{\alpha})}.
$$
Moreover, if
$$
\lambda q = \lambda_i q_i, \quad i = 1, \ldots, m,
$$
then
$$
\|\dot{S}_\Phi\|_{\prod\limits_{i=1}^m {}^cLML_{rad}^{\infty,\lambda_i,q_i}L_{ang}^{p_i}(\mathbb{R}^n, |x|^{\alpha}) \to {}^cLML_{rad}^{\infty,\lambda,q}L_{ang}^{p}(\mathbb{R}^n, |x|^{\alpha})} = \widetilde{C_{\Phi,11}}.
$$
\end{theorem}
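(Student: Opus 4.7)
The plan is to adapt the argument of Theorem \ref{th5} to the complementary setting. The only essential modifications are that the radial integration $\int_0^r(\cdots)d\rho$ in the norm is replaced by $\int_r^\infty(\cdots)d\rho$, and that the parameters $\lambda,\lambda_i$ now satisfy (4.1) instead of (1.3). The combinatorics of the exponents is unchanged because the two substitutions involved are insensitive to the sign of $\lambda_i$.

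For the upper bound in part (i), I would begin from
$$
\Bigl(\int_r^\infty\Bigl(\int_{\mathbb{S}^{n-1}}|\dot{S}_\Phi(\vec{f})(\rho\theta)|^p\,d\sigma(\theta)\Bigr)^{\tilde{p}/p}\rho^{n+\alpha-1}\,d\rho\Bigr)^{1/\tilde{p}},
$$
apply Minkowski's integral inequality to pull $\int_{\mathbb{R}^{nm}}\Phi(\vec{u})|\vec{u}|^{-nm}d\vec{u}$ outside the $L^{\tilde p}_{\rm rad}L^p_{\rm ang}$-norm, and then invoke H\"{o}lder's inequality with the splits $1/p=\sum 1/p_i$, $1/\tilde{p}=\sum 1/\tilde{p_i}$ to factor over $i$. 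In each factor the substitution $y_i=\rho/|u_i|$ turns $\int_r^\infty(\cdots)\rho^{n+\alpha-1}d\rho$ into $|u_i|^{n+\alpha}\int_{r/|u_i|}^\infty(\cdots)y_i^{n+\alpha-1}dy_i$, producing the weight $|u_i|^{(n+\alpha)/\tilde{p_i}}$. For $1\le q<\infty$, a second application of Minkowski on the $r$-integration followed by $\tilde r=r/|u_i|$ supplies the weight $|u_i|^{-\lambda_i}$, yielding the bound with constant $C_{\Phi,11}$; the case $q=\infty$ is obtained by replacing this Minkowski step with a supremum, and the cases $p=\infty$ or $\tilde{p}=\infty$ require only cosmetic changes. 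Part (ii) is handled along the same lines, with the essential supremum over $r<\rho<\infty$ replacing the $\tilde p$-radial integration.

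For the sharpness under the constraint $\lambda p=\lambda_i p_i$, the test family is dual to the one used in Theorem \ref{th5}: for small $\epsilon>0$, take
$$
f_i(x)=|x|^{\beta_i}\chi_{\{|x|<1\}}(x),\qquad \beta_i=\lambda_i-\tfrac{n+\alpha}{\tilde{p_i}}+\tfrac{\epsilon}{\tilde{p_i}},\quad i=1,\ldots,m,
$$
so that $\sum_i\beta_i\tilde p+n+\alpha=\lambda\tilde p+\epsilon$, which is negative for $\lambda<0$ and $\epsilon$ sufficiently small. A direct Beta-function calculation parallel to that for $f_0$ in Theorem \ref{th1} evaluates each $\|f_i\|_{{}^cLML_{rad}^{\tilde{p_i},\lambda,(q\tilde{p_i})/\tilde{p}}L_{ang}^{p_i}(\mathbb{R}^n,|x|^\alpha)}$ in closed form, while direct substitution gives
$$
\dot{S}_\Phi(\vec{f})(x)=|x|^{\sum_i\beta_i}\int_{\bigcap_i\{|u_i|>|x|\}}\frac{\Phi(\vec{u})}{|\vec{u}|^{nm}}\prod_i|u_i|^{-\beta_i}\,d\vec{u}.
$$
Restricting to $|x|<\epsilon$ truncates the inner integration to $\bigcap_i\{|u_i|>\epsilon^{-1}\}$; assembling the quotient $\|\dot S_\Phi(\vec{f})\|/\prod_i\|f_i\|$, isolating the $\epsilon^{\epsilon/\tilde p}$ factor, and letting $\epsilon\to 0^+$ recovers $C_{\Phi,11}$ via monotone convergence. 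In part (ii) the same scheme is used with $\beta_i=\lambda_i-\alpha/p_i+\epsilon/q_i$ under $\lambda q=\lambda_i q_i$. The main obstacle is the careful bookkeeping of the exponents generated by the two nested changes of variable, together with verifying that the parameters $\beta_i$ lie in the narrow admissible window which keeps the test functions in the correct space while simultaneously driving the quotient to $C_{\Phi,11}$.
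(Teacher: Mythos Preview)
Your approach matches the paper's (which omits details, referring back to the Section~2 arguments for $S_\Phi$, themselves modeled on Theorem~\ref{th3}). One slip in the sharpness step: restricting to $|x|<\epsilon$ gives the inclusion $\{|u_i|>|x|\}\supset\{|u_i|>\epsilon\}$, so the truncated $\vec{u}$-domain for the lower bound is $\bigcap_i\{|u_i|>\epsilon\}$, not $\bigcap_i\{|u_i|>\epsilon^{-1}\}$; with your version the truncated integral would shrink to zero as $\epsilon\to 0^+$ rather than recover $C_{\Phi,11}$ by monotone convergence.
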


\begin{theorem}
Assume that~$\Phi$ is a nonnegative, locally integrable, radial function.
(i) Let~$1 \leq p,\tilde{p}, p_i, \tilde{p_i} < \infty$, $1 \leq q \leq \infty$ satisfy $\frac{1}{p} = \sum\limits_{i=1}^{m} \frac{1}{p_i}$ and $\frac{1}{\tilde{p}} = \sum\limits_{i=1}^{m} \frac{1}{\tilde{p_i}}$. Assume that $\alpha\in\mathbb{R}$, $\lambda = \sum\limits_{i=1}^{m} \lambda_i$~for~$i = 1, \ldots, m$, and $\lambda$, $\lambda_i$~satisfy (4.1). If
$$
C_{\Phi,12} = \omega_n^m \int_0^\infty \cdots \int_0^\infty \Phi \left( \frac{\prod\limits_{i=1}^m t_i}{\sqrt{\sum\limits_{i=1}^m t_i^2}} \right) \prod_{i=1}^m \frac{t_i^{ n(m-1) + \frac{n+\alpha}{\tilde{p_i}}-\lambda_i - 1}}{(\sum\limits_{i=1}^m t_i^2)^{\frac{nm}{2}}} d\vec{t} < \infty,
$$
then
$$
\|\widetilde{\dot{S}_\Phi}(\vec{f})\|_{{}^cLML_{rad}^{\tilde{p},\lambda,q}L_{ang}^{p}(\mathbb{R}^n, |x|^{\alpha})} \leq C_{\Phi,12} \prod_{i=1}^m \|f_i\|_{{}^cLML_{rad}^{\tilde{p_i},\lambda,{(q\tilde{p_i})}/\tilde{p}}L_{ang}^{p_i}(\mathbb{R}^n, |x|^{\alpha})}.
$$
Moreover, if
$$
\lambda p = \lambda_i p_i, \quad i = 1, \ldots, m,
$$
then
$$
\|\widetilde{\dot{S}_\Phi}\|_{\prod\limits_{i=1}^m {}^cLML_{rad}^{\tilde{p_i},\lambda,{(q\tilde{p_i})}/\tilde{p}}L_{ang}^{p_i}(\mathbb{R}^n, |x|^{\alpha}) \to {}^cLML_{rad}^{\tilde{p},\lambda,q}L_{ang}^{p}(\mathbb{R}^n, |x|^{\alpha})} = C_{\Phi,12}.
$$
(ii) Let~$1 \leq p, p_i, q, q_i < \infty$ satisfy $\frac{1}{p} = \sum\limits_{i=1}^{m} \frac{1}{p_i}$~and~$\frac{1}{q} = \sum\limits_{i=1}^m \frac{1}{q_i}$. Assume that $\alpha\in\mathbb{R}$, $\lambda, \lambda_i < 0$, and $\lambda = \sum\limits_{i=1}^{m} \lambda_i$~for~$i = 1, \ldots, m$. If
$$
\widetilde{C_{\Phi,12}} = \omega_{n}^m \int_0^\infty \cdots \int_0^\infty \Phi \left( \frac{\prod\limits_{i=1}^m t_i}{\sqrt{\sum\limits_{i=1}^m t_i^2}} \right) \prod_{i=1}^m \frac{t_i^{n(m-1) + \frac{\alpha}{p_i}-\lambda_i - 1}}{(\sum\limits_{i=1}^m t_i^2)^{\frac{nm}{2}}} d\vec{t} < \infty,
$$
then
$$
\|\widetilde{\dot{S}_\Phi}(\vec{f})\|_{{}^cLML_{rad}^{\infty,\lambda,q}L_{ang}^{p}(\mathbb{R}^n, |x|^{\alpha})} \leq \widetilde{C_{\Phi,12}} \prod_{i=1}^m \|f_i\|_{{}^cLML_{rad}^{\infty,\lambda_i,q_i}L_{ang}^{p_i}(\mathbb{R}^n, |x|^{\alpha})}.
$$
Moreover, if
$$
\lambda q = \lambda_i q_i, \quad i = 1, \ldots, m,
$$
then
$$
\|\widetilde{\dot{S}_\Phi}\|_{\prod\limits_{i=1}^m {}^cLML_{rad}^{\infty,\lambda_i,q_i}L_{ang}^{p_i}(\mathbb{R}^n, |x|^{\alpha}) \to {}^cLML_{rad}^{\infty,\lambda,q}L_{ang}^{p}(\mathbb{R}^n, |x|^{\alpha})} = \widetilde{C_{\Phi,12}}.
$$
\end{theorem}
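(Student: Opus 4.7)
The plan is to parallel the argument used in the proof of Theorem \ref{th6}, with the roles of the ball $B(0,r)$ and its complement ${}^{c}B(0,r) = \mathbb{R}^n \setminus B(0,r)$ interchanged throughout. First, I would establish a reduction-to-radial lemma directly analogous to Lemma \ref{th8} but stated for ${}^{c}LML_{rad}^{\tilde{p},\lambda,q}L_{ang}^{p}(\mathbb{R}^n,|x|^\alpha)$: the spherical averaging
$$g_{f_i}(y_i) = \omega_{n_i}^{-1}\int_{\mathbb{S}^{n_i-1}} f_i(|y_i|\xi_i)\,d\sigma(\xi_i)$$
commutes with the radial operator $\widetilde{\dot{S}_{\Phi}}$ in the sense that $\widetilde{\dot{S}_{\Phi}}(\vec{g_f}) = \widetilde{\dot{S}_{\Phi}}(\vec{f})$, while Minkowski and H\"older show its action does not increase the ${}^{c}LML$-norm. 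This lets me assume $f_1,\dots,f_m$ are radial, and then express $\widetilde{\dot{S}_{\Phi}}(\vec{f})(x)$ in polar coordinates as
$$\prod_{i=1}^m\omega_{n_i}\int_0^\infty\!\!\cdots\!\int_0^\infty\!\Phi\!\left(\frac{\prod_i t_i}{\sqrt{\sum_i t_i^2}}\right)\frac{\prod_i t_i^{n_i-1}}{(\sum_i t_i^2)^{nm/2}}\prod_i f_i\!\left(\frac{|x|}{t_i}\right)d\vec{t},$$
exactly as in the proof of Theorem \ref{th6}.

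For the upper bound I would apply Minkowski's integral inequality to pull the integral over $\vec{t}$ outside the $L^p_{\mathrm{ang}}$-norm and the outer radial integral, then H\"older's inequality with $\frac{1}{p} = \sum 1/p_i$ and $\frac{1}{\tilde{p}} = \sum 1/\tilde{p_i}$ (resp.\ $\frac{1}{q}=\sum 1/q_i$ in part (ii)) to factor the product. The substitution $y_i = \rho/t_i$ converts the truncation $\int_r^\infty\cdots d\rho$ into $\int_{r/t_i}^\infty\cdots dy_i$, producing the kernel power $t_i^{(n+\alpha)/\tilde{p_i}}$ (resp.\ $t_i^{\alpha/p_i}$). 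Combined with the factor $r^{-\lambda_i}$ absorbed via the scale change $r\mapsto r/t_i$, this yields $\prod_i t_i^{(n+\alpha)/\tilde{p_i}-\lambda_i}$ (resp.\ $\prod_i t_i^{\alpha/p_i-\lambda_i}$) multiplying the product of complementary Morrey norms, and integrating against the radial form of $\Phi$ recovers exactly $C_{\Phi,12}$ (resp.\ $\widetilde{C_{\Phi,12}}$).

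For the lower bound I would test on $f_i(x) = |x|^{\beta_i}\chi_{\{0<|x|<1\}}(x)$, with $\beta_i$ depending on a small $\epsilon > 0$ so that $\beta_i + (n+\alpha)/\tilde{p_i} = \lambda_i + \epsilon/\tilde{p_i}$ in (i) (resp.\ $\beta_i + \alpha/p_i = \lambda_i + \epsilon/q_i$ in (ii)); the sign constraints required to keep $f_i \in {}^{c}LML$ are exactly furnished by (4.1). Evaluating $\widetilde{\dot{S}_{\Phi}}(\vec{f})(x)$ on these test functions produces a multiple of $|x|^{\sum\beta_i}$ times a truncated kernel integral; restricting both the outer integral and the inner kernel integral to the region bounded by $\epsilon$ (here $\epsilon^{-1}$ in the previous direction is replaced by $\epsilon$) permits lower bounds on each factor. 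The conditions $\lambda p = \lambda_i p_i$ (resp.\ $\lambda q = \lambda_i q_i$) then align $\sum\beta_i$ with $\lambda - (n+\alpha)/\tilde{p} - \epsilon/p$ (resp.\ $\lambda - \alpha/p - \epsilon/q$), and the $\epsilon$-dependent prefactors collapse to an expression of the form $\epsilon^{c\epsilon}$ whose limit as $\epsilon\to 0^+$ is $1$.

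The main obstacle I anticipate is the bookkeeping in the sharpness step. The complementary norms impose a decay condition at infinity rather than a growth condition, so the admissible range of $\beta_i$ and the associated Beta-function identity are mirror images of the ones in Theorem \ref{th6}: the integral $\int_1^\infty t^{-q\lambda-1}(t^{(\cdots)}-1)^{q/\tilde p}\,dt$ must be replaced by its $\int_0^1 t^{-q\lambda-1}(1 - t^{(\cdots)})^{q/\tilde p}\,dt$ analogue, and one has to verify carefully that the resulting Beta-function factors cancel correctly between $\|\widetilde{\dot{S}_{\Phi}}(\vec{f})\|$ and $\prod_i\|f_i\|$. Once this algebraic match is in place, taking $\epsilon\to 0^+$ gives $\|\widetilde{\dot{S}_{\Phi}}\|\geq C_{\Phi,12}$ (resp.\ $\widetilde{C_{\Phi,12}}$) and closes the proof.
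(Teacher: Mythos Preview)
Your proposal is correct and follows essentially the same approach as the paper. The paper itself does not give a separate proof for this theorem; it merely states that ``the proof methods for the above theorems are similar to the ideas of the conclusions in section 2'' and omits the details, so your plan of mirroring the proof of Theorem~\ref{th6} (via the complementary analogue of Lemma~\ref{th8}, the polar-coordinate representation, Minkowski/H\"older for the upper bound, and power-function test data $|x|^{\beta_i}\chi_{\{0<|x|<1\}}$ for the lower bound) is exactly what the authors have in mind.
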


The proof methods for the above theorems are similar to the ideas of the conclusions in section 2. For the sake of simplicity, we will omit the details. In addition,
We also can give the related applications by imitating that of examples in sections 3. Due to the similarity of the conclusions, we will not repeat them in this position.

{\bf Acknowledgements} The authors cordially thank the anonymous referees who gave valuable
suggestions and useful comments which have led to the improvement of this paper. This work is supported by the National Natural Science Foundation of China (No. 12501128).

{\bf Declaration}

{\bf Conflict of interest} The authors declare that they have no conflict of interest.

\end{document}